\definecolor{blue(pigment)}{rgb}{0.2, 0.2, 0.6}
\definecolor{ultramarine}{rgb}{0.07, 0.04, 0.56}
\definecolor{darkspringgreen}{rgb}{0.09, 0.45, 0.27}
\definecolor{hookersgreen}{rgb}{0.0, 0.44, 0.0}
\definecolor{plum(traditional)}{rgb}{0.56, 0.27, 0.52}
\definecolor{purple(html/css)}{rgb}{0.5, 0.0, 0.5}
\definecolor{magenta(dye)}{rgb}{0.79, 0.08, 0.48}
\numberwithin{equation}{section}
\newtheorem{theorem}{Theorem}[section]
\newtheorem{lemma}[theorem]{Lemma}
\newtheorem{definition}[theorem]{Definition}
\newtheorem{corollary}[theorem]{Corollary}
\theoremstyle{remark}
\newtheorem*{remark}{Remark}
\newcommand{\R}{\mathbb{R}}
\newcommand{\C}{\mathbb{C}}
\newcommand{\J}{\mathbb{J}}
\newcommand{\K}{\mathbb{K}}
\newcommand{\T}{\mathbb{T}}
\newcommand{\Cond}{{\bf (C0)}}
\newcommand{\CondTwo}{{\bf (C1)}}
\newcommand{\CondThree}{{\bf (CG)}}
\newcommand{\X}{{\bf X}}
\newcommand{\U}{{\bf U}}
\newcommand{\A}{{\bf A}}
\newcommand{\B}{{\bf B}}
\newcommand{\EE}{{\bf E}}
\newcommand{\G}{{\bf G}}
\newcommand{\HH}{{\bf H}}
\newcommand{\I}{{\bf I}}
\newcommand{\RR}{{\bf R}}
\newcommand{\RS}{{\bf S}}
\newcommand{\RT}{{\bf T}}
\newcommand{\V}{{\bf V}}
\newcommand{\y}{{\bf y}}
\newcommand{\LL}{{\bf L}}
\newcommand{\W}{{\bf W}}
\newcommand{\ee}{{\bf e}}
\newcommand{\uu}{{\bf u}}
\newcommand{\ve}{\varepsilon}
\newcommand{\tTj}{\widetilde T_{n}^{(j)}}
\DeclareMathOperator{\Tr}{Tr}
\DeclareMathOperator{\Rank}{Rank}
\DeclareMathOperator{\E}{\mathbb{E}}
\DeclareMathOperator{\Pb}{\mathbb{P}}
\DeclareMathOperator{\one}{\mathds{1}}
\DeclareMathOperator{\re}{Re}
\DeclareMathOperator{\imag}{Im}
\begin{document}

\vspace{1in}

\title[Local semicircle law]{\bf Local semicircle law under fourth moment condition}

\author[F. G{\"o}tze]{F. G{\"o}tze}
\address{Friedrich G{\"o}tze\\
 Faculty of Mathematics\\
 Bielefeld University \\
 Bielefeld, Germany
}
\email{goetze@math.uni-bielefeld.de}

\author[A. Naumov]{A. Naumov}
\address{Alexey A. Naumov\\
 National Research University Higher School of Economics, Moscow, Russia; and IITP RAS, Moscow, Russia
 }
\email{anaumov@hse.ru}

\author[A. Tikhomirov]{A. Tikhomirov}
\address{Alexander N. Tikhomirov\\
 Department of Mathematics\\
 Komi Science Center of Ural Division of RAS \\
 Syktyvkar, Russia; and National Research University Higher School of Economics, Moscow, Russia
 }
\email{tikhomirov@dm.komisc.ru}


\keywords{Wigner's random matrices, local semicircle law, Stieltjes transform, Stein's method, rigidity, delocalization, empirical spectral distribution}

\date{\today}

\begin{abstract}
We consider a random symmetric matrix \(\X = [X_{jk}]_{j,k=1}^n\) with upper triangular entries being independent random variables with mean zero and unit variance. Assuming that \( \max_{jk} \E |X_{jk}|^{4+\delta} < \infty, \delta > 0\), it was proved in \cite{GotzeNauTikh2016a} that with high probability the typical distance between the Stieltjes transforms \(m_n(z)\), \(z = u + i v\), of the empirical spectral distribution (ESD) and the Stieltjes transforms  \(m_{sc}(z)\) of the semicircle law is of order \((nv)^{-1} \log n\).  The aim of this paper is to remove \(\delta>0\) and show that this result still holds if we assume that \( \max_{jk} \E |X_{jk}|^{4} < \infty\). We also discuss applications to the rate of convergence of the ESD to the semicircle law in the Kolmogorov distance, rates of localization of the eigenvalues around the classical positions and rates of delocalization of eigenvectors.  

\end{abstract}

\maketitle

\section{Introduction and main result}
One of the main questions in random matrix theory is to investigate the limiting behaviour of spectral statistics of eigenvalues of large dimensional random matrices, for example, the distance between neighbouring eigenvalues or \(k\)-point correlation function. It turns out that there is a universality phenomena which states that the distribution of these statistics is independent of the particular distribution of the matrix entries, but depends on some global characteristics like the existence of moments. In the recent years there was a significant progress in the analysis of universality phenomena for the Wigner ensemble of random matrices, i.e. Hermitian matrices with independent entries subject
to the symmetry constraint. We refer the interested reader for a comprehensive literature review and more details to the forthcoming book by L. Erd\"os and H.-T. Yau~\cite{ErdosYauBOOK}. In the current paper we will not discuss the question of universality, but turn our attention to the {\it local semicircle law} which is the necessary intermediate step to the universality, but has its own important applications.

In what follows we consider a Hermitian random matrix \(\X: = [X_{jk}]_{j,k=1}^n\), such that \(X_{jk}, 1 \le j \le k \le n\) are independent random variables (r.v.) with zero mean. We also allow the distribution of matrix entries to depend on \(n\), but omit the latter from matrix notations. Furthermore, for simplicity we will assume that \( \E |X_{jk}|^2 = 1\) for all \(1 \le j \le k \le n\). As it was mentioned above we refer to such matrices as Wigner's ensemble. Denote the  eigenvalues of the normalized matrix \(\W: = n^{-1/2} \X\)  in the increasing order by \(\lambda_1 \le \ldots \le \lambda_n\)
and introduce the empirical spectral distribution (ESD) \(\mu_n:= n^{-1} \sum_{k =1}^n \delta_{\lambda_k}\). It was proved by E. Wigner~\cite{Wigner1955} and further generalized by many authors (see e.g. monographs~\cite{BaiSilv2010},~\cite{AndersonZeit},~\cite{Tao2012}) that with probability one
\(\mu_n\) weakly converges to the deterministic limit \(\mu_{sc}\) with absolutely continues density 
\begin{eqnarray}
g_{sc}(\lambda) := \frac{1}{2\pi} \sqrt{(4 - \lambda^2)_{+}},
\end{eqnarray}
where \((x)_{+}: = \max(x, 0)\). In particular, these results imply convergence in the {\it macroscopic regime}, i.e. for all intervals of fixed length and independent of \(n\), which contain macroscopically large number of eigenvalues. In turned out that an appropriate analytical tool is the Stieltjes transform of ESD \(\mu_n\) given by
\begin{eqnarray}\label{m_n def}
	m_n(z) := \int_{-\infty}^\infty \frac{\mu_n(d\lambda)}{\lambda - z},
\end{eqnarray}
where \(z = u + i v, v >0\). Under rather general conditions one may show (see e.g.~\cite{Tao2012}) that with probability one for {\it fixed} \(v > 0\)
\begin{eqnarray}\label{convergence}
\lim_{n \rightarrow \infty} m_n(z) = m_{sc}(z):= \int_{-\infty}^\infty \frac{g_{sc}(\lambda)\, d\lambda}{\lambda-z} = -\frac{z}{2} + \sqrt{\frac{z^2}{4} - 1}.
\end{eqnarray}
It is of  interest to investigate the {\it microscopic regime}, i.e. the case  of smaller intervals, where the number of eigenvalues cease to be macroscopically large. This regime is essential for many applications as the rate of convergence of \(\mu_n\) to the limiting distribution \(\mu_{sc}\), rigidity of eigenvalues \(\lambda_j, j = 1, \ldots, n,\) or delocalization of the corresponding eigenvectors \(\uu_j\) among others. To deal with this regime one needs to establish the convergence of \(m_n(z)\) to \(m_{sc}(z)\) in the region \(1 \geq v \geq f(n)/n\), where \(f(n) > 1\) is some function of \(n\). Significant progress in that direction was recently made in a series of results by L.~Erd{\"o}s, B.~Schlein, H.-T.~Yau, J.~Yin et al \cite{ErdosSchleinYau2009},~\cite{ErdosSchleinYau2009b},~\cite{ErdosSchleinYau2010},\cite{ErdosYauYin2012},~\cite{ErdKnowYauYin2013} showing that with high probability uniformly in \(u \in \R\)
\begin{eqnarray}\label{fluctuations of m_n around s}
|m_n(u+iv) - s(u+iv)| \le \frac{\log^{\alpha(n)} n }{nv},
\end{eqnarray}
where \(\alpha(n) := c \log \log n\) and \(c\) is some positive constant. This result was called  the {\it local semicircle law}. It means  that the fluctuations of \(m_n(z)\) around \(m_{sc}(z)\) are of order \((nv)^{-1}\) (up to a logarithmic factor). In the papers~\cite{ErdosSchleinYau2009},~\cite{ErdosSchleinYau2009b},~\cite{ErdosSchleinYau2010}, \cite{ErdosYauYin2012} the inequality ~\eqref{fluctuations of m_n around s}
has been proved  assuming that the distribution of \(X_{jk}\) has sub-exponential tails
for all \(1 \le j, k \le n\). Moreover in~\cite{ErdKnowYauYin2013} this assumption had been relaxed to requiring \(\beta_{p}: = \max_{j,k} \E |X_{jk}|^p \le C_p\) for all \(p \geq 1\), where \(C_p\) are some constants. In the recent years the series of results appeared, where the latter assumptions were further relaxed to the condition that 
\begin{eqnarray}\label{4+delta cond}
\beta_{4+\delta} < \infty 
\end{eqnarray} 
for some \(\delta>0\), see e.g.~\cite{ErdKnowYauYin2013a},~\cite{ErdKnowYauYin2012},~\cite{LeeYin2014},~\cite{GotzeNauTikh2015a},~\cite{GotzeNauTikh2015b},~\cite{GotzeNauTikh2018TVP},~\cite{GotTikh2015} and~\cite{GotzeNauTikh2016a}. In particular, the result of~\cite{GotzeNauTikh2016a} implies that~\eqref{fluctuations of m_n around s} holds with \(\alpha(n) \equiv 1\). 

The main emphasis of the current paper is to remove \(\delta\) from the condition~\eqref{4+delta cond}. The main idea of the proof is motivated by the recent result of A.~Aggarwal~\cite{Aggarwal2016} who established the bulk universality for Wigner's matrices with finite moments of order \(2 + \ve, \ve > 0\). He proved that~\eqref{fluctuations of m_n around s} still holds true, but the factor \((nv)^{-1}\) is replaced by \((nv)^{-1/2} + n^{-c \varepsilon}\), where \(c>0\) is some constant depending on \(\varepsilon\) and \(\beta_{2 + \varepsilon}\).  In the current paper we show that~\eqref{fluctuations of m_n around s} still holds assuming finite fourth moment only. Taking into account the behaviour of the extreme eigenvalues of \(\X\) we also believe that it is the best possible moment assumption for~\eqref{fluctuations of m_n around s} to remain valid. In the section~\ref{sketch} below we briefly discuss how using technique from \cite{Aggarwal2016} and~\cite{GotzeNauTikh2016a} one may achieve this aim.

\subsection{Notations} \label{sec: notation}
Throughout the paper we will use the following notations. We assume that all random variables are defined on common probability space $(\Omega, \mathcal F, \Pb)$ and let $\E$ be the mathematical expectation with respect to $\Pb$. For a r.v. \(\xi\) we use notation \(\E^{1/p} \xi\) to denote \( (\E \xi)^{1/p}\).   
We denote by $\one[A]$ the indicator function of the set $A$.

We denote by $\R$ and $\C$ the set of all real and complex numbers. Let \(\imag z, \re z\) be the imaginary and real parts of \(z\in \C\). We also define $\C^{+}: = \{z \in \C: \imag  z \geq 0\}$. Let $\T = [1, ... , n]$ denotes the set of the first $n$ positive integers. For any $\J \subset \T$ introduce $\T_{\J}: = \T \setminus \J$. To simplify all notations we will write $\T_j, \T_{\J, j}$ instead of $\T_{\{j\}}$ and $\T_{\J \cup \{j\}}$ respectively.

For any matrix $\W$ together with its resolvent $\RR$ and Stieltjes transform $m_n$
we shall systematically use the corresponding notations	$\W^{(\J)}, \RR^{(\J)}, m_n^{(\J)}$, respectively,  for the  sub-matrix  of $\W$ with entries $X_{jk}, j,k \in \T\setminus \J$. For simplicity we write $\W^{(j)}, \W^{(\J,j)}$ instead of $\W^{(\{j\})}, \W^{(\J \cup \{j\})}$. The same is applies to $\RR, m_n$ etc.

By $C$ and $c$ we denote some positive constants. 

For an arbitrary matrix $\A$ taking values in $\C^{n \times n}$ we define the operator norm by $\|\A\|: = \sup_{x \in \R^n: \|x\| = 1} \|\A x\|_2$, where $\|x\|_2 : = (\sum_{j = 1}^n |x_j|^2)^\frac12$. We also define the Hilbert-Schmidt norm by $\|\A\|_2: = \Tr^\frac12 \A \A^{*} = (\sum_{j,k = 1}^n |\A_{jk}|^2)^\frac12$.

\subsection{Main results} 
Without loss of generality we will assume in what follows that \(\X\) is a real symmetric matrix which satisfies the following conditions. 
\begin{definition}[Conditions \(\Cond\)] We say that a Hermitian random matrix \(\X\) satisfies conditions \(\Cond\) if its entries in the upper triangular part are independent random variables with \( \E X_{jk}^{(n)} = 0, \E |X_{jk}^{(n)}|^2 = 1\) and \( \max_{j,k,n} \E |X_{jk}^{(n)}|^4 =: \beta_4 < \infty\). 
\end{definition}
Our results proven below apply to the case of Hermitian matrices as well. Here we may additionally assume for simplicity that real and imaginary parts, \(\re X_{jk}, \imag X_{jk}\), are independent r.v. for all \(1 \le j < k \le n\). 
Otherwise  one needs to extend the moment inequalities for linear and quadratic forms in complex r.v. (see~\cite{GotzeNauTikh2015a}[Theorem A.1-A.2]) to the case of dependent real and imaginary parts, the details of which we omit.   

We will also often refer to the following condition \(\CondTwo\). 
\begin{definition}[Conditions \(\CondTwo\)]
	We say that the set of conditions \(\CondTwo\) holds if \(\Cond\) are satisfied and \(|X_{jk}| \le  \sqrt n/\overline R, 1 \le j,k \le n\), where \(\overline R \geq \log^3 n\).  
\end{definition}

Let us introduce the following notation
\begin{eqnarray*}
\Lambda_n(z) := m_n(z) - m_{sc}(z), \quad z = u + i v,
\end{eqnarray*}
where \(m_n(z), m_{sc}(z)\) were defined in~\eqref{m_n def} and~\eqref{convergence} respectively.  Recall that \(\imag \Lambda_n\) is the imaginary part of \(\Lambda_n\). The main result of this paper is the following theorem, which estimates the fluctuations~\eqref{fluctuations of m_n around s}.  
\begin{theorem}\label{th:main}
Assume that the conditions \(\CondTwo\) hold and let \(V > 0\) be some constant. 
\begin{itemize}
\item There exist positive constants \(A_0, A_1\) and \(C\) depending on \(V\) and \(\beta_4\)  such that
\begin{eqnarray}\label{abs fluct}
\E |\Lambda_n(u+iv)|^p \le \bigg(\frac{Cp}{nv}\bigg)^p,
\end{eqnarray}
for all \(1 \le p \le A_1 \log n\), \(V \geq v \geq A_0 n^{-1} \log^2 n\) and \(|u| \le 2+v\).
\item For any \(u_0 > 0\) there exist positive constants \(A_0, A_1\) and \(C\) depending on \(u_0, V\)
such that
\begin{eqnarray}\label{imag fluct}
\E |\imag \Lambda_n(u+iv)|^p \le \bigg(\frac{Cp}{nv}\bigg)^p,  
\end{eqnarray}
for all \(1 \le p \le A_1 \log n\), \(V \geq v \geq A_0 n^{-1} \log^2 n\) and \(|u| \le u_0\).
\end{itemize}
\end{theorem}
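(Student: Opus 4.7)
The plan is to derive and stabilize the self-consistent equation for $m_n(z)$ via the usual resolvent / Schur complement approach of \cite{ErdosYauYin2012} and \cite{GotzeNauTikh2016a}, but to replace every step that previously invoked a $(4+\delta)$-moment bound by a truncation-based estimate in the spirit of Aggarwal~\cite{Aggarwal2016}. Throughout, the hypothesis \CondTwo\ provides the deterministic bound $|X_{jk}|\le\sqrt n/\overline R$ with $\overline R\ge\log^3 n$, which is the key leverage that lets one remove $\delta>0$.

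\textbf{Step 1: self-consistent equation.} Applying the Schur complement to the $j$-th row and column gives
\begin{equation*}
R_{jj}(z)=-\frac{1}{z+m_n^{(j)}(z)+\varepsilon_j(z)},
\end{equation*}
where $\varepsilon_j(z)=\varepsilon_j^{(1)}(z)+\varepsilon_j^{(2)}(z)+\varepsilon_j^{(3)}(z)$, with $\varepsilon_j^{(1)}=n^{-1/2}X_{jj}$, $\varepsilon_j^{(2)}=\frac{1}{n}\sum_{k\neq l,\,k,l\in\T_j}X_{jk}X_{jl}R^{(j)}_{kl}(z)$, and $\varepsilon_j^{(3)}=\frac{1}{n}\sum_{k\in\T_j}(X_{jk}^2-1)R^{(j)}_{kk}(z)$. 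Averaging over $j$, using the Cauchy interlacing bound $|m_n-m_n^{(j)}|\le(nv)^{-1}$ and the identity $m_{sc}^2+zm_{sc}+1=0$, one arrives at the perturbed quadratic equation
\begin{equation*}
\bigl(z+2m_{sc}(z)\bigr)\Lambda_n(z)+\Lambda_n(z)^2=-T_n(z),
\end{equation*}
where $T_n(z)=\frac{1}{n}\sum_{j=1}^nR_{jj}(z)\varepsilon_j(z)+O\bigl((nv)^{-1}\bigr)$.

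\textbf{Step 2: moment bound on $T_n$.} To prove $\E|T_n(z)|^p\le(Cp/(nv))^p$ for $1\le p\le A_1\log n$, I estimate each $\varepsilon_j^{(i)}$ separately. The diagonal piece $\varepsilon_j^{(1)}$ is controlled directly by $\beta_4$. The bilinear piece $\varepsilon_j^{(2)}$ is, conditionally on the minor indexed by $\T_j$, a sum of martingale differences whose quadratic variation equals $\tfrac{2}{n^2}\sum_{k,l\in\T_j}|R^{(j)}_{kl}|^2$; a Burkholder--Rosenthal inequality with $L^p$-constant of order $p$, combined with the Ward identity $\sum_l|R^{(j)}_{kl}|^2=v^{-1}\imag R^{(j)}_{kk}$ and the deterministic $\|R^{(j)}\|\le v^{-1}$, yields the target order. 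The quadratic deviation $\varepsilon_j^{(3)}$ is the delicate piece: under only $\beta_4<\infty$ one cannot control $\E|X_{jk}|^{2p}$ uniformly in $n$, so I use the truncation to write
\begin{equation*}
\E|X_{jk}|^{2p}\le\beta_4\bigl(\sqrt n/\overline R\bigr)^{2p-4},\qquad p\ge 2,
\end{equation*}
and insert this into a Rosenthal-type inequality for quadratic forms of independent r.v.'s (as in~\cite{GotzeNauTikh2015a} and refined in~\cite{Aggarwal2016}). Combined once more with Ward, this gives $\E|\varepsilon_j^{(3)}|^p\le(Cp/(nv))^p$ provided $\overline R\ge\log^3 n$ and $nv\ge A_0\log^2 n$. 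Summing and using $|R_{jj}|\le v^{-1}$ produces the asserted bound on $T_n$.

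\textbf{Step 3: stability, bootstrap, and main obstacle.} The fluctuation bounds \eqref{abs fluct}--\eqref{imag fluct} then follow from the quadratic equation of Step~1 together with its stability: the coefficient satisfies $|z+2m_{sc}(z)|\asymp\sqrt{(4-u^2)_{+}}+v$ for $|u|\le 2+v$, so on an event where $|\Lambda_n|\le\tfrac12|z+2m_{sc}|$ the equation inverts to $|\Lambda_n|\le 2|T_n|/|z+2m_{sc}|$, giving \eqref{abs fluct}. A standard bootstrap along a $v$-net of mesh $n^{-3}$, started from the trivial estimate $|\Lambda_n|\le Cv^{-1}$ at $v=V$ and descending step by step via Markov and a union bound, propagates the $L^p$-estimate on $T_n$ to the claimed $L^p$-estimate on $\Lambda_n$ down to $v\ge A_0 n^{-1}\log^2 n$. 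For \eqref{imag fluct} one uses that in the strict bulk $|u|\le u_0$ the imaginary part $\imag m_{sc}(u+iv)$ is bounded away from zero, which removes the edge degeneracy of the stability constant. The \emph{main technical obstacle} is Step~2 for $\varepsilon_j^{(3)}$: under only finite fourth moment, the naive Rosenthal bound produces factors $(\sqrt n/\overline R)^{2p-4}$ that a priori grow with $p$; the remedy, adapted from \cite{Aggarwal2016} and sharpened to yield the $(nv)^{-1}$ rate, is to split the quadratic form into a martingale part plus a small boundary correction and to exploit both the truncation $\overline R\ge\log^3 n$ and the $n^{-1/2}$-normalization of $\W$ to absorb the dangerous factor uniformly in $p\le A_1\log n$.
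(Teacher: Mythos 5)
The central gap is in your Step 2. Bounding each \(\varepsilon_j\) individually and then summing (let alone with \(|\RR_{jj}|\le v^{-1}\)) cannot produce \(\E|T_n|^p\le (Cp/(nv))^p\): the quadratic form \(\varepsilon_{2j}\) has conditional second moment of order \(\imag m_n^{(j)}/(nv)\), so the best possible individual bound is \(\E|\varepsilon_j|^{p}\approx (Cp/(nv))^{p/2}\), and a termwise estimate of \(T_n=\frac1n\sum_j\varepsilon_j\RR_{jj}\) only yields the square-root rate. The \((nv)^{-1}\) rate for \(T_n\) comes from cancellation in the average over \(j\), which is exactly why the paper devotes Section~\ref{sec: bound for T} to it: it applies the framework of Lemma~\ref{lem: T_n general lemma} (conditional centering \(\E_j\xi_j=0\), the approximant \(\widehat f_j=-(z+m_n^{(j)}(z))^{-1}\), and the terms \(\mathcal A,\mathcal B,\mathcal C,\mathcal D\) which require only low conditional moments of \(\xi_j\)), together with a separate treatment of the \(\varepsilon_{4j}\)-part. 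Moreover, even the bound you claim is too weak at the edge: since \(|b(z)|\asymp\sqrt{\kappa+v}\) can be as small as \(\sqrt v\), inverting \(|\Lambda_n|\le 2|T_n|/|b(z)|\) with \(\E|T_n|^p\le(Cp/(nv))^p\) does not give \((Cp/(nv))^p\) for all \(|u|\le 2+v\). The paper's Theorem~\ref{th: general bound} carries the essential extra factor \(|b(z)|^{p}\) in the leading term plus a \((nv)^{-2p}\) term, and the final argument uses the dichotomy \(|\Lambda_n|\le C\min\{|T_n|/|b(z)|,\sqrt{|T_n|}\}\) according to whether \(|b(z)|\) is larger or smaller than \(Cp(nv)^{-1}\); your Step 3 only exploits the first branch.

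The second gap concerns the moment issue that is the whole point of removing \(\delta\). Inserting \(\E|X_{jk}|^{2p}\le\beta_4(\sqrt n/\overline R)^{2p-4}\) into Rosenthal-type inequalities for \(\varepsilon_{3j}\) (and \(\varepsilon_{2j}\)) is precisely the route the paper shows to fail in Section~\ref{sketch}: the dangerous term then carries no power of \(n\) in the denominator and is of order \(\beta_4^2C^pp^{2p}\overline R^{\,8-2p}\), which is far larger than the required \((nv)^{-p}\)-scale accuracy and becomes uncontrollable for the larger exponents \(q\sim (nv)^{1/4}/\underline R\) needed in the recursive resolvent-moment bound (Lemma~\ref{lemma step for resolvent}). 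Under a pure fourth-moment assumption the paper's remedy is not a martingale-plus-boundary split but Aggarwal's two-level truncation: the configuration matrix \(\LL\) with second threshold \(n^{1/4}\underline R\), restriction to \(r\)-admissible configurations (at most \(r=\log^3 n\) large entries per row, inadmissibility probability \(n^{-c\log^2 n}\)), sharp bounds on the sub-rows of small entries, crude \(p\)-independent bounds on the few large ones, a moment-matching replacement by sub-Gaussian entries to reach the optimal \(v\)-range, and the resulting a priori bounds \(\E|\RR_{jk}|^p\le H_0^p\) with \(p\sim\log^2 n\), which are then used to control events such as \(|\RR_{jj}|>H_0\) inside the \(T_n\)-estimate. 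None of this machinery appears in your proposal, and without it neither the \(\varepsilon\)-estimates nor the bootstrap in \(v\) closes.
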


\begin{remark}
1. Using Markov's inequality the bound~\eqref{abs fluct} may be used to show that for any \(Q > 0\) there exists some positive constant \(C\) such that with probability at least \(1 -n^{-Q}\) for all \(v \geq A_0 n^{-1} \log^2 n\) and \(|u| \le 2 + v\):
\begin{eqnarray*}\label{fluctuations of m_n around s 2}
|\Lambda_n(u+iv)| \le \frac{C \log n}{nv}.
\end{eqnarray*}
Hence,~\eqref{fluctuations of m_n around s} holds with \(\alpha(n) \equiv 1\). 

\noindent 2. It is interesting to investigate the case of generalised matrix when for any \(j = 1, \ldots, n\) \(\sum_{k= 1}^n \E |X_{jk}|^2 = n\), but \(\E |X_{jk}|^2\) could be different. Unfortunately, the technique of the current paper doesn't allow to deal with such case directly. Fortunately, one may apply a combination of the {\it multiplicative} descent used in this paper (and first developed in~\cite{Schlein2014}) together with the {\it additive} descent developed in the series of papers by L.~Erd{\"o}s, B.~Schlein, H.-T.~Yau, J.~Yin et al; see e.g.~\cite{ErdosYauBOOK}. This combination was recently used in~\cite{GotzeNauTikh2017a}. We don't give details here to simplify the proof. 
\end{remark}

The result of the previous theorem may be formulated under conditions \(\Cond\). In this case one may truncate and re-normalize the entries of \(\X\) by means of Lemmas~\ref{appendix: lemma trunc 1}--~\ref{appendix: lemma trunc 2}  in the appendix. We obtain the following corollary. 
\begin{corollary}\label{maim cor}
Assume that the conditions \(\Cond\) hold and let \(V > 0\) be some constant. There exist positive constants \(A_0, A_1\) and \(C\) depending on \(V\) and \(\beta_4\)  such that
\begin{eqnarray*}
\E |\Lambda_n(u+iv)|^p \le \frac{C^p \log^{12p}n}{(nv)^p},
\end{eqnarray*}
for all \(1 \le p \le A_1 \log n\), \(V \geq v \geq A_0 n^{-1} \log^2 n\) and \(|u| \le 2+v\). Similar result holds true for~\eqref{imag fluct}. 
\end{corollary}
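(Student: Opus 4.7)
The plan is to derive Corollary~\ref{maim cor} from Theorem~\ref{th:main} by a truncation and re-normalization argument, with the extra factor $\log^{12p} n$ arising as the price paid for having only a finite fourth moment. Set $T_n := \sqrt{n}/\log^3 n$.

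For the upper triangular entries, introduce $\widehat{X}_{jk} := X_{jk} \one[|X_{jk}| \le T_n]$ and $\widetilde{X}_{jk} := (\widehat{X}_{jk} - \E \widehat{X}_{jk})/\sigma_{jk}$ with $\sigma_{jk}^2 := \Var(\widehat{X}_{jk})$. The fourth-moment hypothesis gives $\E(X_{jk} - \widehat{X}_{jk})^2 \le \beta_4/T_n^2 = \beta_4 \log^6 n/n$, hence $1 - \sigma_{jk}^2 = O(\log^6 n/n)$, and the symmetrized matrix $\widetilde{\X} := [\widetilde{X}_{jk}]$ satisfies condition~\CondTwo{} with $\overline{R} = \log^3 n$ up to harmless constants. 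Lemmas~\ref{appendix: lemma trunc 1}--\ref{appendix: lemma trunc 2} formalize this step. Applying Theorem~\ref{th:main} to $\widetilde{\X}$ yields $\E |\widetilde{\Lambda}_n(z)|^p \le (Cp/(nv))^p$ on the prescribed range, where $\widetilde{\Lambda}_n := \widetilde{m}_n - m_{sc}$.

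It remains to control $\Lambda_n - \widetilde{\Lambda}_n = m_n - \widetilde{m}_n$. Let $N$ denote the number of upper-triangular positions with $|X_{jk}| > T_n$. Markov's inequality gives $\Pb(|X_{jk}| > T_n) \le \beta_4/T_n^4 = \beta_4 \log^{12} n/n^2$, so $\E N \le \beta_4 \log^{12} n$. Since $N$ is a sum of independent Bernoulli variables, Bernstein's inequality produces, for any fixed $Q > 0$, an event $\mathcal{E}$ of probability at least $1 - n^{-Q}$ on which $N \le C_Q \log^{12} n$. On $\mathcal{E}$ the perturbation $\W - \widetilde{\W}$ is supported in $O(N)$ rows and columns, so it decomposes as a Hermitian matrix of rank $O(N)$ plus a uniformly small mean-shift contribution. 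The standard interlacing bound $|m_n(z) - \widetilde m_n(z)| \le \pi \Rank(\W - \widetilde\W)/(nv)$, together with the fact that the mean-shift contribution is $o(1/(nv))$ by a direct Frobenius-norm estimate, yields $|\Lambda_n - \widetilde{\Lambda}_n| \le C \log^{12} n/(nv)$ on $\mathcal{E}$. On $\mathcal{E}^c$ the trivial bound $|\Lambda_n - \widetilde{\Lambda}_n| \le 2/v$ combined with $Q = Q(p)$ large enough absorbs the tail contribution into $(C \log^{12} n/(nv))^p$. The triangle inequality and the bound on $\widetilde{\Lambda}_n$ then yield the announced estimate.

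The main obstacle is this perturbation step. A direct Hilbert--Schmidt bound only yields $\E \|\W - \widetilde{\W}\|_2^2 = O(\log^6 n)$, which via $|m_n - \widetilde m_n| \le \|\W - \widetilde\W\|_2/(\sqrt n v^2)$ loses uncontrolled factors of $n/v$ and is useless in the microscopic regime. One must instead exploit that the perturbation is \emph{sparse} rather than small, and argue that $\Rank(\W - \widetilde\W)$ is controlled by $N$, whose expectation is $O(\log^{12} n)$. This rank/concentration interplay is precisely what produces the $\log^{12p}$ factor in the final estimate.
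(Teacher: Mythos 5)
Your overall route is the paper's route: truncate at $\sqrt n/\log^3 n$, center and rescale so that the modified matrix satisfies \CondTwo{} with $\overline R=\log^3 n$, apply Theorem~\ref{th:main} to it, and pay for the removed entries through a rank/counting bound, which is exactly where $\log^{12}n$ comes from (the expected number of entries above the threshold is $O(\overline R^4)=O(\log^{12}n)$; the paper does this in $L_p$ via Bai's rank inequality plus Rosenthal in Lemma~\ref{appendix: lemma trunc 1}, while you use a Chernoff-type event plus the trivial bound on its complement, which is an acceptable variant). However, your bookkeeping of the comparison $m_n-\widetilde m_n$ has two genuine gaps. First, the mean-shift term is \emph{not} $o(1/(nv))$ and cannot be handled ``by a direct Frobenius-norm estimate''. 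One has $\|\E\hat \W\|_2\le C\log^9 n/n$, and the direct estimates $|m_n-\widetilde m_n|\le \|\E\hat\W\|_2/(\sqrt n\,v^2)$ or $\le\|\E\hat\W\|/v^2$ exceed $1/(nv)$ by factors as large as $\sqrt n$ (respectively $n v^{-1}\log^{-3}n$ relative to the target) when $v\asymp n^{-1}\log^2 n$. The paper's Lemma~\ref{appendix: lemma trunc 2} instead writes the difference as $n^{-1}\Tr(\E\hat\W)\hat\RR\widetilde\RR$ and combines $\|\hat\RR\|\le v^{-1}$ with the Hilbert--Schmidt bound $\|\widetilde\RR\|_2\le (n\imag \widetilde m_n/v)^{1/2}$ and an a priori moment bound on $\imag\widetilde m_n$, yielding $(C\log^9 n)^p(nv)^{-3p/2}$; this is still not $o((nv)^{-1})$ pointwise, but it is dominated by $(\log^{12}n/(nv))^p$ because $nv\ge A_0\log^2 n$. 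Without this refinement your mean-shift step fails in the microscopic regime.

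Second, since you define $\widetilde X_{jk}=(\hat X_{jk}-\E\hat X_{jk})/\sigma_{jk}$, the perturbation $\W-\widetilde\W$ is \emph{not} supported on $O(N)$ rows and columns: besides the low-rank part from the large entries and the deterministic mean shift, it contains the full random matrix with entries $(1-\sigma_{jk}^{-1})(\hat X_{jk}-\E\hat X_{jk})/\sqrt n$, which is neither low rank nor uniformly small in the sense you use. This rescaling contribution has to be estimated separately; the paper does it in Lemma~\ref{appendix: lemma trunc 0} (with a scalar variance correction, via the identity $\widetilde\RR(z)=\sigma^{-1}\breve\RR(\sigma^{-1}z)$, a resolvent equation, and Hilbert--Schmidt bounds), and it contributes a term of order $(C\log^6 n)^p\,\mathcal A^p(2p)/(nv)^p$, again below the $\log^{12p}$ target but not negligible by inspection. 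With an entrywise rescaling as in your set-up you would need a substitute argument (e.g.\ an operator-norm bound on the truncated centered matrix or a resolvent expansion), so as written this term is simply unaccounted for.
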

We believe that the power of the logarithm could be reduced. The main technical problem is in Lemmas~\ref{appendix: lemma trunc 1}--~\ref{appendix: lemma trunc 2}  in the appendix. Truncation on the level near \(\sqrt n\) requires additional logarithmic factors. 

\subsection{Sketch of the proof of Theorem~\ref{th:main}} \label{sketch}
To prove Theorem~\ref{th:main} we use the strategy from~\cite{GotzeNauTikh2016a}. 
\begin{enumerate}
\item Applying inequality~\eqref{eq: abs value lambda main result section} we may estimate \(\E|\Lambda_n|^p\) (depending on \(\re(z)\) being near or far from the spectral interval \([-2, 2]\)) by the moments \(\E |T_n(z)|^p\). This inequality first appeared in~\cite{Schlein2014}[Proposition 2.2]. 
\item Estimation of \(\E |T_n(z)|^p\) consists of two parts: \\
a) Estimation of \(\E|\RR_{jk}(z)|^p\); see Lemma~\ref{main lemma}. This bound requires to estimate high moments of \(\varepsilon_j\) (i.e. quadratic and linear forms \(\varepsilon_{2j}, \varepsilon_{3j}\)); see~\eqref{vareps def}. This step also uses the following crude bound
\begin{eqnarray}\label{T n crude bound sketch}
\E|T_n|^p \le \frac{1}{n}\sum_{j=1}^n \E^{1/2} |\varepsilon_j|^{2p}\E^{1/2} |\RR_{jj}|^{2p}. 
\end{eqnarray}
Unfortunately, the technique from~\cite{GotzeNauTikh2015a},~\cite{GotzeNauTikh2018TVP} doesn't work since we may truncate \(X_{jk}\) on the level \(\sqrt n / \overline R\), where \(\overline R\) is of the logarithmic order (opposite to the case when \(\E|X_{jk}|^{4+\delta} < \infty\). This allows to truncate on the level \(n^{1/2-\phi}\) for some small \(\phi\)). Let us demonstrate this on the quadratic form \(\varepsilon_{j2}\). Applying~\cite{GotzeNauTikh2018TVP}[Theorem~7] or~\cite{GotzeNauTikh2015a}[Theorem A.2] we obtain
\begin{eqnarray*}
\qquad\qquad\E |\varepsilon_{2j}^{(j)}|^p \le \frac{C^p p^p}{(nv)^{p/2}}\E \imag m_n^{(j)}(z) +
\frac{\beta_p p^{\frac{3p}{2} }}{n^p v^{p/2}} \sum_{k = 1}^n \E \imag^{p/2} \RR_{kk}^{(j)} + \frac{C^p p^{2p} \beta_{p}^2}{n^p} \sum_{k,l=1}^n \E |\RR_{kl}^{(j)}|^{2p}, 
\end{eqnarray*} 
where \(p\) satisfies \(1 \le p \le (nv)^{1/4}\). There is no problem to deal with first two terms in the r.h.s. of the previous inequality. The most difficult term is the last one. In the sub-Gaussian case this term has the order \(C^p p^{4p} (n^2v)^{-p}\) (see~\cite{GotzeNauTikh2016a}[Lemma 4.4] ) and is small for \(n^{-1} \le v \le 1\) (here we also use the fact that \( \beta_p^{1/p} \le C \sqrt p\)). Under assumptions \(\CondTwo\) we may only guaranteer that \(\E |\RR_{kl}^{(j)}|^{2p} \le C^p\). But \( \beta_p \le \beta_4 n^{p/2 - 2} \overline R^{4-p}, p \geq 4\). Hence, the last term in the estimate for  \(\varepsilon_{j2}\) is bounded by \(\beta_4^2 C^p p^{2p} \overline R^{-2p + 8}\), which could be very large for large \(p\). It is worth to mention here, that if one can truncate on the level, say, \(n^{1/4}\), then there will be an additional factor \(n^{p/2}\) in the denominator. 

To overcome this problem we use ideas from \cite{Aggarwal2016}. We introduce configuration matrix \(\LL:=[L_{jk}]_{j,k=1}^n\) such that \(L_{jk} = 1\) if \(|X_{jk}| \le n^{1/4} \underline R\) and \(L_{jk} = 0\) if \(n^{1/4} \underline R < |X_{jk}| \le n^{1/2} \overline R\) for some\(\underline R\) of the logarithmic order; see~\eqref{configuration matrix}. One may show that with high probability this matix
has the block structure (see~\eqref{configuration matrix 2}). This means that with high probability in each row and in each column of \(\X\) there is only small (of logarithmic order) number of {\it large} entries (\(L_{jk} = 0\)) and large number of {\it small} entries.  Fixing {\it admissible}  (see Definition~\ref{admissible} below) configuration \(\LL\) (corresponding to the block structure) one may estimate  \(\E(|\RR_{jk}(z)|^p \big | \LL), z = u + iv\); see Lemma~\ref{G_{jk} 0 step}. For each subrow with small entries we use bounds from~\cite{GotzeNauTikh2015a},~\cite{GotzeNauTikh2018TVP}. For each subrow with large entries we may use crude bounds which doesn't contain factor \(p^p\); see decomposition~\eqref{decomposition 1} and  corresponding estimates below.  Using now total probability rule and the crude bound \(\|\RR(z)\| \le v^{-1}\) if \(\LL\) is not admissible we  estimate \(\E|\RR_{jk}(z)|^p\).  \\

\noindent b) More accurate (than~\eqref{T n crude bound sketch}) bounds for \(\E |T_n(z)|^p\); see section~\ref{sec: bound for T}.  We use Lemma~\ref{lem: T_n general lemma} which provides a general framework for estimation of moments of statistics of independent r.v. This requires estimation of  \(\E |\varepsilon_j|^{\alpha}\) for \(1 \le \alpha \le 2\). The latter could be done since \(X_{jk}\) has moments of order up to \(4\). 
\end{enumerate}

\subsection{Applications of the main results} 
This section is addressed to application of Theorem~\ref{th:main} and Corollary~\ref{maim cor} to different questions as the rate of convergence of the ESD \(\mu_n\) to the semicircle law \(\mu_{sc}\), rigidity estimates for the eigenvalues \(\lambda_j, j = 1, \ldots, n\) and delocalization bounds for the corresponding eigenvectors \(\uu_j, j=1, \ldots, n\). Up to the power of logarithmic factors these results repeat the corresponding results from~\cite{GotzeNauTikh2015b},~\cite{GotzeNauTikh2016a}. We formulate all results with comments, but leave the proof. The interested reader may recover the proof from the corresponding papers mentioned above. It is worth to mention that these questions has been intensively studied under stronger assumptions in many papers; see e.g.~\cite{ErdosSchleinYau2009},~\cite{ErdosSchleinYau2009b},~\cite{ErdosSchleinYau2010},~\cite{ErdKnowYauYin2012},~\cite{ErdKnowYauYin2013a},~\cite{ErdKnowYauYin2013} and~\cite{TaoVu2011}. We also refer to the recent monograph~\cite{ErdosYauBOOK} and survey~\cite{TaoVu2011surv}.
\subsubsection{Rate of convergence of ESD}
Our first result provides quantitative estimates for the rate of convergence of the ESD to the semicircle law in the Kolmogorov distance. 
\begin{corollary} \label{th: rate of convergence}
Assume that the conditions \(\Cond\) hold. For any \(Q > 0\) there exists positive constant \(C\) such that with probability at least \(  1 - n^{-Q}\)
\begin{eqnarray*}
\Delta_n^{*}: = \sup_{x \in \R} |\mu_n((-\infty, x]) - \mu_{sc}((-\infty, x])| \le \frac{C \log^{12} n}{n}.
\end{eqnarray*}
\end{corollary}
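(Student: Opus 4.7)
The plan is to deduce the Kolmogorov bound from the uniform pointwise control of $\Lambda_n$ given by Corollary~\ref{maim cor}, combined with a Stieltjes transform inversion (``smoothing'') inequality adapted to the semicircle law.

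First I would upgrade the moment bound of Corollary~\ref{maim cor} to an almost sure uniform estimate. Fix $v_0 := A_0 n^{-1}\log^2 n$, a constant $V_0 > 0$, and $A > 2 + V_0$. Applying Markov's inequality with $p = \lceil c Q \log n \rceil$ to Corollary~\ref{maim cor} and a union bound over a polynomial ($n^{-K}$) lattice in $(u,v) \in [-A,A]\times[v_0, V_0]$ gives, on an event of probability at least $1 - n^{-Q}$, that $|\Lambda_n(u+iv)| \le C \log^{12} n/(nv)$ at every lattice point. The resolvent identity implies that $\Lambda_n$ is Lipschitz in $(u,v)$ with constant at most $v_0^{-2}$, which is polynomial in $n$, so the inequality extends to all $(u,v) \in [-A,A]\times [v_0,V_0]$ at negligible cost.

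Next I would invoke a smoothing inequality of the form developed in~\cite{GotzeNauTikh2015b,GotzeNauTikh2016a} to convert this pointwise control into a Kolmogorov estimate. Schematically, such an inequality expresses
\begin{equation*}
\Delta_n^* \le C \Bigl( v_0 + \int_{-A}^A |\Lambda_n(u+iV_0)|\,du + \sup_{|u|\le A}\Bigl|\int_{v_0}^{V_0} \Lambda_n(u+it)\,dt\Bigr| \Bigr),
\end{equation*}
exploiting the Lipschitz regularity of the semicircle density and the analyticity of $\Lambda_n$. Plugging in the bound from the first step, the first term is $O(\log^2 n/n)$, the second term evaluated at the macroscopic scale $V_0 = O(1)$ is $O(\log^{12} n/n)$, and the vertical integral is $\int_{v_0}^{V_0} \log^{12} n/(nt)\,dt = O(\log^{13} n/n)$, which is absorbed into the constant after a mild refinement of the smoothing argument to match the stated $\log^{12} n/n$.

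The main obstacle is the smoothing step: the na\"ive Bai inequality applied at scale $v_0$ only gives $\Delta_n^* \le C(v_0 + 2A \sup_u|\Lambda_n(u+iv_0)|)$, of order $\log^{10} n$ and hence useless. The refinements in~\cite{GotzeNauTikh2015b} circumvent this by integrating in the vertical direction, so that the troublesome small-$v$ region contributes only a logarithmic factor instead of the full $v_0^{-1}$ blow-up. Once this is in place, the passage from Corollary~\ref{maim cor} to Corollary~\ref{th: rate of convergence} is mechanical.
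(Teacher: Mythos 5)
Your proposal takes essentially the same route as the paper, which proves this corollary by combining the moment bound of Corollary~\ref{maim cor} (via Markov's inequality at \(p \asymp \log n\)) with the smoothing inequality of~\cite{GotzeNauTikh2016a}[Corollary 6.2], i.e.\ the Stieltjes-transform inversion with vertical integration replacing the na\"ive Bai bound at scale \(v_0\), exactly as you describe. The only loose point in your write-up, the extra logarithm produced by \(\int_{v_0}^{V} (nt)^{-1}\,dt\) that you absorb by an unspecified ``mild refinement,'' is likewise left implicit in the paper, which simply defers to the proof of~\cite{GotzeNauTikh2016a}[Theorem 1.4] (and the authors note anyway that the power \(12\) is not optimal).
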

For the proof see~\cite{GotzeNauTikh2016a}[Theorem 1.4]. The difference is in application of Corollary~\ref{maim cor} instead of~\cite{GotzeNauTikh2016a}[Theorem 1.1].  The proof is mainly based on application of the smoothing inequality (see e.g.~\cite{GotzeNauTikh2016a}[Corollary 6.2]) and Corollary~\ref{maim cor}. We believe that the power of the logarithm could be reduced from \(12p\) to \(p\) or even \(\frac{p}{2}\), which would be optimal  due to the result of Gustavsson~\cite{Gustavsson2005} for the  {\it Gaussian Unitary Ensembles} (GUE). 

Using this result on main prove the following corollary
\begin{corollary}
Assume that  conditions \(\Cond\) hold. For any \(Q > 0\) there exists positive constant \(C\) such that for all \(\Delta > 0\) with probability at least \(  1 - n^{-Q}\): 
\begin{eqnarray*}
\big| \#\big\{\lambda_j \in \big[x - \Delta/2n, x + \Delta/2n \big]\big\} -  g_{sc}(x) \, \Delta \big | \leq \frac{C \log^{12} n}{n }.
\end{eqnarray*}
\end{corollary}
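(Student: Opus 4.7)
The plan is to deduce this from Corollary~\ref{th: rate of convergence} by rewriting the eigenvalue count as a difference of the empirical CDF at the two endpoints of the window, and then matching $n$ times the semicircle mass of the window to $g_{sc}(x)\Delta$ via a short Taylor expansion.

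Write $a := x - \Delta/(2n)$ and $b := x + \Delta/(2n)$, and let $F_n(t) := \mu_n((-\infty,t])$ and $F_{sc}(t) := \mu_{sc}((-\infty,t])$. Then
\[
\#\{\lambda_j \in [a,b]\} = n\bigl(F_n(b) - F_n(a)\bigr).
\]
On the event (of probability at least $1 - n^{-Q}$) supplied by Corollary~\ref{th: rate of convergence}, one has $\sup_t |F_n(t) - F_{sc}(t)| \le C\log^{12}(n)/n$, and applying this at $t = a$ and $t = b$ together with the triangle inequality gives
\[
\Bigl| \#\{\lambda_j \in [a,b]\} - n\bigl(F_{sc}(b) - F_{sc}(a)\bigr) \Bigr| \le 2C\log^{12} n.
\]

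Next I would expand the semicircle integral around $x$. Since $F_{sc}(b) - F_{sc}(a) = \int_a^b g_{sc}(\lambda)\, d\lambda$ and $g_{sc}$ is smooth on $(-2,2)$, the midpoint rule (equivalently a second-order Taylor expansion of $g_{sc}$ around $x$) yields
\[
n \int_a^b g_{sc}(\lambda)\, d\lambda = g_{sc}(x)\,\Delta + O\!\Bigl(\tfrac{\Delta^3}{n^2} \sup_{\lambda \in [a,b]} |g_{sc}''(\lambda)|\Bigr),
\]
and for $x$ bounded away from $\pm 2$ and $\Delta$ not too large the remainder is negligible compared to $\log^{12} n$ and can be absorbed into the constant. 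Combining this with the previous display gives the stated bound.

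The main obstacle is the behaviour of $g_{sc}$ near the spectral edges $\pm 2$, where $g_{sc}''$ is unbounded and the naive midpoint remainder blows up: there one must either restrict to $x$ in the bulk or replace the second step by a direct estimate of $F_{sc}(b) - F_{sc}(a)$ using its explicit expression $F_{sc}(t) = \tfrac{1}{2} + \tfrac{1}{\pi}(\tfrac{t}{2}\sqrt{4-t^2} + \arcsin(t/2))$, which handles the square-root cusp cleanly. Apart from this edge consideration no further probabilistic input is required beyond Corollary~\ref{th: rate of convergence} — the argument is a two-line reduction combined with a deterministic estimate for the semicircle density.
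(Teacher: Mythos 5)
Your route is the one the paper intends: the corollary is stated as an immediate consequence of Corollary~\ref{th: rate of convergence} with no further argument supplied, and your reduction --- writing the count as $n(F_n(b)-F_n(a^{-}))$ (note the left limit at $a$, to include a possible eigenvalue there; this is harmless since $F_{sc}$ is continuous), applying the Kolmogorov bound $\Delta_n^{*}\le C\log^{12}n/n$ at the two endpoints, and then comparing $n\int_a^b g_{sc}(\lambda)\,d\lambda$ with $g_{sc}(x)\Delta$, with the explicit formula for $F_{sc}$ or the $1/2$-H\"older continuity of $g_{sc}$ handling the edge --- is exactly that deduction.

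The genuine problem is your last line. Your two displays give $\bigl|\#\{\lambda_j\in[a,b]\}-g_{sc}(x)\Delta\bigr|\le 2n\Delta_n^{*}+\text{(quadrature error)}\le C\log^{12}n$, i.e. $n$ times the Kolmogorov distance, and \emph{not} $C\log^{12}n/n$ as in the statement you set out to prove; asserting that this ``gives the stated bound'' glosses over a factor of $n$. In fact the bound with the extra $1/n$ cannot hold as written: the left-hand side is the distance from an integer to $g_{sc}(x)\Delta$ (take $x=0$, $\Delta=1$, so $g_{sc}(x)\Delta=1/\pi$), which cannot be below $C\log^{12}n/n\to0$; the $1/n$ in the printed display is evidently a misprint, and what your argument actually proves, $C\log^{12}n$, is the correct right-hand side. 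You should have flagged this discrepancy rather than matching the printed bound. A smaller caveat: replacing $n\int_a^b g_{sc}$ by $g_{sc}(x)\Delta$ costs, uniformly up to the edge, at most $C\Delta^{3/2}n^{-1/2}$ (by the $1/2$-H\"older bound $|g_{sc}(\lambda)-g_{sc}(x)|\le C|\lambda-x|^{1/2}$), so the claimed uniformity ``for all $\Delta>0$'' requires either keeping this error explicitly or restricting the range of $\Delta$; for $\Delta$ of order one, as intended, it is negligible and your midpoint-rule estimate suffices in the bulk.
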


\subsubsection{Rigidity}\label{rigidity sec}
Taking into account the result of Theorem~\ref{th: rate of convergence} and using Smirnov's transform one may also get the rigidity estimates for the majority of eigenvalues \( \lambda_j\). More precisely, one may control the eigenvalues on the bulk of the spectrum. To deal with the smallest (largest) eigenvalues one needs more accurate bound then in Theorem~\ref{th:main} for the distance between Stieltjes transforms.  For any \(u \in \R\) we define \(\kappa: = \kappa(u):= ||u|-2|\). 
\begin{theorem}\label{th: stronger bound for imag part}
Assume that the conditions \(\CondTwo\) hold and \(u_0 > 2\) and \(V > 0\). There exist positive constants \(A_0, A_1\) and \(C\) depending on \(u_0, V\) and \(\beta_4\) such that
\begin{eqnarray*}
\E|\imag \Lambda_n(z)|^p \le \frac{C^p p^p }{n^p (\kappa + v)^p} +  \frac{C^p p^{2p}}{(nv)^{2p} (\kappa+ v)^\frac{p}{2}}  + \frac{C^p p^\frac p2}{n^p v^\frac{p}{2} (\kappa + v)^\frac{p}{2}} + \frac{C^p p^p}{(nv)^\frac{3p}{2} (\kappa+ v)^\frac{p}{4} }.
\end{eqnarray*}
for all \(1 \le p \le A_1 \log n\), \(V \geq v \geq A_0 n^{-1} \log^2 n\) and \(2 \le |u| \le u_0\).
\end{theorem}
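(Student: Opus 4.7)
The guiding idea is that for $2\le|u|\le u_0$ and $v\le V$ one has the edge asymptotics
$$|z+m_{sc}(z)| \asymp \sqrt{\kappa+v}, \qquad \imag m_{sc}(z) \asymp \frac{v}{\sqrt{\kappa+v}},$$
so every occurrence of $|z+m_{sc}|$ or $\imag m_{sc}$ that appeared in the proof of Theorem~\ref{th:main} now carries an additional edge gain of $\sqrt{\kappa+v}$. The whole strategy of \cite{GotzeNauTikh2016a} outlined in Section~\ref{sketch} can be repeated; the work is to track these gains carefully at every step and then close them by a bootstrap.

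First, I would start from the stability identity
$$\Lambda_n(z) = \frac{T_n(z)}{(z+m_n(z))(z+m_{sc}(z))}$$
and its imaginary-part counterpart. Combined with the Schlein-type bound~\eqref{eq: abs value lambda main result section} and the crude a priori control $|\Lambda_n|\le C/\sqrt{\kappa+v}$ that Theorem~\ref{th:main} already provides on the range $v\ge A_0 n^{-1}\log^2 n$, this yields an inequality of the form
$$\E|\imag \Lambda_n(z)|^p \le \frac{C^p}{(\kappa+v)^{p}}\,\E|T_n(z)|^p + (\text{lower-order self-improving terms}),$$
so it remains to produce a bound on $\E|T_n(z)|^p$ with a denominator that matches the four target factors.

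Second, I would estimate $\E|T_n(z)|^p$ via the framework of Lemma~\ref{lem: T_n general lemma} together with the resolvent entry bound of Lemma~\ref{main lemma}. The decomposition $T_n=n^{-1}\sum_j \varepsilon_j \RR_{jj}$ reduces the task to moments of the diagonal resolvent entries and of the errors $\varepsilon_j=\varepsilon_{1j}+\varepsilon_{2j}+\varepsilon_{3j}$. In the quadratic-form bound for $\varepsilon_{2j}$ and the martingale/Burkholder bound for $\varepsilon_{3j}$ one may substitute $\imag m_n^{(j)}(z)\le c\sqrt{\kappa+v}+|\imag \Lambda_n(z)|$ wherever an $\imag m_n^{(j)}$ factor appears; this injects the edge factors $(\kappa+v)^{1/4}$ and $(\kappa+v)^{1/2}$ into the respective estimates. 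The four summands of the statement then correspond to (i) the leading deterministic contribution combined with $(z+m_{sc})^{-2}$, giving the $n^{-p}(\kappa+v)^{-p}$ term; (ii) the Rosenthal contribution from $\varepsilon_{2j}$ that uses the crude $\E|\RR_{kl}|^{2p}\le C^p$ and therefore pays a $p^{2p}$ price but keeps only $(\kappa+v)^{-p/2}$; (iii) the martingale contribution from $\varepsilon_{3j}$, for which only two moments are needed and hence only a $p^{p/2}$ factor appears; and (iv) a residual cross term arising from iterating the self-consistent equation, which produces the intermediate $(\kappa+v)^{-p/4}$ scaling.

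The hard part will be the execution of step two under \CondTwo{}. In this regime the configuration-matrix device of Lemma~\ref{main lemma} provides only the crude estimate $\E(|\RR_{kl}|^{2p}\mid \LL)\le C^p$ on the large-entry subrows, so one cannot gain any $p^{-p}$ factor there; this is precisely why the powers of $p$ differ across the four terms of the bound. The delicate task is to check that summation over admissible configurations $\LL$ does not destroy the edge factor $\sqrt{\kappa+v}$ that came from $\imag m_n^{(j)}$, and that the non-admissible configurations, controlled only by $\|\RR\|\le v^{-1}$, contribute a probability small enough to be absorbed. A final bootstrap, iterating the provisional bound of Theorem~\ref{th:main} inside the step-two estimates, is then needed to replace the rough a priori control of $\imag m_n(z)$ by its sharper edge value and to stabilise the resulting inequality.
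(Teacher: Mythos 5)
Your overall strategy---prove a refined moment bound for \(T_n\) that retains factors of \(\imag m_{sc}\), convert it to \(\imag\Lambda_n\) through \(|\imag\Lambda_n|\le C|T_n|/|b(z)|\), and finish with the deterministic asymptotics \(|b(z)|\asymp\sqrt{\kappa+v}\) and \(\imag m_{sc}(z)\asymp v/\sqrt{\kappa+v}\) for \(2\le|u|\le u_0\)---is exactly the paper's route: the refined \(T_n\) bound is the one stated in the remark after Theorem~\ref{th: general bound}, with the quantity \(\mathcal A(4p)\) controlled by~\eqref{eq: main lemma third statement}, and no bootstrap is needed at the end.

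However, two of the concrete inequalities on which you base the computation are wrong in ways that prevent the stated bound from coming out. First, the stability step: the identity is \(\Lambda_n=T_n/(z+m_n+m_{sc})=T_n/b_n(z)\), not \(T_n/\bigl((z+m_n)(z+m_{sc})\bigr)\); since \(|z+m_{sc}|\asymp1\) near the edge, your version yields no edge gain at all, while the correct one, via~\eqref{eq: abs imag lambda main result section}, gains only \(|b(z)|^{-p}\asymp(\kappa+v)^{-p/2}\). Hence your claimed reduction \(\E|\imag\Lambda_n|^p\lesssim(\kappa+v)^{-p}\,\E|T_n|^p\) is not available from this step; the missing half power of \((\kappa+v)\) in the leading term must come from inside the \(T_n\) bound, through \(\imag^p m_{sc}\asymp v^p(\kappa+v)^{-p/2}\). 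Second, and relatedly, in your step two you substitute \(\imag m_n^{(j)}\le c\sqrt{\kappa+v}+|\imag\Lambda_n|\); but \(\sqrt{\kappa+v}\) is the size of \(\imag m_{sc}\) \emph{inside} the bulk, whereas the theorem lives at \(2\le|u|\le u_0\), where \(\imag m_{sc}\asymp v/\sqrt{\kappa+v}\) (as you yourself state at the outset). Using \(\sqrt{\kappa+v}\) there destroys precisely the edge gain: the leading term of the \(T_n\) bound becomes of order \(p^p(\kappa+v)^{p/2}(nv)^{-p}\), and after dividing by \(|b(z)|^p\) you are left with \(p^p(nv)^{-p}\), which already at \(\kappa\asymp1\) is far weaker than the claimed \(p^p n^{-p}(\kappa+v)^{-p}\). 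To close the argument you must carry \(\imag m_{sc}\) (and, for the fluctuation part, \(\E\imag^q\RR_{jj}\le H_0^q\bigl[\imag^q m_{sc}+q^q(nv)^{-q}\bigr]\) from Lemma~\ref{main lemma}) through the entire \(T_n\) estimate and only at the very end insert \(\imag m_{sc}\asymp v/\sqrt{\kappa+v}\) together with \(|b(z)|\asymp\sqrt{\kappa+v}\).
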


Let us define the  quantile  position of the \(j\)-th eigenvalue by
\begin{eqnarray*}
\gamma_j: \quad \int_{-\infty}^{\gamma_j} g_{sc}(\lambda) \, d\lambda = \frac{j}{n}, \quad 1 \le j \le n.
\end{eqnarray*}
The following results give the bounds for the fluctuations of \(\lambda_j\) around \(\gamma_j\). 
\begin{corollary}\label{th: rigidity}
Assume that the conditions \(\Cond\) hold and let \(K>0\) be an integer. Then
\begin{itemize}
\item (bulk) Let \(j \in [\log n, n - \log n+1]\) . For any \(Q > 0\) there exists positive constant \(C\) such that with probability at least \(1 - n^{-Q}\):
\begin{eqnarray*}
|\lambda_j - \gamma_j| \leq C_1 \log^{12} n [\min(j, n- j+1)]^{-\frac13} n^{-\frac23}. 
\end{eqnarray*}
\item (edge) Let \(j \le \log n\) or \( j \geq n - \log n + 1\). There exists positive constant \(C\) such that with probability at least \(1 -  \log^{-1} n \): 
\begin{eqnarray*}
|\lambda_j - \gamma_j| \le C \log^9 n [\min(j, n- j+1)]^{-\frac13} n^{-\frac23}.
\end{eqnarray*}
\end{itemize}
\end{corollary}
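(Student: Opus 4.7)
My plan is to follow the standard Erd\H{o}s--Schlein--Yau route for extracting rigidity from a local semicircle law, splitting the statement into a bulk regime and an edge regime, and feeding Corollary~\ref{th: rate of convergence} into the former and Theorem~\ref{th: stronger bound for imag part} into the latter, exactly as in~\cite{GotzeNauTikh2015b} and~\cite{GotzeNauTikh2016a}.

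\textbf{Bulk.} Introduce the counting functions $N_n(x) := \#\{j : \lambda_j \le x\}$ and $N_{sc}(x) := n\int_{-\infty}^x g_{sc}(\lambda)\,d\lambda$. Corollary~\ref{th: rate of convergence} gives $\sup_{x\in\R}|N_n(x) - N_{sc}(x)| \le C\log^{12} n$ on an event of probability $\ge 1 - n^{-Q}$. For $j$ in the stated range the classical position $\gamma_j$ lies in the bulk and a direct calculation yields $g_{sc}(\gamma_j) \asymp (\min(j, n-j+1)/n)^{1/3}$. Setting $\delta_j := C_1 \log^{12} n \cdot \min(j, n-j+1)^{-1/3}\, n^{-2/3}$ and integrating $g_{sc}$ over $[\gamma_j, \gamma_j + \delta_j]$ gives $N_{sc}(\gamma_j + \delta_j) - N_{sc}(\gamma_j) \ge c' C_1 \log^{12} n$, which for $C_1$ large dominates the Kolmogorov error, forcing $\lambda_j \le \gamma_j + \delta_j$. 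The symmetric lower bound $\lambda_j \ge \gamma_j - \delta_j$ follows in the same way when $\gamma_j - \delta_j$ still lies in the bulk; when $\gamma_j - \delta_j$ falls below $-2$ one combines the Kolmogorov bound with a rigidity bound on $\lambda_1$ obtained from Theorem~\ref{th: stronger bound for imag part} at $\kappa \asymp \log^{12} n \cdot n^{-2/3}$, where Markov's inequality at $p \asymp \log n$ restores the probability $1 - n^{-Q}$.

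\textbf{Edge.} For $j \le \log n$ (the other edge is symmetric) one must rule out eigenvalues at distance $\gtrsim \log^{9} n \cdot n^{-2/3}$ outside $[-2,2]$. The key elementary observation is
\begin{equation*}
\imag m_n(u+iv) = \frac{1}{n}\sum_k \frac{v}{(\lambda_k - u)^2 + v^2} \ge \frac{1}{2nv}\,\mathbf{1}\bigl\{\min_k|\lambda_k - u| \le v\bigr\}.
\end{equation*}
Take $u = 2 + \kappa$ with $\kappa$ a fixed multiple of $\log^{9} n \cdot n^{-2/3}$ and $v$ strictly smaller than $\kappa$; expanding the explicit formula for $m_{sc}$ gives $\imag m_{sc}(u+iv) \asymp v/\sqrt{\kappa}$. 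Choose $v$ in the window $[A_0 n^{-1}\log^{2} n,\, c\,\kappa^{1/4} n^{-1/2}]$ and $p \asymp \log n$ so as to balance the four terms in the bound of Theorem~\ref{th: stronger bound for imag part}; Markov's inequality then yields $|\imag\Lambda_n(u+iv)| < 1/(4nv)$, whence $v\cdot\imag m_n(u+iv) < 1/(2n)$, on an event of probability at least $1 - \log^{-1} n$ (the weaker probability reflecting truncation costs under the sole moment hypothesis~\(\Cond\)). This precludes any eigenvalue in $[u-v, u+v]$. A union bound over an $O(\log n)$-size dyadic net of $u \in [2 + c_0\log^{9} n \cdot n^{-2/3},\, u_0]$ for a fixed $u_0 > 2$, together with a standard argument ruling out $\lambda_n > u_0$, forces $\lambda_n \le 2 + C\log^{9} n \cdot n^{-2/3}$. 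Combining with the bulk estimate applied at $j = \lceil\log n\rceil$ and monotonicity of the $\lambda_j$ yields the $j$-dependent edge bound.

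The main obstacle I anticipate is the balancing in the edge argument: one must pick $v$ small enough that each of the four error terms in Theorem~\ref{th: stronger bound for imag part} (and also $\imag m_{sc}(u+iv)$) is controlled by $(4nv)^{-1}$, yet large enough that $v \ge A_0 n^{-1}\log^{2} n$, while simultaneously choosing $p \le A_1 \log n$ so that Markov's inequality delivers the stated probability. Careful bookkeeping of the logarithmic factors through this optimization is what should produce the exponent $9$ in $\log^{9} n$ at the edge rather than a larger power. The bulk step, by contrast, reduces immediately to Corollary~\ref{th: rate of convergence} together with an elementary inversion of $N_{sc}$.
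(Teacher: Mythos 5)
Your overall route is the same as the paper's: the bulk is extracted from Corollary~\ref{th: rate of convergence} (the Kolmogorov bound \(\Delta_n^*\le C\log^{12}n/n\)) combined with inversion of \(G_{sc}\) using the \((j/n)^{2/3}\) behaviour of the quantiles in~\eqref{eq: quantiles of semicircle law 1} (the paper phrases this via the Smirnov-transform identity for \(\lambda_j\), you phrase it via counting functions -- these are equivalent), and the edge is reduced to excluding eigenvalues outside \([-2,2]\) by means of Theorem~\ref{th: stronger bound for imag part} after truncation, which is exactly the content of the paper's appeal to \cite{ErdKnowYauYin2013}[Theorems 7.3, 7.6]; your lower bound \(\imag m_n(u+iv)\ge (2nv)^{-1}\one\{\min_k|\lambda_k-u|\le v\}\) plus a net and Markov at \(p\asymp\log n\) is precisely the mechanism behind that reference.

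There is, however, one genuine gap in the bulk part. When \(\gamma_j-\delta_j<-2\) (which happens for \(\log n\le j\lesssim\log^{12}n\), since \(\gamma_j+2\asymp(j/n)^{2/3}\) while \(\delta_j\asymp\log^{12}n\,j^{-1/3}n^{-2/3}\)), you propose to close the lower bound \(\lambda_j\ge\gamma_j-\delta_j\) by controlling \(\lambda_1\) through Theorem~\ref{th: stronger bound for imag part} and assert that Markov's inequality ``restores the probability \(1-n^{-Q}\)''. This does not work as stated under \(\Cond\): Theorem~\ref{th: stronger bound for imag part} is proved only under the truncated conditions \(\CondTwo\), and the passage from \(\Cond\) to \(\CondTwo\) (replacing \(\W\) by \(\breve\W\)) is exactly the step the paper identifies as producing ``very poor probability bounds (of order \(\log^{-1}n\))'' -- this is the sole reason the edge bullet of the corollary carries probability \(1-\log^{-1}n\) rather than \(1-n^{-Q}\). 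Consequently your bulk argument, as written, only delivers probability \(1-\log^{-1}n\) in the range \(j\lesssim\log^{12}n\); the paper avoids importing Theorem~\ref{th: stronger bound for imag part} into the bulk bullet and confines all such uses (via \(\Pb(\lambda_1\le-2-(K/n)^{2/3})\)) to the edge case, following \cite{GotzeNauTikh2015b}[Theorem 1.3]. Two smaller quantitative points at the edge: with \(\kappa\asymp\log^9 n\,n^{-2/3}\) the choice \(v=A_0n^{-1}\log^2n\) from your stated window makes the second term of Theorem~\ref{th: stronger bound for imag part} explode after multiplying by \((nv)^p\) (one needs \(nv\sqrt{\kappa}\gg p^2\), i.e.\ \(v\gtrsim n^{-2/3}\log^{-3}n\)), and the net over \(u\) is of polylogarithmic size per dyadic scale rather than \(O(\log n)\) overall; both are harmless once the bookkeeping is done, but they are part of the ``balancing'' you flag and should be carried out explicitly.
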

For the detailed proof see~\cite{GotzeNauTikh2015b}[Theorem 1.3] making minor changes. For the bulk of the spectrum we mainly use the following formula 
\begin{eqnarray*}
\lambda_j = G_{sc}^{-1}\left(\frac{j}{n} \right) + \E_\tau \frac{2\pi \theta \Delta_n^{*}}{\sqrt{4 - \left(G_{sc}^{-1}\left(\frac{j}{n} + \theta \Delta_n^{*} \right) \right)^2}};
\end{eqnarray*}
see proof of~\cite{GotzeNauTikh2015b}[Theorem 1.3]. Here, \(G_{sc}(x) := \mu_{sc}((-\infty, x])\). Taking into account that 
\begin{eqnarray}\label{eq: quantiles of semicircle law 1}
&&c_1 x^\frac23 \le 2+ G_{sc}^{-1}(x) \le c_2 x^\frac23 \quad \text{ for } x \in \big[0, 1/2 \big], \nonumber\\
&&c_1 (1-x)^\frac23 \le 2- G_{sc}^{-1}(x) \le c_2 (1-x)^\frac23 \quad \text{ for } x \in \big[1/2, 1\big], 
\end{eqnarray}
and Corollary~\ref{th: rate of convergence} one may obtain the estimates for the bulk of the spectrum. 
Clearly, the factor \(\log^{12} n\) comes from the bound for the \(\Delta_n^{*}\). The proof for the edge of the spectrum requires more involved technique. In particular, following~\cite{ErdKnowYauYin2013}[Theorem 7.6] we write
\[
\Pb(|\lambda_j - \gamma_j| \geq l) \le \Pb(|\lambda_j - \gamma_j| \geq l, \lambda_j > \gamma_j) + \Pb(|\lambda_j - \gamma_j| \geq l, \lambda_j < \gamma_j),
\]
where \(l: = C K j^{-1/3} n^{-2/3}\) for some \(C>0\).  The first case when \(\lambda_j > \gamma_j\) is trivial since in this situation \(\lambda_j > j_1 \geq -2 + c_1 n^{-2/3}\) (see~\eqref{eq: quantiles of semicircle law 1}) and we may repeat the calculations for the case of the bulk to get
$$
\Pb(|\lambda_j - \gamma_j| \geq l, \lambda_j > \gamma_j) \le n^{-Q}. 
$$
Applying~\eqref{eq: quantiles of semicircle law 1} we obtain $\gamma_j \le -2 + c_2\left(\frac{j}{n}\right)^\frac13 $. This enables to write the estimate
\begin{eqnarray}\label{operator norm} 
\Pb(|\lambda_j - \gamma_j| \geq l, \lambda_j < \gamma_j) \le \Pb \big(\lambda_1 \le -2 - (K/n)^{2/3}\big).
\end{eqnarray}
Estimation of the r.h.s. of the previous inequality requires to use truncation technique leading to very poor probability bounds (of order \(\log^{-1} n\)). Namely, we need to replace \(\W\) satisfying \(\Cond\) by the corresponding matrix \(\breve \W\) satisfying \(\CondTwo\). To estimate the r.h.s. of~\eqref{operator norm} one may follow~\cite{ErdKnowYauYin2013}[Theorem 7.3] and use  Theorem~\ref{th: stronger bound for imag part}.

\subsubsection{Delocalization of eigenvectors}
Let us denote by \(\uu_j := (u_{j 1}, ... , u_{j n})\) the eigenvectors of \(\W\) corresponding to the eigenvalue \(\lambda_j\). The following theorem is the direct corollary of Lemma~\ref{main lemma}.
\begin{corollary}\label{th: delocalization}
Assume that  conditions \(\Cond\) hold. There exist positive constant \(C\) such that with probability at least \( 1- \log^{-1} n\): 
\begin{eqnarray*}
\max_{1 \le j, k \le n} |u_{jk}| \leq C \sqrt{\frac{\log n}{n}}.
\end{eqnarray*}
\end{corollary}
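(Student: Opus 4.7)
The plan is to reduce the eigenvector bound to a uniform diagonal resolvent estimate via the spectral decomposition identity
\[
\imag \RR_{jj}(u+iv) \;=\; \sum_{\ell=1}^n \frac{v\, |u_{\ell j}|^2}{(\lambda_\ell - u)^2 + v^2},
\]
which, restricted to the single $\ell=k$ term with $u=\lambda_k$, immediately yields $|u_{kj}|^2 \le v\,\imag \RR_{jj}(\lambda_k+iv)$ for every $v>0$. It therefore suffices to produce a uniform bound of the form $|\RR_{jj}(z)|\le C$ on a strip covering the spectrum and then to take $v$ as small as the available theory permits.

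I would first invoke Lemma~\ref{main lemma}, whose role in this paper is precisely to provide a moment estimate of the form $\E|\RR_{jj}(z)|^p \le C_0^p$ for all $1 \le p \le A_1\log n$, $|u|\le 2+v$, and $v \ge v_\star := A_0 n^{-1}\log^2 n$. A Markov bound with $p\asymp \log n$ then gives $|\RR_{jj}(z)|\le 2C_0$ at any fixed $z$ with probability $1-n^{-Q}$, for any prescribed $Q>0$. To make this pointwise control uniform, I would place a polynomial-in-$n$ grid on the rectangle $\mathcal{D}:=\{u+iv : |u|\le 2+v_\star,\ v_\star \le v \le 1\}$, transport the estimate through the grid using the elementary Lipschitz bounds $|\partial_u \RR_{jj}|,\, |\partial_v \RR_{jj}| \le v^{-2}$, and then union-bound over grid points together with $j\in\{1,\dots,n\}$ to obtain
\[
\max_{1\le j\le n}\ \sup_{z \in \mathcal{D}}\, |\imag \RR_{jj}(z)| \;\le\; C_1
\]
with overwhelming probability.

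To transfer this to every eigenvector I would couple it with the edge rigidity provided by Corollary~\ref{th: rigidity}: with probability at least $1-\log^{-1} n$ every $\lambda_k$ lies within a distance $\varepsilon_n$ of $[-2,2]$ that is negligible compared to $v_\star$ (after a harmless enlargement of constants). For each $\lambda_k$ I would then apply the spectral identity at $z = \lambda_k^\star + iv_\star$, where $\lambda_k^\star$ is the projection of $\lambda_k$ onto the horizontal range $[-2-v_\star,\,2+v_\star]$ permitted by Lemma~\ref{main lemma}. Since $(\lambda_k-\lambda_k^\star)^2+v_\star^2 \le 2v_\star^2$, isolating the $k$-th term in the spectral sum gives $|u_{kj}|^2 \le 2v_\star\,\imag \RR_{jj}(\lambda_k^\star+iv_\star)\le 2C_1 v_\star$, and taking the square root reproduces the desired delocalization estimate up to the precise power of $\log n$.

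The principal obstacle is the truncation step: under $\Cond$ only a fourth moment is available, so the reduction to $\CondTwo$ (needed to invoke Lemma~\ref{main lemma}) goes through Lemmas~\ref{appendix: lemma trunc 1}--\ref{appendix: lemma trunc 2}, which carry the unavoidable probability cost of order $\log^{-1} n$; this is exactly why the corollary states such a weak probability guarantee rather than a polynomial one. A secondary point is the treatment of the extreme eigenvalues, which sit slightly outside $[-2,2]$ and hence outside the domain $|u|\le 2+v$ of Lemma~\ref{main lemma}; the rigidity input is what makes the horizontal shift from $\lambda_k$ to $\lambda_k^\star$ lossless.
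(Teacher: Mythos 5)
Your overall route is the paper's route: the spectral decomposition of \(\imag \RR_{jj}\) (the paper phrases it via \(F_{nj}(x)=\sum_k |u_{jk}|^2\one[\lambda_k\le x]\) and \(\max_k|u_{jk}|^2\le 2\lambda\sup_u\imag\RR_{jj}(u+i\lambda)\)), combined with the uniform diagonal resolvent bound of Lemma~\ref{main lemma} after truncating from \(\Cond\) to \(\CondTwo\), the truncation being responsible for the \(1-\log^{-1}n\) probability. However, two steps fail as written. First, you take \(v_\star=A_0n^{-1}\log^2 n\), i.e.\ the \(\alpha=2\) region \(\mathbb D_2\) needed for Theorem~\ref{th:main}; this yields \(|u_{jk}|^2\le C v_\star\asymp n^{-1}\log^2 n\), hence only \(|u_{jk}|\le C\log n/\sqrt n\), which is weaker than the stated \(C\sqrt{\log n/n}\). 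The paper insists on \(\alpha=1\), i.e.\ \(v\ge A_0 n^{-1}\log n\) with \(p\le A_1\log n\), precisely because this is what produces the optimal power of the logarithm; your parenthetical ``up to the precise power of \(\log n\)'' is exactly the gap, and it is closed simply by invoking Lemma~\ref{main lemma} on \(\mathbb D_1\).

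Second, your treatment of the extreme eigenvalues is broken. You restrict the horizontal range to \(|u|\le 2+v_\star\) (a misreading: Lemma~\ref{main lemma} is stated on \(\mathbb D_\alpha\), i.e.\ \(|u|\le u_0\) for an \emph{arbitrary} fixed \(u_0\)), and then claim that Corollary~\ref{th: rigidity} places every eigenvalue within a distance of \([-2,2]\) that is negligible compared with \(v_\star\). It does not: at the edge the rigidity scale is \(C n^{-2/3}\log^9 n\), which is far larger than \(v_\star\asymp n^{-1}\log^2 n\) (and the top eigenvalue genuinely fluctuates at scale \(n^{-2/3}\)). For such a \(\lambda_k\) your projection step gives \(|u_{kj}|^2\le \big((\lambda_k-\lambda_k^\star)^2+v_\star^2\big)v_\star^{-1}\,\imag\RR_{jj}\), and the factor \((\lambda_k-\lambda_k^\star)^2/v_\star^2\) can be of order \(n^{2/3}\) up to logarithms, destroying the bound exactly for the edge eigenvectors. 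The repair is to take \(u_0>2\) fixed (say \(u_0=3\)) so that, on the rigidity (or a crude norm) event of probability \(1-\log^{-1}n\), every \(\lambda_k\) lies inside the admissible window \(|u|\le u_0\) and no projection loss occurs. A minor further point: with \(p\le A_1\log n\) and \(A_1\) a fixed (possibly small) constant, Markov's inequality at threshold \(2C_0\) gives probability \(n^{-cA_1}\), not \(n^{-Q}\) for arbitrary \(Q\); to run your grid union bound you must let the threshold constant depend on \(Q\), which is harmless but should be said.
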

Comparison with a similar result for the  {\it Gaussian Orthogonal Ensembles} (GOE) (see~\cite{AndersonZeit}[Corollary 2.5.4]) shows that this result is optimal with respect to the power of logarithm. For the proof see~\cite{GotzeNauTikh2016a}[Theorem 1.4], replacing~\cite{GotzeNauTikh2016a}[Lemma 3.1] with Lemma~\ref{main lemma}(with \(\alpha = 1\)). For the readers convenience we give an idea of the proof. We introduce the following distribution function
\[
F_{nj}(x): = \sum_{k=1}^n |u_{jk}|^2 \one[\lambda_k(\W)\le x].
\]
Using the eigenvalue decomposition of $\W$ it is easy to see that
\[
\RR_{jj}(z) = \sum_{k=1}^n\frac{|u_{jk}|^2}{\lambda_k(\W) - z} =  \int_{-\infty}^\infty \frac{1}{x - z} \, d F_{nj}(x).
\]
For any $\lambda > 0$ we have
\begin{eqnarray*}
\max_{1 \le k \le n} |u_{jk}|^2 \le \sup_x (F_{nj}(x + \lambda) - F_{nj}(x)) \le 2 \sup_u \lambda \imag \RR_{jj}(u + i\lambda).
\end{eqnarray*}
Estimation of the r.h.s. of the previous inequality requires again to use truncation technique leading to very poor probability bounds. 
Similarly to the edge case of Corollary~\ref{th: rigidity} we replace \(\W\) satisfying \(\Cond\) by the corresponding matrix \(\breve \W\) satisfying \(\CondTwo\), and apply Lemma~\ref{main lemma}(with \(\alpha = 1\)). Replacing conditions \(\Cond\) by \(\CondTwo\) one may improve the estimate. 

\section{Proof of the main result}\label{proof of the main result}
We start this section with the recursive representation of the diagonal entries of the resolvent \(\RR(z) := (\W - z \I)^{-1}\).  As noted before we shall systematically use for any matrix \(\W\) together with its resolvent \(\RR\), Stieltjes transform \(m_n\) and etc. the corresponding quantities \(\W^{(\J)}, \RR^{(\J)}, m_n^{(\J)}\) and etc. for the corresponding sub matrix with entries \(X_{jk}, j,k \in \T\setminus \J\). Here \(\T: = \{1, \ldots, n\}\) and \(\J \subset \T\). We will often omit the argument \(z\) from \(\RR(z)\) and write \(\RR\) instead. We may express \(\RR_{jj}\) in the following way
\begin{eqnarray}\label{eq: R_jj representation 0}
\RR_{jj} = \frac{1}{-z + \frac{X_{jj}}{\sqrt n} - \frac{1}{n}\sum_{l,k \in T_j} X_{jk} X_{jl} \RR_{kl}^{(j)}}.
\end{eqnarray}
Let \(\varepsilon_j : = \varepsilon_{1j} + \varepsilon_{2j}+\varepsilon_{3j}+\varepsilon_{4j}\), where
\begin{eqnarray}\label{vareps def}
\varepsilon_{1j} &:=&  \frac{1}{\sqrt n}X_{jj}, \quad \varepsilon_{2j} := -\frac{1}{n}\sum_{l \ne k \in T_j} X_{jk} X_{jl} \RR_{kl}^{(j)}, \quad \varepsilon_{3j} := -\frac{1}{n}\sum_{k \in T_j} (X_{jk}^2 -1) \RR_{kk}^{(j)}, \nonumber\\
\varepsilon_{4j}&:=& \frac{1}{n} (\Tr \RR - \Tr \RR^{(j)}).
\end{eqnarray}
Using these notations we may rewrite~\eqref{eq: R_jj representation 0} as follows
\begin{eqnarray}\label{eq: R_jj representation}
\RR_{jj} = -\frac{1}{z + m_n(z)} + \frac{1}{z + m_n(z)} \varepsilon_j \RR_{jj}.
\end{eqnarray}
Introduce
\begin{eqnarray}\label{eq:Lambda_b_bn}	
\Lambda_n: = m_n(z) - m_{sc}(z), \quad b(z): = z + 2 m_{sc}(z), \quad b_n(z) = b(z) + \Lambda_n,
\end{eqnarray}
and
\begin{eqnarray}\label{definition of T}
T_n: = \frac{1}{n} \sum_{j=1}^n \varepsilon_j \RR_{jj}.
\end{eqnarray}
Applying~\eqref{eq: R_jj representation}  we arrive at  the following representation for \(\Lambda_n\) in terms of \(T_n\) and \(b_n\)
\begin{eqnarray*}
\Lambda_n = \frac{T_n}{z + m_n(z) + m_{sc}(z)} = \frac{T_n}{b_n(z)}.
\end{eqnarray*}
It was proved in~\cite{Schlein2014}[Proposition 2.2] (see also~\cite{GotzeNauTikh2015a}[Lemma B.1]) that for all \(v > 0\) and \(|u| \le 2+v\)
(using the quantities \eqref{eq:Lambda_b_bn})
\begin{eqnarray}\label{eq: abs value lambda main result section}
|\Lambda_n| \le C \min\left\{\frac{|T_n|}{|b(z)|}, \sqrt{|T_n|}\right\}.
\end{eqnarray}
Moreover, for all \(v>0\) and \(|u| \le u_0\)
\begin{eqnarray}\label{eq: abs imag lambda main result section}
|\imag\Lambda_n| \le C \min\left\{\frac{|T_n|}{|b(z)|}, \sqrt{|T_n|}\right\}.
\end{eqnarray}
It is easy to check that \(b(z) = \sqrt{z^2-4}\) and moreover, there exist constants \(c, C>0\) such that \(c\sqrt{\kappa + v} \le |b(z)| \le C\sqrt{\kappa + v}\) for all \(|u| \le u_0\), \(0 < v \le V\), where \(\kappa\) is defined in the section~\ref{rigidity sec}; see e.g.~\cite{ErdKnowYauYin2013}[Lemma 4.3]. 
Hence, in order to bound \(\E |\Lambda_n|^p\) (or \(\E|\imag \Lambda|^p\) respectively) it is enough to control \(\E |T_n|^p\).

Let us introduce the following region in the complex plane:
\begin{eqnarray}\label{eq: definition reginon D}
\mathbb D_2: = \{z = u+iv \in \C: |u| \le u_0, V \geq v \geq v_0: = A_0 n^{-1} \log^2 n \},
\end{eqnarray}
where \(u_0, V\) are arbitrary fixed positive real numbers and \(A_0\) is some large constant defined below.

The following theorem provides a bound for \(\E |T_n|^p\) for all \(z \in \mathbb D_2\) in terms of diagonal resolvent entries. 
\begin{theorem}\label{th: general bound}
Assume that the conditions \(\CondTwo\) hold and \(u_0 > 2\) and \(V > 0\). There exist  positive constants \(A_0, A_1\) and \(C\)
depending on  \(u_0, V\) and \(\beta_4\) such that for all \(z \in \mathbb D_2\) we have
\begin{eqnarray}\label{eq: general bound}
\E|T_n|^p \le \frac{C^p p^p |b(z)|^{p}}{(nv)^p} +  \frac{C^p p^{2p}}{(nv)^{2p}},
\end{eqnarray}
where \(1 \le p \le A_1 \log n\).
\end{theorem}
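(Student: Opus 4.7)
The plan is to bound $\E|T_n|^p$ by exploiting that, once the dependence of $\RR_{jj}$ on the $j$-th row is separated out, the sum $nT_n=\sum_{j=1}^{n}\varepsilon_j\RR_{jj}$ becomes (approximately) a sum of conditionally centred, nearly independent contributions. For such sums a Rosenthal-type inequality gives a much sharper bound than the crude Cauchy--Schwarz estimate~\eqref{T n crude bound sketch}, and the two-term structure of~\eqref{eq: general bound} will appear as the two terms produced by that inequality.

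First I would decompose $\varepsilon_j=\varepsilon_{1j}+\varepsilon_{2j}+\varepsilon_{3j}+\varepsilon_{4j}$ as in~\eqref{vareps def}. The interlacing bound yields $|\varepsilon_{4j}|\le 1/(nv)$ deterministically, so this piece contributes to $T_n$ at the target scale via $|\RR_{jj}|\le 1/v$. For the remaining three pieces, each of $\varepsilon_{1j},\varepsilon_{2j},\varepsilon_{3j}$ has conditional mean zero with respect to $\mathcal F_j:=\sigma(\W^{(j)})$. To expose this cancellation I would apply~\eqref{eq: R_jj representation} and split
\[
nT_n \;=\; \sum_{j=1}^n \varepsilon_j\,\RRj_{jj} \;+\; \sum_{j=1}^n \varepsilon_j\bigl(\RR_{jj}-\RRj_{jj}\bigr),
\]
where $\RRj_{jj}$ is an $\mathcal F_j$-measurable proxy for $\RR_{jj}$ built from $(z+m_n^{(j)})^{-1}$ (up to the deterministic $\varepsilon_{4j}$ correction). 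The difference $\RR_{jj}-\RRj_{jj}$ is of order $|\varepsilon_j|\,|\RR_{jj}|\,|\RRj_{jj}|$ by the recursion~\eqref{eq: R_jj representation}, so the second sum is quadratic in $\varepsilon_j$ and will be estimated by the crude inequality~\eqref{T n crude bound sketch} together with the bound $\E|\RR_{jj}|^p\le C^p$ supplied by Lemma~\ref{main lemma}.

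Next I would feed the first sum into Lemma~\ref{lem: T_n general lemma}, the general moment inequality for statistics of independent random variables announced in the sketch. Schematically it produces, for any $\alpha\in[1,2]$,
\[
\E^{1/p}\Bigl|\sum_{j=1}^n \eta_j\Bigr|^p \le C p\Bigl(\sum_{j=1}^n \E|\eta_j|^{\alpha}\Bigr)^{1/\alpha} + C p\,\max_{1\le j\le n} \E^{1/p}|\eta_j|^p,
\]
applied to $\eta_j=\varepsilon_j\RRj_{jj}$. The key inputs are moment bounds on $|\varepsilon_j|$. The $L^2$-estimates $\E|\varepsilon_{kj}|^2\le C/(nv)$ for $k=1,2,3$ are standard under \CondTwo; the fourth-moment hypothesis $\beta_4<\infty$ together with the quadratic-form bounds (Theorem~A.1--A.2 of~\cite{GotzeNauTikh2015a}, cf.~\cite{GotzeNauTikh2018TVP}) supplies $\E|\varepsilon_{kj}|^4$ at the correct scale, and interpolation gives $\E|\varepsilon_{j}|^\alpha$ for all $\alpha\in[1,2]$. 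Combined with $|\RRj_{jj}|\lesssim |b(z)|^{-1}$ (on a high-probability event where $\Lambda_n$ is small), the ``sum-of-$\alpha$-moments'' term produces the $|b(z)|^p(nv)^{-p}$ contribution in~\eqref{eq: general bound}, while the ``maximum $p$-th moment'' term produces the $(nv)^{-2p}$ contribution.

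The main obstacle is the quadratic remainder $\sum_j \varepsilon_j(\RR_{jj}-\RRj_{jj})$, because it is in precisely the setting that defeats the straightforward approach under \CondTwo: the truncation level is only $\sqrt n/\overline R$ with $\overline R$ logarithmic, hence $\beta_p\le \beta_4\, n^{p/2-2}\overline R^{4-p}$ grows fast in $p$, and a naive application of~\eqref{T n crude bound sketch} does not give the required bound. The admissible-configuration mechanism of Lemma~\ref{main lemma}, which yields $\E|\RR_{jk}|^{2p}\le C^p$ uniformly in $z\in\mathbb D_2$ and $p\le A_1\log n$, is exactly what closes this estimate: it allows to trade moments of the heavy rows/columns of $\X$ for moments of $\RR$, and it is precisely the reason why the range $p\le A_1\log n$ appears as the natural limitation in~\eqref{eq: general bound}.
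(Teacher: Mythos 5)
Your skeleton is the same as the paper's (the decomposition of $\varepsilon_j$, the $\mathfrak M^{(j)}$-measurable proxy $\widehat f_j=-(z+m_n^{(j)}(z))^{-1}$, Lemma~\ref{lem: T_n general lemma}, and Lemma~\ref{main lemma} as the source of resolvent-moment bounds), but the execution has a genuine gap at exactly the point the paper singles out as the main difficulty. Your quadratic remainder $\frac1n\sum_j\xi_j(\RR_{jj}-\widehat f_j)=-\frac1n\sum_j\xi_j^2\,\widehat f_j\,\RR_{jj}$ is precisely the term $\mathcal A$, i.e.\ the right-hand side of~\eqref{important}, and you propose to close it with the crude Cauchy--Schwarz bound~\eqref{T n crude bound sketch} together with $\E|\RR_{jj}|^{2p}\le C^p$. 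That route needs high moments $\E|\xi_j|^{2p}$ (or at least $\E_j|\xi_j|^4$), and under \CondTwo{} these are not small: with truncation only at $\sqrt n/\overline R$, $\overline R$ logarithmic, one has $\beta_p\le\beta_4 n^{p/2-2}\overline R^{4-p}$, and the quadratic-form contribution to $\E|\varepsilon_{2j}|^{2p}$ is of size $C^pp^{2p}\overline R^{-2p+8}$; even after conditioning on an admissible configuration the best one gets is $\E|\xi_j|^{2p}\le C^p$. A $C^p$ bound for this piece is useless, because the right-hand side of~\eqref{eq: general bound} is typically much smaller than $1$ (take $u=2$, $v=n^{-1/2}$, $p\asymp\log n$: then $p^p|b(z)|^p(nv)^{-p}\asymp(C\log n)^p n^{-3p/4}$). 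The paper's resolution is not a sharper moment bound for $\xi_j$ but the splitting of~\eqref{important} along $\{|\RR_{jj}|\le H_0\}$ versus $\{|\RR_{jj}|> H_0\}$: on the good event only $\E_j|\xi_j|^2$ is needed (which is available without truncation losses), and the bad event has probability $\le n^{-c_1\log n}$ by~\eqref{eq: RR condition prob}, which in turn requires Lemma~\ref{main lemma} at moment order $q\asymp\log^2 n$ --- this is the real reason the theorem lives on $\mathbb D_2$ (i.e.\ $v\ge A_0n^{-1}\log^2 n$), not the reason you give.

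Two further points would also break your write-up as stated. First, for the $\varepsilon_{4j}$ part you use $|\varepsilon_{4j}|\le(nv)^{-1}$ and $|\RR_{jj}|\le v^{-1}$, which gives $(nv^2)^{-1}$ per power --- far above the target near $v_0$; even replacing the sup bound by $\E|\RR_{jj}|^p\le C^p$ only yields $C^p(nv)^{-p}$, which is \emph{not} dominated by the right-hand side of~\eqref{eq: general bound} when $|b(z)|$ is small and $nv$ is large. The paper instead uses~\eqref{eq: distance between traces} to write $\frac1n\sum_j\varepsilon_{4j}\RR_{jj}=\frac1{n^2}\Tr\RR^2$, bounds it by $(nv)^{-1}\imag m_n(z)$, and then splits $\imag m_n\le\imag m_{sc}+\imag\Lambda_n$ with $\imag m_{sc}\le|b(z)|$ and $\imag\Lambda_n\le C|T_n|^{1/2}$ absorbed by Young's inequality. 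Second, your paraphrase of Lemma~\ref{lem: T_n general lemma} as a two-term Rosenthal-type inequality is not what the lemma says: its terms $\mathcal B,\mathcal C,\mathcal D$ involve $T_n-\tTj$, and after the representation of $T_n-\tTj$ (via $\Lambda_n-\widetilde\Lambda_n^{(j)}$, $K^{(j)}$ and the quadratic form $\eta_j$) every one of $\mathcal R,\mathcal A,\mathcal B,\mathcal C,\mathcal D$ produces a term $\rho\,\E|T_n|^p$ that must be absorbed into the left-hand side; your proposal has no such absorption step, and an off-the-shelf Rosenthal bound is not applicable anyway, since the summands $\varepsilon_j\widehat f_j$ are neither independent nor martingale differences (the $\sigma$-algebras $\sigma(\W^{(j)})$ are not nested). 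Finally, $|\widehat f_j|$ is of order $|m_{sc}(z)|\le 1$, and the factor $|b(z)|^p$ in~\eqref{eq: general bound} comes from $\E_j|\varepsilon_{2j}|^2\le C\,\imag m_n^{(j)}/(nv)$ together with $\imag m_{sc}(z)\le|b(z)|$, not from a bound $|\widehat f_j|\le C|b(z)|^{-1}$, which points in the wrong direction.
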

\begin{remark}
To prove Theorem~\ref{th: stronger bound for imag part} one need more stronger bound for \(\E|T_n|^p\) than~\eqref{eq: general bound}. Minor changes in the proof of Theorem~\ref{th: general bound} will lead to the following estimate
\begin{eqnarray*}
\E|T_n|^p \le \frac{C^p p^p\mathcal A^p(4 p)}{(nv)^p}   +  \frac{ C^p p^{2p}}{(nv)^{2p}}  + \frac{C^p p^{p/2}|b(z)|^\frac{p}{2}\mathcal A^\frac{p}{2}(4 p)}{(nv)^p },
\end{eqnarray*}
where \(\mathcal A(q): = \max \big(\max_{j=1, \ldots, n} \E^{\frac{1}{q}} \imag^q \RR_{jj}, \imag m_{sc}(z) \big)\). This estimate is sufficient for our purposes. The term \( \mathcal A(q)\) may be estimate due to Lemma~\ref{eq: main lemma first statement}. We omit the details.  
\end{remark}
The proof of Theorem~\ref{th: general bound} is one of the crucial steps in the proof of the main result and will be given in the next section. We finish this section with the proof of Theorems~\ref{th:main} and~\ref{th: stronger bound for imag part}.

\begin{proof}[Proof of Theorem~\ref{th:main}]
To estimate \(\E|\imag \Lambda_n|^p\) we may choose one of the bounds~\eqref{eq: abs imag lambda main result section},
depending on whether \(z\) is near the edge of the spectrum or away from it. If \(|b(z)| \geq C p (nv)^{-1}\) then we may take the bound \( |\imag \Lambda_n| \le C |T_n|/|b(z)|\) and obtain
\begin{eqnarray*}
\E|\imag \Lambda_n|^p \le \frac{C^p \E |T_n|^p}{|b(z)|^p}\le \bigg(\frac{C p}{nv}\bigg)^p.
\end{eqnarray*}	
If the opposite inequality holds, \(|b(z)| < C p (nv)^{-1}\), then we will use the bound \( |\imag \Lambda_n| \le C \sqrt{|T_n|} \):
\begin{eqnarray*}
\E|\imag \Lambda_n|^p \le C^p \E^\frac12 |T_n|^{p} \le \bigg(\frac{C p}{nv}\bigg)^p.
\end{eqnarray*}
Both inequalities combined yield
\begin{eqnarray*}
\E|\imag \Lambda_n|^p \le \bigg(\frac{Cp}{nv}\bigg)^p.
\end{eqnarray*}
Similar arguments are applicable to  \(\E|\Lambda_n|^p\).
\end{proof}

\begin{proof}[Proof of Theorem~\ref{th: stronger bound for imag part}] Following the remark after Theorem~\ref{th: general bound} we may  conclude that
\begin{eqnarray*}\label{eq: T n 6 step}
\E|T_n|^p \le \frac{C^p p^p \imag^p m_{sc}(z)}{(nv)^p} +  \frac{C^p p^{2p}}{(nv)^{2p}}  + \frac{C^p |b(z)|^\frac{p}{2} \imag^\frac{p}{2}m_{sc}(z)}{(nv)^p } + \frac{C^p  p^{p/2}|b(z)|^\frac{p}{2} p^p}{(nv)^\frac{3p}{2} } .
\end{eqnarray*}
Using the bound  \( |\imag \Lambda_n| \le C |T_n|/|b(z)|\) we get
\begin{eqnarray*}\label{eq: Lambda_n stronger bound}
\E|\imag \Lambda_n|^p \le\frac{C^p p^p \imag^p m_{sc}(z)}{(nv)^p |b(z)|^p} +  \frac{C^p p^{2p}}{(nv)^{2p}|b(z)|^p}  + \frac{C^p p^\frac p2 \imag^\frac{p}{2}m_{sc}(z)}{(nv)^p |b(z)|^\frac{p}{2}} + \frac{C^p p^p}{(nv)^\frac{3p}{2} |b(z)|^\frac{p}{2} }.
\end{eqnarray*}
Since \(c\sqrt{\kappa + v} \le |b(z)| \le C\sqrt{\kappa + v}\) for all \(|u| \le u_0\), \(0 < v \le V\), and
\begin{eqnarray*}
\frac{c v}{\sqrt{\kappa + v}} \le \imag m_{sc}(z) \le \frac{c v}{\sqrt{\kappa + v}} \quad \text{ for all } 2 \le |u| \le u_0, 0 < v \le V,
\end{eqnarray*}
(see e.g.~\cite{ErdKnowYauYin2013}[Lemma 4.3]) we finally get
\begin{eqnarray*}\label{eq: Lambda_n stronger bound 2}
\E|\imag \Lambda_n|^p \le \frac{C^p p^p }{n^p (\kappa + v)^p} +  \frac{C^p p^{2p}}{(nv)^{2p} (\kappa + v)^\frac{p}{2}}  + \frac{C^p p^\frac p2}{n^p v^\frac{p}{2} (\kappa + v)^\frac{p}{2}} + \frac{C^p p^p}{(nv)^\frac{3p}{2} (\kappa+ v)^\frac{p}{4} }.
\end{eqnarray*}
This bound concludes the proof of the theorem.
\end{proof}

\section{Bounds for moments of diagonal entries of the resolvent}\label{sec: diagonal entries}
The main result of this section is the following lemma which provides a bound for moments of the diagonal entries of the resolvent.
Recall that (see the definition~\eqref{eq: definition reginon D}) for \(\alpha = 1, 2\)
\begin{eqnarray*}
\mathbb D_\alpha: = \{z = u+iv \in \C: |u| \le u_0, V \geq v \geq v_0: = A_0 n^{-1} \log^\alpha n \},
\end{eqnarray*}
where \(u_0, V > 0\) are any fixed real numbers and \(A_0\) is some large constant determined below. The first value \(\alpha = 1\) is sufficient to obtain optimal bounds for delocalization of eigenvectors. The second value, \(\alpha = 2\), is necessary for the main Theorem~\ref{th:main}. 
\begin{lemma}\label{main lemma}
Assuming the conditions \(\CondTwo\) there exist a positive constant \(H_0\) depending on \(u_0, V\) and positive constants \(A_0, A_1\) depending on \(H_0\) such that for all \(z \in \mathbb D_\alpha\) and \(1 \le p \le A_1 \log^\alpha n\)  we have
\begin{eqnarray}\label{eq: main lemma first statement}
&&\max_{1\le j \le k \le n} \E|\RR_{jk}(z)|^p \le H_0^p, \\
\label{eq: main lemma second statement}
&&\E \frac{1}{|z+m_n(z)|^p} \le H_0^p, \\
\label{eq: main lemma third statement}
&&\E \imag^p \RR_{jj}(z) \le H_0^p \bigg[ \imag^p m_{sc}(z) + \frac{ p^p}{(nv)^p} \bigg].
\end{eqnarray}
\end{lemma}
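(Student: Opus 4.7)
The plan is to establish \eqref{eq: main lemma first statement}--\eqref{eq: main lemma third statement} simultaneously by a continuity (bootstrap) argument that descends in the imaginary part $v$ from $v = V$, where everything is trivial because $\|\RR\| \le V^{-1}$, down to $v = v_0$. At each scale the three bounds are of $O(1)$ type, and all three will follow once one knows that $z + m_n(z)$ stays away from zero with very high probability; indeed, \eqref{eq: R_jj representation} then yields $\RR_{jj}$ close to $m_{sc}(z)$, and the Ward identity $\imag \RR_{jj} = v\sum_k |\RR_{jk}|^2$ together with the usual off--diagonal representation
\[
\RR_{jk} = -\RR_{jj}\,\RR_{kk}^{(j)}\Bigl(\tfrac{X_{jk}}{\sqrt n} - \tfrac{1}{n}\!\!\sum_{s,t \in \T_{j,k}}\!\!X_{js}X_{kt}\RR^{(j,k)}_{st}\Bigr)
\]
propagates this to off--diagonal entries and to $\imag \RR_{jj}$ respectively.

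The first core reduction is therefore to control $\varepsilon_j = \varepsilon_{1j} + \varepsilon_{2j} + \varepsilon_{3j} + \varepsilon_{4j}$ in $L^p$, since the self--consistent equation \eqref{eq: R_jj representation} combined with $z + m_{sc} + m_{sc}^{-1} = 0$ reduces \eqref{eq: main lemma second statement} to a high--probability bound $|\Lambda_n| \le c$, which via \eqref{eq: abs value lambda main result section} and \eqref{definition of T} follows from moment bounds on the $\varepsilon_j$. Given \eqref{eq: main lemma second statement}, a second use of \eqref{eq: R_jj representation} produces \eqref{eq: main lemma first statement} for diagonal entries, the off--diagonal formula above promotes this to general $(j,k)$, and \eqref{eq: main lemma third statement} drops out of the Ward identity together with the self--consistent equation.

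The main obstacle is to bound $\E|\varepsilon_j|^p$ when $X_{jk}$ has only four moments and the admissible truncation is merely at the scale $\sqrt n / \overline R$. Following the strategy outlined in Section \ref{sketch} and Aggarwal's configuration idea, I would introduce the configuration matrix $\LL = [L_{jk}]$ with $L_{jk} = \one[|X_{jk}| \le n^{1/4}\underline R]$ and show, by a Bernstein/combinatorial argument, that the event $\mathcal E$ on which each row and column of $\LL$ contains at most polylogarithmically many zeros (``\textit{admissible}'' configurations) has probability at least $1 - n^{-Q}$ for any $Q$. On $\mathcal E^c$ the crude bound $\|\RR(z)\| \le v^{-1}$ is used and absorbed by the probability estimate, for $p \le A_1 \log n$. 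On $\mathcal E$ one conditions on $\LL$ and splits each row $j$ of $\X$ into its small--entry sub--row (entries with $L_{jk} = 1$) and its short large--entry sub--row (entries with $L_{jk} = 0$); writing $\varepsilon_{2j}, \varepsilon_{3j}$ as sums over the corresponding index subsets, the small--entry contributions are controlled by the sharp quadratic and linear moment bounds of \cite{GotzeNauTikh2015a, GotzeNauTikh2018TVP}, which are now effective because truncation at $n^{1/4}\underline R$ kills an extra factor of $n^{-p/2}$ in the dangerous term $C^p p^{2p}\beta_p^2 n^{-p}\sum_{k,l}\E|\RR^{(j)}_{kl}|^{2p}$. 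For the large--entry sub--row, which by admissibility contains only $\text{polylog}(n)$ terms, a crude Cauchy--Schwarz bound using $|X_{jk}|\le \sqrt n/\overline R$ together with $\|\RR^{(j)}\|\le v^{-1}$ (itself available by the bootstrap at the preceding scale) yields a contribution without any $p^p$ factor.

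Finally I would close the induction: the conditional $\varepsilon_j$--estimate feeds into \eqref{eq: abs value lambda main result section}--\eqref{definition of T} to give the required control of $|\Lambda_n|$, hence of $|z + m_n(z)|^{-1}$, at the new scale $v_k$ with the \emph{same} constant $H_0$ (this self--improvement is where the large absolute constant $H_0$ depending only on $u_0, V$ is chosen, and where $A_0, A_1$ are determined). The descent in $v$ is discretised on a grid of spacing $n^{-3}$ using that $\RR(u+iv)$ is $v^{-2}$--Lipschitz in $v$. The most delicate step in the whole argument is the uniform control of the large--entry quadratic form on admissible configurations: one must exploit \emph{simultaneously} the fact that its range of summation is polylogarithmic and the operator bound on $\RR^{(j)}$, without reintroducing a $p^p$ factor, for otherwise the bootstrap constant blows up.
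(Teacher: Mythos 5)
Your overall architecture (configuration matrix, admissible configurations with probability $1-n^{-c\log^2 n}$, conditioning on $\LL$, splitting each row into the small-entry part truncated at $n^{1/4}\underline R$ and the polylogarithmically short large-entry part, crude $\|\RR\|\le v^{-1}$ bound on the inadmissible event, and a descent in $v$ driven by the self-consistent equation) coincides with the paper's steps. But there is a genuine gap at the bottom of the descent. The quadratic/linear-form moment inequalities you invoke for the small-entry sub-rows are only valid (and only useful) for moment orders up to roughly $(nv)^{1/4}$, and after accounting for the truncation level the paper's conditional descent (Lemmas~\ref{G_{jk} 0 step} and~\ref{lemma step for resolvent}) is restricted to $q\lesssim A_1(nv)^{1/4}/\underline R$: the error terms such as $C^qq^{3q}\underline R^{2q}(n^{3/2}v)^{-q}$ and $C^qq^{4q}\underline R^{4q}n^{-q}$ in the bound for $\zeta_{2j}$ blow up beyond that range. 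At the target scale $v_0=A_0n^{-1}\log^{\alpha}n$ one has $(nv_0)^{1/4}/\underline R\asymp\log^{\alpha/4-1}n<1$ for $\alpha\le 2$, so the bootstrap you describe cannot be closed at $v\approx v_0$ with $p\asymp\log^{\alpha}n$ \emph{at all}, no matter how the constants are tuned; and the lemma genuinely needs $p\asymp\log^{2}n$ there, since the later proofs (e.g.\ \eqref{eq: RR condition prob} in the proof of Theorem~\ref{th: general bound}) use $\Pb(|\RR_{jj}|>H_0)\le n^{-c\log n}$.

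The paper fills exactly this hole with a step your plan does not contain: the direct conditional descent is stopped at the intermediate scale $\tilde v_0=A_0n^{-1}\log^{4(\alpha+1)}n$ (where $(n\tilde v_0)^{1/4}/\underline R\asymp\log^{\alpha}n$); a crude dilation step then reaches $v_0$ at the price of a factor $\log^{(3\alpha-1)p}n$, and this factor is removed by a four-moment matching/replacement argument (Lemma~\ref{lem: bound in the optimal region}): the small entries $\xi_{jk}$ are swapped one at a time, via resolvent expansion and a Stein-type treatment of $\varphi(\G_{jk})=\overline{\G}_{jk}|\G_{jk}|^{p-2}$, for bounded sub-Gaussian variables $\overline\xi_{jk}$ with the same first four moments, and for the surrogate matrix the Hanson--Wright inequality (Lemma~\ref{lemma step for resolvent subgauss}) permits moments up to $q\le A_1nv$, hence reaches $v_0$ with $p=A_1\log^{\alpha}n$. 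Relatedly, you locate the main delicacy in the large-entry quadratic form, but in the paper that part is handled by crude bounds (at most $r=\log^3 n$ terms, $|\eta_{jk}|\le\sqrt n/\overline R$); the real obstruction is the admissible moment range at small $v$, which your proposal does not address.
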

We provide the proof of~\eqref{eq: main lemma first statement} only. The proof of~\eqref{eq: main lemma second statement} and~\eqref{eq: main lemma third statement} is the same and will be omitted. For the details see~\cite{GotzeNauTikh2016a}[Lemma 3.1].

We start with introducing the following events
\begin{eqnarray*}
A_{jk}:=\{|X_{jk}|\le n^{\frac14}\underline R\},\quad B_{jk}:=A_{jk}^c = \{ n^\frac14 \underline R \le |X_{jk}| \le n^\frac12/\overline R \},
\end{eqnarray*}
where \(\underline R: = \underline R_n\) is some quantity depending on \(n\). We also denote 
\begin{eqnarray*}
p_n:= \Pb(n^\frac14\, \underline R \le |X_{jk}| \le n^\frac12 /\, \overline R)
\end{eqnarray*} 
Using Markov's inequality, it is easy to check that
\begin{eqnarray*}
p_n \le n^{-1} \underline R^4. 
\end{eqnarray*}
Following~\cite{Aggarwal2016} let us introduce the following configuration matrix
\begin{eqnarray*}
\LL:=\LL(\X):=[L_{jk}]_{j,k=1}^n
\end{eqnarray*}
with \(L_{jk}:=\one[A_{jk}]\). Let \(\xi_{jk}\) and \(\eta_{jk}\), \(j,k=1,\ldots,n\),  be mutually  independent random variables distributed  as \(X_{jk}\) conditioned on \(A_{jk}\) and \(B_{jk}\) resp.
Let \(\X(\LL):=[X_{jk}(L_{jk})]_{j,k=1}^n\)
where
\begin{eqnarray}\label{configuration matrix}
X_{jk}(L_{jk}):=\begin{cases}&\xi_{jk},\text{ if }L_{jk}=1,\\&\eta_{jk},\text{ if }L_{jk}=0.\end{cases}
\end{eqnarray}
We may consider matrix \(\W(\LL)\) as the matrix \(\W\) conditioned on the configuration \(\LL\).
We repeat some classification of configuration matrices from \cite{Aggarwal2016}.
\begin{definition}
{\rm Fix an \(n\times n\) configuration matrix \(\LL=[L_{jk}]_{j,k=1}^n\). We call} \(j,k\in \T\) {linked} {\rm (w.r.t \(\LL\)) if \(L_{jk}=0\); otherwise  we call them} {unlinked}.
\end{definition}
\begin{definition}{\rm  The indices \(j\) and \(k\) are} connected {\rm if there exists an integer \(l\) and a sequence of indices  \(j=j_1,j_2,\ldots,j_l=k\) such that \(j_{\nu}\) is} linked { \rm to \(j_{\nu+1}\) for each \(\nu\in[1,l-1]\).} 
\end{definition}
\begin{definition}
{\rm We call index \(j\)} deviant {\rm (w.r.t. to \(\LL\)) if there exist some index \(k\) such that \(j\) and \(k\) are linked. Otherwise \(j\) is called} typical. {\rm Let \(\mathcal D:=\mathcal D_{\LL}\subset \T\) denote the set of deviant indexes, and let \(\mathcal T:=\mathcal T_{\LL}\subset \T\) denote the set of typical indexes.}
\end{definition}

\begin{definition}
{\rm We say that \(\LL\) is:}
\begin{itemize} 
\item \quad {\it deviant-inadmissible} {\rm if there exist at least \( K \max(1, n^2 p_n) \) deviant indices, where \(K\) may depend on \(n\). }
\item  \quad {\it \(r\)-connected-inadmissible} {\rm if there exist distinct indices \(j_1, j_2, \ldots , j_r\) that are 
pairwise connected.} 
\end{itemize}
\end{definition}
For any \(\LL\) define 
\begin{eqnarray*}
r(\mathbf L):=\max\{r\ge 1:\ \LL \text{ is } \text{\(r\)-connected-inadmissible}\}.
\end{eqnarray*}
\begin{definition}\label{admissible}
We call configuration \(\LL\) \(r\)-admissible, if it is not deviant-inadmissible and
\begin{eqnarray*}
r(\mathbf L)\le r-1.
\end{eqnarray*}
\end{definition}
Following~\cite{Aggarwal2016} we may estimate   
\begin{eqnarray*}
\Pb ( \LL \text{ is \(r\)-connected-inadmissible} ) &\le& \binom{n}{r}  \binom{r^2}{r-1} p_n^{r-1} \\&\le& \bigg( \frac{ne}{r} \bigg)^r \bigg( \frac{e r^2}{r-1} \bigg)^{r-1} \bigg( \frac{1}{n\underline R^4} \bigg)^{r-1} \le  e^{3r} n r^{-1} \underline R^{-4(r-1)}.
\end{eqnarray*}
Applying Chernoffs inequality we may also show that
\begin{eqnarray*}
\Pb ( \LL \text{ is deviant-inadmissible} ) \le e^{- c K \max(1, n^2 p_n)}.
\end{eqnarray*}
Denote by \( \mathcal L_r\) the set of all \(\LL\) \(r\)-admissible configurations. In what follows we take \( r := \log^3 n, \underline R := \log n, K:= \log^3 n\). Then
\begin{eqnarray}\label{bad conf}
\Pb( \LL \notin \mathcal L_r) \le n^{-c \log^2 n}
\end{eqnarray}
for some large \(c > 0 \). 

Let us fix the \(r\)-admissible configuration \(\LL\). By definition of \(r\)-admissibility we may find Hermitian matrices \(\A_{\nu}\) of order \(r_{\nu}\le r\), \(\nu=1,\ldots,L\)  such that \(r_1+\ldots+r_L \le K \max (1, n^2 p_n)\) and matrix \(\LL\) may be rewritten as follows
\begin{eqnarray}\label{configuration matrix 2}
\LL=\begin{bmatrix}&\A_1&1\ldots&1\ldots&1\ldots\\
&1\ldots &\A_2&\ldots&1\ldots\\
&\ldots&\ldots&\ldots\\
&1\ldots&1\ldots &\A_L& 1 \ldots \\
&1\ldots& 1\ldots&\ldots& 1 \ldots\\
&\ldots&\ldots&\ldots\\
&1\ldots&1\ldots&\ldots& 1 \ldots
\end{bmatrix}.
\end{eqnarray}
Moreover, the zero-entries of matrix \(\LL\) can only be inside of \(\A_{\nu}\), and in each row (column) may contain at most \(r\) zero-entries.

Denote \(\HH := \X(\LL)\) and \(\G := (n^{-1/2}\HH - z\I)^{-1}\).  We also assume that \( \HH = [h_{jk}]_{j,k=1}^n\).   
\begin{lemma}\label{G_{jk} 0 step}
Let \(\LL\) be \(r\)-admissible. Assuming the conditions \(\CondTwo\) there exist a positive constant \(C_0\) depending on \(u_0, V\) and positive constants \(A_0, A_1\) depending on \(C_0\) such that for all \(z \in \mathbb D_\alpha\) and \(1 \le p \le A_1(nv)^{1/4}/\overline R\) we have
\begin{eqnarray*}\label{eq: main lemma first statement 0}
\max_{1\le j,k \le n} \E|\G_{jk}|^p \le C_0^p.
\end{eqnarray*}
\end{lemma}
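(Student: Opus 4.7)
The plan is to adapt the bootstrap scheme of~\cite{GotzeNauTikh2016a}[Lemma~3.1] to the conditioned matrix $\HH$, incorporating the $\xi/\eta$ row decomposition provided by $r$-admissibility of $\LL$ (in the spirit of~\cite{Aggarwal2016}). Writing $\tilde m_n := n^{-1}\Tr\G$ and taking Schur complements,
\begin{eqnarray*}
\G_{jj}^{-1} = -z + \frac{h_{jj}}{\sqrt n} - \frac{1}{n}\sum_{k,l \in T_j} h_{jk} h_{jl}\G_{kl}^{(j)},
\end{eqnarray*}
so that $\G_{jj} = -(z+\tilde m_n)^{-1} + (z+\tilde m_n)^{-1}\varepsilon_j\,\G_{jj}$ with $\varepsilon_j=\varepsilon_{1j}+\ldots+\varepsilon_{4j}$ defined as in~\eqref{vareps def} but with $\HH,\G$ replacing $\X,\RR$. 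The goal is to bound $\E|\varepsilon_j|^p\le C^p$ on the stated range; combined with stability of the self-consistent equation near $m_{sc}$ this closes the desired uniform bound for $\E|\G_{jj}|^p$, from which the off-diagonal entries $\G_{jk}$ follow via the standard resolvent identity expressing them through $\G_{jj},\G_{kk}^{(j)}$ and a cross quadratic form in the $j$-th and $k$-th rows, handled by the same decomposition.

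For each $j$, partition $T_j = \mathcal K_j^\xi \sqcup \mathcal K_j^\eta$ with $\mathcal K_j^\eta:=\{k:L_{jk}=0\}$; $r$-admissibility guarantees $|\mathcal K_j^\eta|\le r=\log^3 n$ for every $j$. Split each of $\varepsilon_{2j}$ and $\varepsilon_{3j}$ into $\xi\xi$, $\xi\eta$, $\eta\eta$ (resp.\ $\xi$, $\eta$) pieces. I would treat the pure $\xi$-pieces via the quadratic and linear form moment inequalities of~\cite{GotzeNauTikh2015a}[Theorems A.1--A.2] and~\cite{GotzeNauTikh2018TVP}[Theorem~7]. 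Since $|\xi_{jk}|\le n^{1/4}\underline R$ with $\underline R=\log n$, the troublesome last term $C^p p^{2p}\beta_p^2 n^{-p}\sum_{k,l}\E|\G^{(j)}_{kl}|^{2p}$ in the quadratic form bound is now controlled by $\beta_p^\xi\le\beta_4(n^{1/4}\underline R)^{p-4}$, yielding a contribution of order $C^p p^{2p}\underline R^{2p}n^{-p/2}$ that remains bounded throughout the range $p\le A_1(nv)^{1/4}/\overline R$. The small biases $|\E\xi_{jk}|=O(n^{-3/4})$ and $|\E\xi_{jk}^2-1|=O(n^{-1/2})$ introduced by the conditioning contribute only lower-order terms.

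For the mixed and pure $\eta$-contributions the strategy is opposite: each sum contains at most $r^2$ terms, so no concentration inequality is invoked. Crude bounds $|\eta_{jk}|\le\sqrt n/\overline R$, $|\G_{kl}^{(j)}|\le v^{-1}$, together with H\"older's inequality applied to the mixed term, yield a piecewise contribution of order $C^p(r^2/\overline R^2)^p v^{-p}$, which is uniformly bounded on $\mathbb D_\alpha$ since $r$ and $\overline R$ are both of order $\log^3 n$ and $v\ge v_0$. Summing all contributions gives $\E|\varepsilon_j|^p\le C^p$. The bootstrap then follows~\cite{GotzeNauTikh2016a}[Lemma~3.1] almost verbatim: one controls $|z+\tilde m_n|$ from below by the analogue of~\eqref{eq: abs value lambda main result section} applied to $\HH$, and an induction on $p$ (or a continuity argument in $v$ descending from $v=V$, where the bound is trivial) promotes $|\G_{jj}|\le v^{-1}$ to the claimed uniform constant $C_0$.

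The main obstacle will be the mixed term $\varepsilon_{2j}^{\xi\eta}$: conditioning on the $\eta$-factor and applying the linear-form bound in the $\xi$-variables yields a factor $\E|\eta_{jl}|^2\le Cn^{1/2}/\overline R^2$ rather than $1$. The matching gain $(nv)^{-p/2}$ from the linear-form inequality just barely compensates this loss, and it is this balance that dictates the upper range $p\le A_1(nv)^{1/4}/\overline R$ in the statement; pushing further would require a finer block-structured treatment of the deviant rows $\A_\nu$, but is not needed for the applications to Theorem~\ref{th:main}.
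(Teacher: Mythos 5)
Your decomposition of $\varepsilon_{2j},\varepsilon_{3j}$ into $\xi\xi$, $\xi\eta$, $\eta\eta$ pieces according to the linked set $\mathbb A_j$ (with $|\mathbb A_j|\le r$) is exactly the paper's splitting into the terms $\zeta_{1j},\ldots,\zeta_{6j}$, and your treatment of the pure $\xi$-pieces via the quadratic/linear form inequalities with truncation at $n^{1/4}\underline R$ is also the paper's. The gap is in how you close the argument on the linked pieces. You bound the $\eta$-contributions by the deterministic estimates $|\eta_{jk}|\le \sqrt n/\overline R$ and $|\G^{(j)}_{kl}|\le v^{-1}$ and claim the resulting $C^p(r^2/\overline R^2)^p v^{-p}$ is ``uniformly bounded on $\mathbb D_\alpha$''. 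It is not: with $r=\log^3 n$ and $\overline R\ge \log^3 n$ the factor $r^2/\overline R^2$ is only $O(1)$, while on $\mathbb D_\alpha$ one has $v^{-1}$ as large as $n/(A_0\log^\alpha n)$, so this term is of order $n^p$ up to logarithms and destroys the claimed bound $C_0^p$. The resolvent factor in the linked pieces must be controlled in $L^{2q}$ by a constant, not by $v^{-1}$, and this is precisely what the paper's multiplicative descent supplies: Lemma~\ref{lemma step for resolvent} assumes $\E|\G^{(\J)}_{lk}(s_0v)|^{2q}\le C_0^{2q}$ at the larger scale $s_0v$, transfers it to scale $v$ through $\imag\G_{jj}(v)\le s_0\,\imag\G_{jj}(s_0v)$ (inequalities \eqref{G diag est}--\eqref{ G off diag est}), and only then applies the crude counting bound over the at most $r$ linked indices, giving $\E|\zeta_{1j}|^{2q}\le C^q r^{4q}(s_0C_0)^{2q}\overline R^{-4q}$, which is genuinely $O(C^q)$. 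The same a priori moment input is needed for your mixed $\xi\eta$ term. Your closing sentence does mention ``a continuity argument in $v$ descending from $v=V$'', but you invoke it only to promote $|\G_{jj}|\le v^{-1}$ to a constant, not where it is actually indispensable, namely inside the $\varepsilon_j$-estimates themselves; as written, the estimate of $\E|\varepsilon_j|^p$ fails before the bootstrap can start.

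Two further points you would need to repair even after fixing this. First, the descent consumes one removed index per step: the estimates of $\varepsilon_j^{(\J)}$ involve $\G^{(\J,j)}$, so the induction hypothesis must be carried for all sub-resolvents $\G^{(\J)}$ with $|\J|\le L\sim\log_{s_0}(V/\tilde v_0)$, which your proposal does not track. Second, your plan to close via ``stability of the self-consistent equation near $m_{sc}$'' (i.e.\ a lower bound on $|z+\tilde m_n|$) is a heavier route than needed and would itself require a smallness/dichotomy argument for $\Lambda_n$ at each scale; the paper avoids this entirely by using the inequality of Lemma~\ref{lemma: recurence relation for R_jj} (from \cite{Schlein2014}), $|\G^{(\J)}_{jj}|\le c_0\bigl(1+|T_n^{(\J)}|^{1/2}|\G^{(\J)}_{jj}|+|\varepsilon_j^{(\J)}||\G^{(\J)}_{jj}|\bigr)$, valid for all $v>0$, which together with Cauchy--Schwarz reduces everything to moments of $\varepsilon_j^{(\J)}$. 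Finally, the restriction $p\lesssim (nv)^{1/4}/\overline R$ comes from the Rosenthal/Burkholder terms for the $\xi$-pieces (e.g.\ $q^{4q}\underline R^{4q}n^{-q}$ must stay bounded), not from the mixed term balance you describe.
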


Since \(u\) is fixed and \(|u| \le u_0\) we shall omit \(u\) from the notation of the resolvent and denote \(\G(v): = \G(z)\). Sometimes in order to simplify notations we shall also omit the  argument \(v\) in \(\G(v)\) and just write \(\G\). For any \(j \in \T_{\J}\) (see section~\ref{sec: notation}) we may express \(\G_{jj}^{(\J)}\) in the following way (compare with~\eqref{eq: R_jj representation 0})
\begin{eqnarray*}\label{eq: representation for RR_jj}
\G_{jj}^{(\J)} = - \frac{1}{z + m_n^{(\J)}(z) -  \ve_j^{(\J)}  },
\end{eqnarray*}
where \( m_n^{(\J)}(z): = \frac{1}{n} \Tr \G^{(\J)}(z)\) and  \(\ve_j^{(\J)} : = \ve_{1j}^{(\J)} + \ldots +\ve_{5j}^{(\J)}\). Here
\begin{eqnarray*}
\ve_{1j}^{(\J)} &:=&  \frac{1}{\sqrt n} h_{jj}, \quad \varepsilon_{2j}^{(\J)} := -\frac{1}{n}\sum_{l \ne k \in \T_{\J,j}} h_{jk} h_{jl} \G_{kl}^{(\J,j)},
\quad \ve_{3j}^{(\J)} := -\frac{1}{n}\sum_{k \in T_{\J,j}} (h_{jk}^2 -\sigma_{jk}^2) \G_{kk}^{(\J, j)}, \\
\ve_{4j}^{(\J)}&:=& \frac{1}{n} (\Tr \G^{(\J)} - \Tr \G^{(\J,j)}), \quad \ve_{5j}^{(\J)} := \frac{1}{n}\sum_{k \in T_{\J,j}} (1 - \sigma_{jk}^2) \G_{kk}^{(\J, j)}.
\end{eqnarray*}
We also introduce the quantities \(\Lambda_n^{(\J)}(z) : = m_n^{(\J)} (z) - m_{sc}(z)\) and
\begin{eqnarray*}
T_n^{(\J)}: = \frac{1}{n} \sum_{j \in \T_{\J}} \varepsilon_j^{(\J)}\G_{jj}^{(\J)}.
\end{eqnarray*}
The following lemma allows  to  recursively estimate the moments of \(\G_{jj}^{(\J)}\). 
\begin{lemma}\label{lemma: recurence relation for R_jj}
For an arbitrary set \(\J \subset \T\) and all \(j \in \T_\J\) there exist a positive constant \(c_0\) depending on \(u_0, V\) only such that for all \(z = u + i v\) with \(V \geq v > 0\) and \(|u| \le u_0\) we have
\begin{eqnarray*}\label{inequality for R_jj}
|\G_{jj}^{(\J)}| \le c_0\Big(1 + |T_n^{(\J)}|^{\frac{1}{2}}|\G_{jj}^{(\J)}| + |\varepsilon_j^{(\J)}||\G_{jj}^{(\J)}|\Big).
\end{eqnarray*}
\end{lemma}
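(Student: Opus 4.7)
The plan starts from the resolvent identity for the diagonal entry,
\[
\G_{jj}^{(\J)} = -\frac{1}{z + m_n^{(\J)}(z) - \varepsilon_j^{(\J)}},
\]
and rewrites the denominator by pivoting on the semicircle Stieltjes transform. Using the defining relation $m_{sc}^2 + z m_{sc} + 1 = 0$, equivalently $z + m_{sc}(z) = -1/m_{sc}(z)$, and splitting $z + m_n^{(\J)} = (z + m_{sc}) + \Lambda_n^{(\J)}$, I obtain
\[
\G_{jj}^{(\J)} = \frac{m_{sc}}{1 - m_{sc}\bigl(\Lambda_n^{(\J)} - \varepsilon_j^{(\J)}\bigr)},
\]
and, after clearing the denominator,
\[
\G_{jj}^{(\J)} = m_{sc} + m_{sc}\,\G_{jj}^{(\J)}\bigl(\Lambda_n^{(\J)} - \varepsilon_j^{(\J)}\bigr).
\]
Since $|m_{sc}(z)|$ is bounded by some constant $c_1 = c_1(u_0,V)$ on the strip $|u|\le u_0$, $0 < v \le V$, the triangle inequality yields
\[
|\G_{jj}^{(\J)}| \le c_1\bigl(1 + |\Lambda_n^{(\J)}|\,|\G_{jj}^{(\J)}| + |\varepsilon_j^{(\J)}|\,|\G_{jj}^{(\J)}|\bigr).
\]

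The next step is to bound $|\Lambda_n^{(\J)}|$ by a constant multiple of $|T_n^{(\J)}|^{1/2}$. Summing the rearranged identity $\G_{jj}^{(\J)}(z + m_n^{(\J)}) = -1 + \G_{jj}^{(\J)}\varepsilon_j^{(\J)}$ over $j\in\T_{\J}$ and using $\sum_{j\in\T_{\J}}\G_{jj}^{(\J)} = n\, m_n^{(\J)}$ produces the perturbed self-consistent equation
\[
(\Lambda_n^{(\J)})^2 + b(z)\,\Lambda_n^{(\J)} = T_n^{(\J)} + |\J|/n.
\]
Solving this quadratic and choosing the branch that vanishes as $v\to\infty$ --- precisely the Schlein-type mechanism already invoked to derive \eqref{eq: abs value lambda main result section} and \eqref{eq: abs imag lambda main result section} --- delivers the estimate $|\Lambda_n^{(\J)}| \le C\sqrt{|T_n^{(\J)}| + |\J|/n}$ throughout the strip $|u|\le u_0$, $v \le V$. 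Substituting this back into the previous display and absorbing the $|\J|/n$ piece into the additive $1$ then yields the claimed inequality with a constant $c_0$ depending only on $u_0$ and $V$.

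The main obstacle I anticipate is precisely this $|\J|/n$ source term in the quadratic identity, which appears because $m_n^{(\J)}$ is normalized by $1/n$ rather than $1/(n - |\J|)$. In every application of the lemma, the index set $\J$ is small enough that $|\J|/n$ is harmlessly dominated by the additive $1$ on the right-hand side, and the Schlein-type passage from the quadratic identity to the square-root bound $|\Lambda_n^{(\J)}| \lesssim \sqrt{|T_n^{(\J)}|}$ then goes through with constants depending only on $u_0$ and $V$. All remaining manipulations are straightforward triangle-inequality arithmetic, and the structure of the bound is exactly tailored so that iterating it together with moment estimates on $\varepsilon_j^{(\J)}$ and $T_n^{(\J)}$ will produce the uniform $p$-th moment estimate on $|\G_{jj}^{(\J)}|$ required in Lemma~\ref{G_{jk} 0 step}.
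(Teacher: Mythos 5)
Your first step is fine and is the standard one: from $\G_{jj}^{(\J)}=-\bigl(z+m_n^{(\J)}-\varepsilon_j^{(\J)}\bigr)^{-1}$ and $z+m_{sc}=-1/m_{sc}$ you correctly get $\G_{jj}^{(\J)}=m_{sc}+m_{sc}\,\G_{jj}^{(\J)}\bigl(\Lambda_n^{(\J)}-\varepsilon_j^{(\J)}\bigr)$ and hence $|\G_{jj}^{(\J)}|\le c_1\bigl(1+|\Lambda_n^{(\J)}||\G_{jj}^{(\J)}|+|\varepsilon_j^{(\J)}||\G_{jj}^{(\J)}|\bigr)$. (The paper itself does not prove the lemma; it defers to \cite{Schlein2014} and \cite{GotzeNauTikh2015a}.) The genuine gap is your second step, the claim $|\Lambda_n^{(\J)}|\le C\sqrt{|T_n^{(\J)}|+|\J|/n}$ \emph{throughout} the strip $|u|\le u_0$, $0<v\le V$. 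A square-root bound for the full modulus of $\Lambda_n$ is exactly what \eqref{eq: abs value lambda main result section} gives, and it is restricted to $|u|\le 2+v$; for $|u|\le u_0$ with $u_0>2$ only the imaginary-part version \eqref{eq: abs imag lambda main result section} is available, and the lemma is needed precisely in that larger region (Lemma~\ref{lemma step for resolvent}, Theorem~\ref{th: stronger bound for imag part} take $u_0>2$). The dichotomy coming from the quadratic only gives $\min\bigl(|\Lambda_n^{(\J)}|,|z+m_n^{(\J)}+m_{sc}|\bigr)\le\sqrt{|T_n^{(\J)}|+|\J|/n}$, and ruling out the second alternative by ``choosing the branch that vanishes as $v\to\infty$'' is not a pointwise, deterministic argument: for a fixed realization and a fixed $z$ with $|u|>2+v$ the value $m_n^{(\J)}(z)$ may lie close to the wrong root $1/m_{sc}$ (whose imaginary part is only of size $\imag m_{sc}\sim v/\sqrt{\kappa}$ there), so $|\Lambda_n^{(\J)}|\gg\sqrt{|T_n^{(\J)}|}$ is possible and your intermediate claim fails. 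The way the cited proofs avoid this is not to bound $|\Lambda_n^{(\J)}|$ at all in the bad case, but to argue by cases: either $|\Lambda_n^{(\J)}|\le\sqrt{|T_n^{(\J)}|+|\J|/n}$ and your identity finishes; or $|z+m_n^{(\J)}+m_{sc}|\le\sqrt{|T_n^{(\J)}|+|\J|/n}$, in which case $z+m_n^{(\J)}=-m_{sc}+(z+m_n^{(\J)}+m_{sc})$ gives $|z+m_n^{(\J)}|\ge|m_{sc}|-\sqrt{\cdots}$, and one concludes directly from $|\G_{jj}^{(\J)}|\le\bigl(1+|\varepsilon_j^{(\J)}||\G_{jj}^{(\J)}|\bigr)/|z+m_n^{(\J)}|$ when $\sqrt{\cdots}\le|m_{sc}|/2$, while in the opposite case $1\le (2/|m_{sc}|)\sqrt{\cdots}$ lets one dominate $|\G_{jj}^{(\J)}|$ by the $|T_n^{(\J)}|^{1/2}|\G_{jj}^{(\J)}|$ term; only $0<c(u_0,V)\le|m_{sc}|\le 1$ is used, so this works on the whole strip.

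A secondary inaccuracy: the $|\J|/n$ source term cannot be ``absorbed into the additive $1$'', because in your chain it appears multiplied by $|\G_{jj}^{(\J)}|$ (as $\sqrt{|\J|/n}\,|\G_{jj}^{(\J)}|$), which is not bounded by a constant pointwise (e.g.\ $|\G_{jj}^{(\J)}|$ can be of order $1/v$). It is harmless only because in all applications $|\J|\lesssim\log n$ and the term is absorbed later, at the level of the moment bounds in Lemma~\ref{lemma step for resolvent}; as a statement about arbitrary $\J\subset\T$ with a constant depending only on $u_0,V$ it would actually be problematic, so this point deserves an explicit remark rather than dismissal.
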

\begin{proof} 
The proof may be found in~\cite{Schlein2014}[Lemma 3.4]  or~\cite{GotzeNauTikh2015a}[Lemma~4.2].
\end{proof}
Let us take \(\tilde v_0 := A_0 n^{-1} \log^{4(\alpha+1)} n\). 
\begin{lemma}\label{lemma step for resolvent} Let \(\LL\) be \(r\)-admissible and assume that the conditions \(\CondTwo\) hold. Let \(C_0\) and \(s_0\) be arbitrary numbers such that \(C_0 \geq \max(1/V, 6 c_0), s_0 \geq 4\). There exist a sufficiently large constant \(A_0\) and small constant \(A_1\) depending on \(C_0, s_0, V\) only such that the following statement holds. Fix some \(\tilde v: \tilde v_0 s_0 \leq \tilde v \le V\). Suppose that for some integer \(L > 0\), all \(u, v',q\) such that  \(\tilde v \leq v' \leq V,\, |u| \le u_0, 1 \le q \le A_1 (n v')^{1/4}/\underline R\)
\begin{eqnarray}\label{main condition 1 0}
\max_{\J: |\J| \le L} \max_{l, k \in \T_\J}\E |\G_{l k}^{(\J)}(v')|^q \le C_0^q.
\end{eqnarray}
Then for all \(u,v, q\) such that \(\tilde v/s_0 \leq v \le V, |u| \le u_0\), \(1 \le q \le A_1 (n v)^{1/4}/\underline R\)
\begin{eqnarray*}\label{main condition 1 1}
\max_{\J: |\J| \le L-1} \max_{l, k \in \T_\J}\E |\G_{l k}^{(\J)}(v)|^q \le C_0^q.
\end{eqnarray*}
\end{lemma}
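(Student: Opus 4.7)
The plan is to run a self-bounding argument at scale $v$ driven by the hypothesis~\eqref{main condition 1 0} at the nearby scale $\tilde v$. Fix $\J$ with $|\J| \le L-1$ and $j \in \T_\J$; the regime $v \ge \tilde v$ is immediate since then $|\J \cup \emptyset| \le L$ and~\eqref{main condition 1 0} applies directly, so I would focus on $\tilde v/s_0 \le v < \tilde v$. Starting from Lemma~\ref{lemma: recurence relation for R_jj}, I would split over the good event
$\Omega_j := \{c_0(|T_n^{(\J)}(v)|^{1/2} + |\varepsilon_j^{(\J)}(v)|) \le 1/2\}$, on which absorption gives $|\G_{jj}^{(\J)}(v)| \le 2c_0 \le C_0/3$, and on the complement use the deterministic bound $\|\G^{(\J)}(v)\| \le v^{-1}$ together with Markov's inequality, obtaining a contribution at most
\begin{equation*}
v^{-q}\bigl(C^{q}\E|T_n^{(\J)}(v)|^{q} + C^{q}\E|\varepsilon_j^{(\J)}(v)|^{2q}\bigr).
\end{equation*}
The rest of the work consists in showing that this residual is much smaller than $C_0^q$ under the prescribed range of $q$.

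For the $T_n$ moment I would use the crude bound $|T_n^{(\J)}| \le |\T_\J|^{-1}\sum_{k\in\T_\J}|\varepsilon_k^{(\J)}||\G_{kk}^{(\J)}|$ and Cauchy--Schwarz, reducing everything to $\E|\varepsilon_k^{(\J)}(v)|^{2q}$ and to the diagonal bound one is already proving. For $\varepsilon_j^{(\J)} = \varepsilon_{1j}^{(\J)} + \cdots + \varepsilon_{5j}^{(\J)}$, the diagonal term is absorbed by $|h_{jj}| \le \sqrt n/\overline R$, the interlacing term $\varepsilon_{4j}^{(\J)}$ is standard, and the renormalization $\varepsilon_{5j}^{(\J)}$ is negligible thanks to $1-\sigma_{jk}^2 = O(\underline R^4/n)$. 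The linear and quadratic forms $\varepsilon_{3j}^{(\J)}, \varepsilon_{2j}^{(\J)}$ are estimated by conditioning on $\G^{(\J,j)}$ and applying the moment inequalities for sums and quadratic forms from~\cite{GotzeNauTikh2018TVP,GotzeNauTikh2015a}, which reduces the task to controlling $\E|\G_{kl}^{(\J,j)}(v)|^{2q}$ and $\E\imag^q m_n^{(\J,j)}(v)$. These quantities are transferred from the unknown scale $v$ to the known scale $\tilde v$ by combining the monotonicity $v\,\imag \G_{kk}(u+iv) \uparrow$ in $v$ (giving $\E\imag^q m_n^{(\J,j)}(v) \le (\tilde v/v)^q \E\imag^q m_n^{(\J,j)}(\tilde v)$) with the Ward identity $\sum_l |\G_{kl}(v)|^2 = v^{-1}\imag \G_{kk}(v)$ and the inductive hypothesis $\E|\G_{kl}^{(\J,j)}(\tilde v)|^{q} \le C_0^q$. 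Choosing $s_0 \ge 4$ and $A_0$ large absorbs the loss incurred by the factor $\tilde v/v$.

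The main obstacle is keeping the quadratic form $\varepsilon_{2j}^{(\J)}$ under control in the pure fourth-moment regime. A direct use of the standard moment inequality produces a residual proportional to $\beta_p^2 p^{2p} n^{-p}\max_{k,l}\E|\G_{kl}^{(\J,j)}|^{2p}$, and since $\beta_p \le \beta_4 n^{p/2-2}\overline R^{p-4}$ this term is catastrophic for large $p$. This is exactly what the restriction to admissible $\LL$ is designed to cure through Lemma~\ref{G_{jk} 0 step}: on such a configuration each row of $\HH$ has at most $O(\log^3 n)$ entries of modulus exceeding $n^{1/4}\underline R$, and one can split the quadratic sum into a bulk contribution over small-entry pairs, where the quadratic-form inequality pays $p^p$ rather than $p^{4p}$, and a short contribution over large-entry pairs, bounded crudely via $|h_{jk}|\le \sqrt n/\overline R$ and Lemma~\ref{G_{jk} 0 step}. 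Balancing these two pieces is precisely what forces the range $q \le A_1(nv)^{1/4}/\underline R$ appearing in the statement. Once the diagonal bound is established, the off-diagonal bound $\E|\G_{jk}^{(\J)}(v)|^q$ for $j \ne k$ follows from the Schur-complement identity $\G_{jk}^{(\J)} = -\G_{jj}^{(\J)}\bigl(n^{-1/2}h_{jk} - n^{-1}\sum_{l \ne j,k} h_{jl}\G_{lk}^{(\J,j)}\bigr)$, whose right-hand side is a product of diagonal factors already bounded and a linear form controlled by the same machinery used for $\varepsilon_{3j}^{(\J)}$.
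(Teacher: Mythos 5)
You have a genuine gap at the central absorption step. Your plan handles the bad event $\Omega_j^c=\{c_0(|T_n^{(\J)}|^{1/2}+|\varepsilon_j^{(\J)}|)>1/2\}$ by the deterministic bound $\|\G^{(\J)}(v)\|\le v^{-1}$ plus Markov, leaving a residual of the form $v^{-q}\bigl(C^q\E|T_n^{(\J)}(v)|^q+C^q\E|\varepsilon_j^{(\J)}(v)|^{2q}\bigr)$, and you assert this can be shown to be much smaller than $C_0^q$. It cannot, in the fourth-moment setting of this lemma. Even keeping only the best available contribution $\E|\varepsilon_j^{(\J)}(v)|^{2q}\approx (Cq/(nv))^q$, the residual is $(Cq/(nv^2))^q$, which explodes as soon as $v\ll n^{-1/2}$, and at the bottom of the range $v\asymp n^{-1}\log^{4(\alpha+1)}n$ it is of size $e^{cq\log n}$. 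Nor can you rescue it by using a higher moment order in Markov: the admissible orders are capped at $A_1(nv)^{1/4}/\underline R\lesssim\log n$, and several pieces of $\E|\varepsilon_j^{(\J)}|^{2m}$ coming from the large-entry blocks (the analogues of $\zeta_{1j}$, $\zeta_{5j}$, $\widehat\zeta_{1j}$, $\varepsilon_{5j}$) are only of size $(Cr/\overline R)^{4m}$ or $\overline R^{-2m}$ times constants to the power $m$, i.e.\ at best quasi-polynomially small, so $\Pb(\Omega_j^c)$ can only be pushed to roughly $e^{-c\log n\log\log n}$, while beating $v^{-q}\approx e^{A_1\log^2 n}$ would require $e^{-\Omega(\log^2 n)}$. (The only place in this paper where a $v^{-p}$ loss is paid against a probability is in the proof of Lemma~\ref{main lemma}, against the event that $\LL$ is inadmissible, whose probability \eqref{bad conf} really is $n^{-c\log^2 n}$; no such bound is available for your $\Omega_j^c$.)

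The paper's proof avoids any event splitting at this stage: starting from Lemma~\ref{lemma: recurence relation for R_jj} it takes $q$-th moments directly, applies Cauchy--Schwarz, and bounds the resulting factors $\E|\G_{jj}^{(\J)}(v)|^{2q}$ and $\E|\G_{jk}^{(\J,j)}(v)|^{2q}$ at the \emph{unknown} scale $v$ by $(s_0C_0)^{2q}$-type constants using the multiplicative monotonicity $\G(v)\le s_0\G(s_0v)$ to pass to the scale $s_0v\ge\tilde v$ where the hypothesis \eqref{main condition 1 0} applies (this is \eqref{G diag est} and the off-diagonal analogue); the self-improvement is thus absorbed multiplicatively inside the moment inequality, with no $v^{-q}$ factor ever appearing, and smallness of $\E^{1/4}|\varepsilon_j^{(\J)}|^{2q}$ and $\E^{1/2}|\varepsilon_j^{(\J)}|^{2q}$ suffices. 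The remaining ingredients of your sketch — the scale transfer via monotonicity and the Ward identity, the split of the quadratic and linear forms according to the admissible configuration into a small-entry bulk (Burkholder/Rosenthal-type bounds) and a short large-entry sum (crude bounds), the provenance of the range $q\le A_1(nv)^{1/4}/\underline R$, and the off-diagonal reduction through the Schur-complement linear form — do match the paper's argument; it is the replacement of the moment-level absorption by a good/bad event dichotomy with the trivial $v^{-1}$ bound that breaks the proof.
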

\begin{proof}
Let us fix an arbitrary \(s_0 \geq 4\) and \(v \geq \tilde v/s_0\), \(\J \subset \T\) such that \(|\J| \le L-1\). In the following let \(j, k \in \T_\J\). 
First we note that for any \( j = 1, \ldots, n\) and \( 1 \le q \le A_1 (nv) \) we may write
\begin{eqnarray}\label{G diag est}
\E|\G_{jj}^{(\J,j)}|^{2q} \le s_0^{2q} \E|\G_{jj}^{(\J,j)}(s_0 v)|^{2q} \le (s_0 C_0)^{2q} 
\end{eqnarray}
(the same inequalities hold for \(\G^{(\J,j)}\) replaced by \( \G^{(\J)}\)). The first inequality follows from the fact that \(\G(v) \le s_0 \G(s_0v)\) (see e.g~\cite{Schlein2014}[Lemma 3.4] or~\cite{GotzeNauTikh2015a}[Lemma C.1]) and the second inequality follows from the assumption~\eqref{main condition 1 0}. 
Moreover, for any \(1 \le j<k \le n\) we have
\begin{eqnarray}\label{ G off diag est}
\E|\G_{jk}^{(\J,j)}(v)|^{2q} &\le& 2^{4q}\E|\G_{jk}^{(\J,j)}(s_0 v)|^{2q} + 2^{2q} s_0^{4q} \E^\frac12|\G_{jj}^{(\J,j)}(s_0 v)|^{2q} \E^\frac12|\G_{kk}^{(\J,j)}(s_0 v)|^{2q} \nonumber  \\
&\le& (3 s_0 C_0)^{2q}.
\end{eqnarray}
Applying Lemma~\ref{lemma: recurence relation for R_jj} we get
\begin{eqnarray*}
\E|\G_{jj}^{(\J)}|^q &\le& (3c_0)^q\Big(1 + \E^\frac{1}{2}|T_n^{(\J)}|^{q}\E^\frac12|\G_{jj}^{(\J)}|^q + \E^\frac12|\varepsilon_j^{(\J)}|^{2q}\E^\frac12|\G_{jj}^{(\J)}|^{2q}\Big) \\
&\le& (3c_0)^q\Big(1 + (s_0 C_0)^\frac q2 \E^\frac{1}{2}|T_n^{(\J)}|^{q} + (s_0 C_0)^q \E^\frac12|\varepsilon_j^{(\J)}|^{2q}\Big).
\end{eqnarray*}
By the Cauchy-Schwarz inequality 
\begin{eqnarray*}\label{eq: T_n est}
\E |T_n^{(\J)}|^{q} &\le&  \left(\frac{1}{n} \sum_{j \in \T_{\J}} \E |\varepsilon_j^{(\J)}|^{2q} \right)^{1/2} \left(\frac{1}{n} \sum_{j \in \T_{\J}} \E |\G_{jj}^{(\J)}(v)|^{2q} \right)^{1/2} \le (s_0C_0)^q \max_{j \in \T_{\J}} \E^{1/2} |\varepsilon_j^{(\J)}|^{2q}.
\end{eqnarray*}
The last two inequalities imply that
\begin{eqnarray*}
\E|\G_{jj}^{(\J)}|^q \le (3c_0)^q \big(1 + (s_0 C_0^q) \big(\max_{j \in \T_{\J}} \E^{1/4} |\varepsilon_j^{(\J)}|^{2q} +  \E^\frac12|\varepsilon_j^{(\J)}|^{2q}\big)\big).
\end{eqnarray*}
It remains to estimate \(\E|\varepsilon_j^{(\J)}|^{2q}\). By an obvious inequality we have
\begin{eqnarray*}
\E |\varepsilon_{j}^{(\J)}|^{2q} \le 5^{2q} (\E |\varepsilon_{1j}^{(\J)}|^{2q} + \ldots +  \E|\varepsilon_{5j}^{(\J)}|^{2q}). 
\end{eqnarray*}	
Let \( \mathbb A_j: = \{ k \in \T: L_{jk} = 0 \} \). For a \(r\)-admissible configuration \(|\mathbb A_j| \le r\). 
It is easy to check that 
\begin{eqnarray}\label{eq: h_jk est}
\E|h_{jk}|^q \le \begin{cases}
\mu_4 \underline R^{q-4} n^{q/4-1}, &\text{ if } k \notin \mathbb A_j\\
n^{q/2} \overline R^{-q}, & \text { if } k \in \mathbb A_j.
\end{cases}
\end{eqnarray}
Moreover, for \(k \notin \mathbb A_j\) 
\begin{eqnarray}
|\E h_{jk}| \le \frac{\mu_4}{\underline R^3 n^{3/4}}.
\end{eqnarray}
The bound for \( \E|\ve_{1q}^{(\J)}|^{2q} \) is the direct corollary of~\eqref{eq: h_jk est}
\begin{eqnarray}\label{ve1}
\E|\ve_{1q}^{(\J)}|^{2q} \le \mu_4 \underline R^{2q-4} n^{-q/2-1} + \overline R^{-2q}.  
\end{eqnarray} 
Let us consider \(\ve_{2j}^{(\J)}\). We may rewrite it as a sum
\(\ve_{2j}^{(\J)} = \zeta_{1j} + \zeta_{2j} + \ldots + \zeta_{6j}\),
where
\begin{eqnarray}
\label{decomposition 1}
	\zeta_{1j}&:=& -\frac{1}{n}\sum_{l \ne k \in \T_{\J,j} \cap \mathbb A_j} \eta_{jk} \eta_{jl} \G_{kl}^{(\J, j)}, \nonumber\\
	\zeta_{2j}&:=& -\frac{1}{n}\sum_{l \ne k \in \T_{\J,j} \setminus \mathbb A_j} (\xi_{jk} - \E \xi_{jk}) (\xi_{jl} - \E \xi_{jl}) \G_{kl}^{(\J, j)}, \nonumber\\
	\zeta_{3j}&:=& -\frac{2}{n}\sum_{l \ne k \in \T_{\J,j} \setminus \mathbb A_j}  (\xi_{jl} - \E \xi_{jl}) (\E \xi_{jk}) \G_{kl}^{(\J, j)}, \nonumber\\
	\zeta_{4j}&:=& -\frac{2}{n}\sum_{l \ne k \in \T_{\J,j} \setminus \mathbb A_j}  (\E \xi_{jl}) (\E \xi_{jk}) \G_{kl}^{(\J, j)}, \nonumber\\
	\zeta_{5j}&:=& -\frac{2}{n}\sum_{l \in \T_{\J,j} \cap \mathbb A_j} \eta_{jl} \sum_{k \in \T_{\J, j,l}\setminus \mathbb A_j} (\xi_{jk} - \E \xi_{jk}) \G_{kl}^{(\J, j)}, \nonumber\\
	\zeta_{6j}&:=& -\frac{2}{n}\sum_{l \in \T_{\J,j} \cap \mathbb A_j} \eta_{jl} \sum_{k \in \T_{\J, j,l}\setminus \mathbb A_j} (\E\xi_{jk}) \G_{kl}^{(\J, j)}.
\end{eqnarray}
Applying a crude bound and~\eqref{ G off diag est} we get
\begin{eqnarray*}
	\E|\zeta_{1j}|^{2q} \le \frac{r^{4q-2}}{n^{2q}} \sum_{l \ne k \in T_{\J,j} \cap \mathbb A_j} \E |\G_{kl}^{(\J, j)}|^{2q} \E|\eta_{jk} \eta_{jl}|^{2q}  \le  \frac{C^q r^{4q} (s_0 C_0)^{2q}}{ \overline R^{4q}}.
\end{eqnarray*}
Using Burkholder's type inequality and~\eqref{eq: h_jk est} we obtain the following estimate 
\begin{eqnarray*}
\E |\zeta_{2j}|^{2q} &\le& \frac{C^q q^q \E \imag^q m_n^{(\J, j)}}{(nv)^q} + \frac{C^q q^{3q} \underline R^{2q-4}}{(n^{3/2} v)^q } \sum_{l \in \T_{\J,j} \setminus \mathbb A_j} \E \imag^q \G_{ll}^{(\J,j)} \\
&+&\frac{C^q q^{4q} \underline R^{4q-8}}{n^2 n^q} \sum_{l \ne k \in \T_{\J,j} \setminus \mathbb A_j}  \E|\G_{kl}^{(\J,j)}|^{2q}.
\end{eqnarray*}
This inequality and ~\eqref{G diag est}--\eqref{ G off diag est}  together imply
\begin{eqnarray}
\E |\zeta_{2j}|^{2q} \le \frac{C^q q^q (s_0 C_0)^q }{(nv)^q} + \frac{C^q q^{3q} \underline R^{2q-4} (s_0 C_0)^q}{ (n^{3/2} v)^q }  + \frac{C^q q^{4q} \underline R^{4q-8} (s_0 C_0)^{2q}}{ n^q} .
\end{eqnarray}
For the term \(\zeta_{3j}\) we use the Rosenthal inequality, see e.g.~\cite{Rosenthal1970}, 
\begin{eqnarray}
\E|\zeta_{3j}|^{2q} &\le& \frac{C^q q^{q} (s_0 C_0)^q}{(n^{3/2}v)^q \underline R^{6q}}.
\end{eqnarray}
Again the crude bound imply 
\begin{eqnarray}
\E |\zeta_{4j}|^{2q} \le \frac{C^q ( s_0 C_0)^{2q} }{(n^2v)^{q} \underline R^{12q}}.
\end{eqnarray}
Similarly, 
\begin{eqnarray}
\E|\zeta_{5j}|^{2q} &\le& \frac{C^q r^{2q-1}}{n^{2q}} \sum_{l \in \T_{\J,j} \cap \mathbb A_j} \E|\eta_{jl}|^{2q} \E \bigg| \sum_{k \in \T_{\J, j,l}\setminus \mathbb A_j} (\xi_{jk} - \E \xi_{jk}) \G_{kl}^{(\J,j)} \bigg|^{2q} \nonumber \\
&\le& \frac{C^q r^{2q} q^{q} (s_0 C_0)^q }{(nv)^{q} \overline R^{2q}} + \frac{C^q r^{2q}  \underline R^{2q-4}q^{2q} ( s_0C_0)^{2q}}{ n^{q/2} \overline R^{2q}}. 
\end{eqnarray}
Finally,
\begin{eqnarray}
\E|\zeta_{6j}|^{2q} \le \frac{C^q r^{2q} (s_0C_0)^{2q}}{ (n^{3/2}v)^q \overline R^{2q} \underline R^{6q}}.
\end{eqnarray}
For the term \(\ve_{3j}^{(\J)}\) we may proceed similarly. We get that \( \ve_{3j}^{(\J)} = \widehat \zeta_{1j} +  \zeta_{2j}\), where
\begin{eqnarray*}
\widehat \zeta_{1j}&:=& -\frac{1}{n}\sum_{k \in T_{\J,j} \cap \mathbb A_j} (\eta_{jk}^2 -\sigma_{jk}^2) \G_{kk}^{(\J, j)}, \\
\widehat \zeta_{2j}&:=& -\frac{1}{n}\sum_{k \in T_{\J,j} \setminus \mathbb A_j} (\xi_{jk}^2 -\sigma_{jk}^2) \G_{kk}^{(\J, j)}.
\end{eqnarray*}
The crude bound implies that
\begin{eqnarray}
\E|\widehat \zeta_{1j}|^{2q} \le \frac{C^q r^{2q} (s_0 C_0)^{2q}}{ \overline R^{4q}}. 
\end{eqnarray}
Applying the Rosenthal inequality we get
\begin{eqnarray}
\E |\widehat \zeta_{2j}|^{2q} &\le& \frac{C^q q^{q}}{n^{2q}} \bigg(\sum_{k \in T_{\J, j} \setminus \mathbb A_j} |\G_{kk}^{(\J, j)}|^2 \bigg)^{q} + \frac{C^q q^{2q} \underline R^{4q-4}}{n^{q+1}} \sum_{k \in T_{\J, j} \setminus \mathbb A_j} |\G_{kk}^{(\J,j)}|^{4q} \nonumber \\
&\le& \frac{C^q q^{q} (s_0 C_0)^{2q}}{n^{q}}  + \frac{C^q q^{2q} \underline R^{4q-4} (s_0 C_0)^{4q}}{n^{q}}.
\end{eqnarray}
It is straightforward to check that
\begin{eqnarray}
	\E|\ve_{4j}^{(\J)}|^{2q} \le \frac{1}{(nv)^{2q}}. 
\end{eqnarray}
Finally, for \(\ve_{5j}^{(\J)}\) we may write
\begin{eqnarray}
\E |\ve_{5j}^{(\J)}|^{2q} \le \frac{C^{2q} r^{2q} (s_0 C_0)^{2q}}{\overline R^{4q}}  + \frac{C^q (s_0 C_0)^{2q}}{n^{q}\underline R^{2q}} .
\end{eqnarray}
The off-diagonal entries \(\G_{jk}^{(\J)}\) may be expressed as follows
\begin{eqnarray*}
\G_{jk}^{(\J)} = -\frac{1}{\sqrt n} \G_{jj}^{(\J)} \sum_{l \in \T_{\J,j}} h_{jl} \G_{lk}^{(\J,j)} .
\end{eqnarray*}
Applying  
\begin{eqnarray*}
\E|\G_{jk}^{(\J)}|^q \le \frac{(s_0 C_0)^q }{n^{q/2}} \E^\frac{1}{2} \bigg| \sum_{l \in \T_{\J,j}} h_{jl} \G_{lk}^{(\J,j)} \bigg|^{2q}.
\end{eqnarray*}
We proceed similarly to the estimation of \(\E|\varepsilon_j^{(\J)}|^{2q}\). For simplicity let us denote
\begin{eqnarray*}
\overline \zeta_{1j}&:=& \frac{1}{\sqrt n} \sum_{l \in \T_{\J, j} \cap \mathbb A_j} \eta_{jk} \G_{lk}^{(\J,j)}, \\
\overline \zeta_{2j}&:=& \frac{1}{\sqrt n}\sum_{l \in \T_{\J, j} \setminus \mathbb A_j} (\xi_{jk} - \E \xi_{jk}) \G_{lk}^{(\J,j)}, \\
\overline \zeta_{3j}&:=& \frac{1}{\sqrt n}\sum_{l \in \T_{\J, j} \setminus \mathbb A_j} (\E \xi_{jk}) \G_{lk}^{(\J,j)}.
\end{eqnarray*}
The crude bound implies that
\begin{eqnarray}
\E|\overline \zeta_{1j}|^{2q} \le \frac{ r^{2q} (s_0 C_0)^{2q} }{ \overline R^{2q}}.
\end{eqnarray}
By Rosenthal's inequality
\begin{eqnarray}
\E|\overline \zeta_{2j}|^{2q} \le \frac{C^q q^{q/2} s_0^q C_0^q }{(nv)^q } + \frac{C^q q^q \underline R^{2q - 4} (s_0 C_0)^{2q}}{n^{q/2}}.
\end{eqnarray}
Finally,
\begin{eqnarray}\label{zeta 3}
\E|\overline \zeta_{j3}|^{2q} \le \frac{C^q (s_0 C_0)^{2q} }{\overline R^{6q} (n^{3/2} v)^{q}}.
\end{eqnarray}

Analysing~\eqref{ve1}--\eqref{zeta 3} it is easy to see that one may choose sufficiently large constant \(A_0\) and small constant \(A_1\) such that 
\begin{eqnarray*}
\E |\G_{jk}|^{q} \le C_0^q. 
\end{eqnarray*}
\end{proof}

\begin{proof}[Proof of Lemma~\ref{G_{jk} 0 step}] Let us choose some sufficiently large constant \(C_0 > \max(1/V, 6 c_0)\), where \(c_0\) is defined in Lemma~\ref{lemma: recurence relation for R_jj}. We also choose \(A_0, A_1\) as in Lemma~\ref{lemma step for resolvent} \(s_{0}: = 2^4\). Let \(L:= [\log_{s_{0}} \tilde V/\tilde v_0]+1\). Since \(\|\G^{(\J)}(V)\| \le V^{-1}\) we may write 
\begin{eqnarray*}
\max_{\J: |\J| \le  L} \max_{l,k \in \T_\J} \E|\G_{lk}^{(\J)}(V)|^p \le C_0^p
\end{eqnarray*}
for all \(u, p\) such that \(|u| < 2\) and \(1 \le p \le A_1 (nv)^{1/4}/\underline R\). Fix arbitrary \(v: V/s_{0} \le  v \leq V\) and \(p: 1 \le p \le (nv/s_0)^{1/4}/\underline R\). Lemma~\ref{lemma step for resolvent} yields that
\begin{eqnarray*}
\max_{\J: |\J| \le L-1} \max_{l, k \in \T_\J} \E|\G_{lk}^{(\J)}(v)|^p \le C_0^p
\end{eqnarray*}
for \(1 \le p \le A_1 (n V/s_0)^{1/4}/\underline R\), \(v \geq V/s_0\). We may repeat this procedure \(L\) times and finally obtain
\begin{eqnarray*}
\max_{l,k \in \T}\E|\G_{lk}(v)|^p \le C_0^p
\end{eqnarray*}
for \(1 \le p \le A_1 (n V /s_0^{L})^{1/4}/\underline R \le A_1 (n \tilde v_0)^{1/4}/\underline R\) and \(v \geq V/s_0^{L} = \tilde v_0\). 
\end{proof}

The previous lemma allows to obtain \(p = A_1 \log^\alpha n\) by taking \(v = \tilde v_0 = A_0 n^{-1} \log^{4(\alpha+1)} n\). Without loss of generality we may consider \(p = A_1 \log^\alpha n\) only (otherwise one may apply Lyapunov's inequality for moments). It follows from Lemma~\ref{G_{jk} 0 step} that for any \(r\)-admissible \(\LL\):
\begin{eqnarray*}
\max_{j,k \in \T} \E |\G_{jk}(v)|^p \le C_0^p
\end{eqnarray*}
for all \(V \geq v \geq \tilde v_0\). We may descent from \(\tilde v_0\) to \(v_0\) while keeping \(p = A_1 \log^\alpha n\).   Indeed, first we may take \(s: = \log^{3\alpha-1} n\) and show that for \(v \geq v_0\)
\begin{eqnarray*}
\max_{j,k \in \T} \E |\G_{jk}(v)|^p \le C_0^p \log^{(3\alpha-1) p} n.
\end{eqnarray*}
It remains to remove the log factor from the r.h.s. of the previous equation. To this aim we shall adopt the moment matching technique which has been successfully  used  recently in~\cite{LeeYin2014} and~\cite{GotzeNauTikh2016a}. 

We consider the pairs \( (j,k): L_{jk} = 1\), and denote by \(\overline \xi_{jk}\) random variables such that: \(|\overline \xi_{jk}| \le D\), for some \(D\) chosen later, and
\begin{eqnarray*}
\E \xi_{jk}^s = \E \overline \xi_{jk}^s \, \text { for } \, s = 1, ... , 4.
\end{eqnarray*}
It follows from~\cite{LeeYin2014}[Lemma~5.2]. that such a set of random variables exists. Let us denote \(\HH^\y: = [h_{jk}^\y]_{j,k=1}^n\) such that
\begin{eqnarray}\label{sub gaussian matrix}
h_{jk}^\y = \begin{cases}
\overline \xi_{jk}, & \text{ if } L_{jk} = 1, \\
\eta_{jk}, & \text{ otherwise.}
\end{cases}
\end{eqnarray}
Introduce \(\G^\y: = (n^{-1/2}\HH^\y - z \I)^{-1}\) and \(m_n^\y(z): = \frac{1}{n} \Tr\G^\y(z)\). Then, in Lemma~\ref{lem: bound in the optimal region} we show that for all \(v \geq v_0\)  and \(5 \le p \le A_1 \log^\alpha n\) there exist positive constants \(C_1, C_2\) such that
\begin{eqnarray*} \label{bound via sub-gaussian case}
\E|\G_{jk}(v)|^p \le C_1^p + C_2 \E|\G_{jk}^\y(v)|^p.
\end{eqnarray*}
It is easy to see that \(\overline \xi_{jk}\) are sub-Gaussian random variables. Repeating the proof of Lemma~\ref{lemma step for resolvent}, see Lemma~\ref{lemma step for resolvent subgauss} in the appendix, we get 
\begin{eqnarray*}\label{sub gaussian case}
\E|\G_{jk}^\y(v)|^p \le H_0^p
\end{eqnarray*}
for some \(H_0 > C_0\).  We omit the details and proceed to the proof of Lemma~\ref{main lemma}.

\begin{proof}[Proof of Lemma~\ref{main lemma}] 
From~\eqref{bad conf} we conclude that
\begin{eqnarray*}
\Pb( \LL \notin \mathcal L_r) \le n^{-c \log^2 n}
\end{eqnarray*}
for some large \(c > A_1\). It is easy to see that 
\begin{eqnarray*}
\E |\RR_{jk}(v)|^p &=& \E \big[ \one[\LL  \in \mathcal L_r] \E(|\RR_{jk}(v)|^p\big| \LL)\big] + \E \big[ \one[\LL  \notin \mathcal L_r] \E(|\RR_{jk}(v)|^p\big| \LL)\big] \\
&\le& H_0^p + \Pb^\frac12 (\LL  \notin \mathcal L_r) \E^\frac12|\RR_{jk}(v)|^{2p} \le H_0^p + v^{-p} \Pb^\frac12 (\LL  \notin \mathcal L_r) \\
&\le& H_0^p + n^{A_1 \log^ \alpha n - c \log^2 n  - \log^{\alpha-1} n \log \log n}  \le H_1^p,  
\end{eqnarray*}
for some \(H_1 > H_0\).
\end{proof}

\section{Estimate of \(T_n\)} \label{sec: bound for T}
In this section we prove Theorem~\ref{th: general bound}.  We will follow the main idea of the proof of corresponding results in~\cite{GotzeNauTikh2018TVP}. The main technical problem is to estimate the r.h.s of~\eqref{important}. Using definiton of \(\xi_j\) we come to the problem of estimation \(\E_j |\varepsilon_{3j}|^2 |\RR_{jj}|\).  Since \(\varepsilon_{3j}\) and \(\RR_{jj}\) are dependent we need to use the Cauchy-Schwarz inequality. Unfortunately, we can only estimate \(\E_j |\varepsilon_{3j}|^2\) without truncation. To estimate higher moments we need to use truncation arguments, i.e. use \(|X_{jk}| \le n^{1/2}/\overline R\). This will lead to the non-optimal bounds. It is worth to mention that in the case when \(\beta_{4+\delta}<\infty\) we can estimate \(\E|\varepsilon_{3j}|^{2 + \delta/2}\) without truncation. To overcome the problem mentioned above we split the r.h.s. of~\eqref{important} into two terms corresponding to \(|\RR_{jj}(z)| \le H_1\) or \(|\RR_{jj}(z)| > H_1\) for some large \(H_1\).  To obtain bounds of order \(n^{-c \log n}\) for  \(\Pb(|\RR_{jj}(z)| > H_1)\) we need to take \(p\) in Lemma~\ref{main lemma} of the order \(c \log^2 n\) (\(\alpha = 2\)). 

To simplify the proof of Theorem~\ref{th: general bound} we will formulate below a simple lemma, which provides a general framework to estimate the moments of some statistics of independent random variables.

Let us consider the following statistic
\begin{eqnarray*}
T_n^{*} := \sum_{j=1}^n \xi_{j} f_{j} + \mathcal R,
\end{eqnarray*}
where  \(\xi_{j}, f_{j}, j = 1, \ldots , n\) and \(\mathcal R\) are \(\mathfrak M\)-measurable r.v. for some \(\sigma\)-algebra \(\mathfrak M\). Assume that there exist \(\sigma\)-algebras \( \mathfrak M^{(1)}, \ldots,  \mathfrak M^{(n)}, \mathfrak M^{(j)}  \subset \mathfrak M, j = 1, \ldots, n\) such that
\begin{eqnarray}\label{assumption 1}
\E_j(\xi_{j} \big |  \mathfrak M^{(j)})   = 0.
\end{eqnarray}
For simplicity we denote \(\E_j(\cdot) := \E(\cdot \big | \mathfrak M^{(j)})\). Let \(\widehat f_{j}\) be arbitrary \(\mathfrak M^{(j)}\)-measurable r.v. and denote 
\begin{eqnarray*}
\tTj := \E_j(T_n^{*}).
\end{eqnarray*}
\begin{lemma}
\label{lem: T_n general lemma}
For all \(p \geq 2\) there exist some absolute constant \(C\) such that
\begin{eqnarray*}
\E|T_n^{*}|^p \le C^p \bigg(\mathcal A + p^\frac p2 \mathcal B + p^p\mathcal C + p^p \mathcal D + \E |\mathcal R|^p \bigg), 
\end{eqnarray*}
where
\begin{eqnarray*}
\mathcal A &:=& \E\left( \frac 1n \sum_{j=1}^n \E_j|\xi_{j} (f_{j} - \widehat f_{j})|\right)^p,\\
\mathcal B &:=& \E\left(\frac 1n \sum_{j=1}^n \E_j(|\xi_j(T_n^{*} - \tTj)|) |\widehat f_{j}|\right)^\frac p 2, \\
\mathcal C &:=& \frac 1n \sum_{j=1}^n \E |\xi_{j}||T_n^{*} - \tTj|^{p-1}|\widehat f_{j}|, \\
\mathcal D &:=& \frac 1n \sum_{j=1}^n \E |\xi_{j}||f - \widehat f_{j}| |T_n^{*} - \tTj|^{p-1}.
\end{eqnarray*}
\end{lemma}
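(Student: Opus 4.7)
The plan is to use the standard $p$th-moment identity $\E|T_n^*|^p = \E(T_n^* \cdot \phi(T_n^*))$ with $\phi(z):=|z|^{p-2}\bar z$, to expand only the first factor via the defining formula for $T_n^*$, and then exploit the conditional structure $\E_j \xi_j = 0$. For each $j$ I would split $f_j = (f_j - \widehat f_j) + \widehat f_j$ and $\phi(T_n^*) = \phi(\tilde T_n^{(j)}) + (\phi(T_n^*) - \phi(\tilde T_n^{(j)}))$. Since $\widehat f_j \phi(\tilde T_n^{(j)})$ is $\mathfrak M^{(j)}$-measurable, the ``main'' cross-term $\frac{1}{n}\sum_j \E(\xi_j \widehat f_j \phi(\tilde T_n^{(j)}))$ vanishes exactly. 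What remains are four contributions: $I_1$ from $(f_j - \widehat f_j)\phi(\tilde T_n^{(j)})$, $I_2$ from $\widehat f_j(\phi(T_n^*)-\phi(\tilde T_n^{(j)}))$, $I_3$ from $(f_j - \widehat f_j)(\phi(T_n^*)-\phi(\tilde T_n^{(j)}))$, and a remainder $I_4$ from $\mathcal R \phi(T_n^*)$.

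The term $I_1$ is the cleanest: pulling the $\mathfrak M^{(j)}$-measurable factor $\phi(\tilde T_n^{(j)})$ outside of $\E_j$ gives $|I_1| \le \frac{1}{n}\sum_j \E(|\tilde T_n^{(j)}|^{p-1}\E_j|\xi_j(f_j-\widehat f_j)|)$, and H\"older's inequality applied across both the sum and the outer expectation, combined with the conditional Jensen inequality $\E|\tilde T_n^{(j)}|^p \le \E|T_n^*|^p$, bounds this by $\mathcal A^{1/p}\bigl(\E|T_n^*|^p\bigr)^{(p-1)/p}$. The remainder $I_4$ is handled identically, producing $\bigl(\E|\mathcal R|^p\bigr)^{1/p}\bigl(\E|T_n^*|^p\bigr)^{(p-1)/p}$.

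For $I_2$ and $I_3$ I plan to invoke the pointwise derivative estimate $|\phi(z)-\phi(w)|\le C(p-1)(|z|+|w|)^{p-2}|z-w|$, so that $|\phi(T_n^*)-\phi(\tilde T_n^{(j)})| \le Cp(|T_n^*|+|\tilde T_n^{(j)}|)^{p-2}|Y_j|$ with $Y_j:=T_n^*-\tilde T_n^{(j)}$, and then split the middle factor via $(|T_n^*|+|\tilde T_n^{(j)}|)^{p-2} \le 2^{p-2}(|\tilde T_n^{(j)}|^{p-2}+|Y_j|^{p-2})$. The $|Y_j|^{p-1}$ sub-piece from $I_2$ and $I_3$ produces exactly $Cp\,\mathcal C$ and $Cp\,\mathcal D$ respectively, which are absorbed into the $p^p\mathcal C+p^p\mathcal D$ allowance. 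For the remaining $|\tilde T_n^{(j)}|^{p-2}|Y_j|$ piece of $I_2$, the $\mathfrak M^{(j)}$-measurable quantity $|\widehat f_j||\tilde T_n^{(j)}|^{p-2}$ can be factored out of the inner conditional, leaving $\E_j|\xi_j Y_j|$; H\"older's inequality with conjugate exponents $p/(p-2)$ and $p/2$ then pairs $|\tilde T_n^{(j)}|^{p-2}$ against $\E|T_n^*|^p$ and the remaining factor against $\mathcal B^{2/p}$, giving $C p(\E|T_n^*|^p)^{(p-2)/p}\mathcal B^{2/p}$. The corresponding piece of $I_3$ is handled by the same split together with $|\tilde T_n^{(j)}|^{p-2}\le C^p(|T_n^*|^{p-2}+|Y_j|^{p-2})$, which ultimately returns a multiple of $p^p\mathcal D$.

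Finally, Young's inequality converts each mixed estimate of the form $X^{1/p}\bigl(\E|T_n^*|^p\bigr)^{(p-1)/p}$ and $X^{2/p}\bigl(\E|T_n^*|^p\bigr)^{(p-2)/p}$ into $C^p X + \varepsilon\E|T_n^*|^p$, letting the $\E|T_n^*|^p$ contributions be absorbed into the left-hand side. The main obstacle I expect is the $|\tilde T_n^{(j)}|^{p-2}|Y_j|$ piece of $I_2$: the naive application of H\"older's inequality only produces a sum of $L^{p/2}$-norms, whereas $\mathcal B$ is a single $L^{p/2}$-norm of a sum, so one has to be careful about the order in which the two H\"older steps (one across the expectation, one across the index $j$) are performed in order to preserve the ``moment-of-a-sum'' structure defining $\mathcal B$ and thus obtain the $p^{p/2}\mathcal B$ term rather than a larger sum-of-moments.
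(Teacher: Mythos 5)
Your overall strategy is the standard one (and, for what it's worth, the paper does not prove this lemma internally at all --- it simply cites \cite{GotzeNauTikh2017a}[Lemma 6.1], whose proof follows the same Stein-type scheme you describe: write \(\E|T_n^*|^p=\E T_n^*\varphi(T_n^*)\) with \(\varphi(z)=\bar z|z|^{p-2}\), split \(f_j=\widehat f_j+(f_j-\widehat f_j)\) and \(\varphi(T_n^*)=\varphi(\tTj)+(\varphi(T_n^*)-\varphi(\tTj))\), and kill the main cross-term with \(\E_j\xi_j=0\)). However, there is a genuine gap precisely at the point that gives the lemma its specific form. For \(I_1\) you claim that ``H\"older across both the sum and the outer expectation, combined with \(\E|\tTj|^p\le\E|T_n^*|^p\)'' yields \(\mathcal A^{1/p}(\E|T_n^*|^p)^{(p-1)/p}\). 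It does not: applying H\"older per index \(j\) and then summing gives the factor \(\big(\frac1n\sum_j\E a_j^p\big)^{1/p}\) with \(a_j:=\E_j|\xi_j(f_j-\widehat f_j)|\), and by Jensen \(\frac1n\sum_j\E a_j^p\ge\E\big(\frac1n\sum_j a_j\big)^p=\mathcal A\), i.e.\ you obtain a bound in terms of a sum of \(p\)-th moments, which can be much larger than the moment of the average that defines \(\mathcal A\). No reordering of plain H\"older steps fixes this, because the weight \(|\tTj|^{p-1}\) depends on \(j\); and you yourself flag the identical difficulty for \(\mathcal B\) as an ``obstacle'' without resolving it, so the decisive step of the proof is missing.

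The missing idea is a conditional-Jensen plus tower-property maneuver that decouples the \(j\)-dependence before any H\"older step: since \(|\tTj|=|\E_j T_n^*|\le\E_j|T_n^*|\), one has \(|\tTj|^{p-1}\le\E_j\big(|T_n^*|^{p-1}\big)\) (and \(|\tTj|^{p-2}\le\E_j(|T_n^*|^{p-2})\) for the \(\mathcal B\)-piece), and because \(a_j\) (resp.\ \(|\widehat f_j|\,\E_j|\xi_j(T_n^*-\tTj)|\)) is \(\mathfrak M^{(j)}\)-measurable, the tower property gives \(\E\big[\E_j(|T_n^*|^{p-1})\,a_j\big]=\E\big[|T_n^*|^{p-1}a_j\big]\). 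Only after this replacement do you sum over \(j\) \emph{inside one expectation} and apply H\"older a single time, which produces \(\E\big[|T_n^*|^{p-1}\tfrac1n\sum_j a_j\big]\le(\E|T_n^*|^p)^{(p-1)/p}\mathcal A^{1/p}\) and, analogously, \((\E|T_n^*|^p)^{(p-2)/p}\mathcal B^{2/p}\); Young's inequality then absorbs \(\E|T_n^*|^p\) as you intend. A smaller but real issue of the same kind occurs in your \(I_3\): after the split, the \(|T_n^*|^{p-2}|T_n^*-\tTj|\) piece is not a multiple of \(\mathcal D\) (which carries \(|T_n^*-\tTj|^{p-1}\)); it needs an extra Young step, e.g.\ \(|\tTj|^{p-2}|T_n^*-\tTj|\le\lambda^{-1}|\tTj|^{p-1}+\lambda^{p-2}|T_n^*-\tTj|^{p-1}\), after which the first piece is handled exactly like \(I_1\) via the tower trick and the second is a \(\mathcal D\)-term. (In the present paper's application the cruder sum-of-moments bound would in fact suffice, since \(\mathcal A\) and \(\mathcal B\) are later estimated by maxima over \(j\); but it does not prove the lemma as stated.)
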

\begin{proof}
See~\cite{GotzeNauTikh2017a}[Lemma 6.1].
\end{proof}
\begin{remark}
We conclude the statement of the last lemma by several remarks. 
\begin{enumerate}
\item It follows from the definition of \( \mathcal A, \mathcal B, \mathcal C, \mathcal D\) that instead of estimation of high moments of \(\xi_j\) one needs to estimate conditional expectation \(\E_j|\xi_j|^\alpha\) for some small \(\alpha\). Typically, \(\alpha \le 4\). 
\item Moreover, to get the desired bounds one needs to choose an appropriate approximation \(\widehat f_j\) of \(f_j\) and estimate \(T_n^{*} - \tTj\). 
\end{enumerate}
\end{remark}
\begin{proof}[Proof of Theorem~\ref{th: general bound}]
Recalling the definition of \(T_n\) (see~\eqref{definition of T}) we may rewrite it in the following way
\begin{eqnarray*}
T_n = \frac{1}{n} \sum_{j=1}^n \varepsilon_{4j} \RR_{jj} + \frac{1}{n} \sum_{\nu = 1}^3 \sum_{j=1}^n \varepsilon_{\nu j} \RR_{jj}.
\end{eqnarray*}
One may see that \(T_n\) is a special case of \(T_n^{*}\), where
\begin{eqnarray*}
\xi_{j}:= \ve_{1 j} + \ve_{2j} + \ve_{3j}, \, f_{j}:= \RR_{jj}, \, \mathcal R: = \frac{1}{n} \sum_{j=1}^n \varepsilon_{4j} \RR_{jj}. 
\end{eqnarray*}	
We estimate each term in Lemma~\ref{lem: T_n general lemma}.  Here, \(\mathfrak M := \sigma\{X_{kl}, k, l \in \T\}\) and \(\mathfrak M^{(j)}: = \sigma\{ X_{kl}, k,l \in \T_j\}, j = 1, \ldots, n\). 
\subsection{Bound for \texorpdfstring{\(\E|\mathcal R|^p \)}{First term}}
Applying the Schur complement formula we get
\begin{eqnarray}\label{eq: distance between traces}
\Tr \RR - \Tr \RR^{(j)} = \big( 1 + \frac{1}{n} \sum_{k,l \in \T_j} X_{jl} X_{jk} [(\RR^{(j)})^2]_{kl} \big )\RR_{jj} = \RR_{jj}^{-1} \frac{ d \RR_{jj}}{dz}.
\end{eqnarray}
Since \( d \RR_{jj} / dz = \RR_{jj}^2\) we rewrite 
\begin{eqnarray*}\label{eq: derivative mn}
\sum_{j=1}^n \varepsilon_{4j} \RR_{jj} = \frac{1}{n}\Tr \RR^2.
\end{eqnarray*}
Hence, using Lemma~\ref{appendix lemma resolvent inequalities 1} we obtain
\begin{eqnarray*} 
|\mathcal R|^p \le \frac{1}{(nv)^p}  \E \imag^p m_n(z) \le \frac{C^p \imag^p m_{sc}(z)}{(nv)^p} + \frac{C^p \E \imag^p \Lambda_n}{(nv)^p}.
\end{eqnarray*}
We may apply the bound \( \imag \Lambda_n \le |T_n|^{1/2}\) (see~\eqref{eq: abs imag lambda main result section}), Young's inequaltity and get
\begin{eqnarray}\label{ R est main} 
|\mathcal R|^p \le \frac{1}{(nv)^p}  \E \imag^p m_n(z) \le \frac{C^p \imag^p m_{sc}(z)}{(nv)^p} + \frac{C^p}{(nv)^{2p}} + \rho \E|T_n|^p,
\end{eqnarray}
where \(0< \rho < 1\).  

\subsection{Bound for \(\mathcal A\)}
The term \( \mathcal A\), may be bounded from above by the following quantity
\begin{eqnarray*}
\max_{j=1, \ldots, n} \E \E_j^p|\xi_{j}|(\RR - \widehat f_j)|.
\end{eqnarray*}
Let us fix an arbitrary \( j = 1, \ldots, n\), and choose
\begin{eqnarray*}
\widehat f_j: = - \frac{1}{z + m_n^{(j)}(z)}. 
\end{eqnarray*}
Then
\begin{eqnarray}\label{approx of R}
\RR_{jj} - \widehat f_j = - \xi_j\, \widehat f_j\, \RR_{jj},
\end{eqnarray}
This equation implies that 
\begin{eqnarray}\label{important}
\E \E_j^p|\xi_{j}|(\RR - \widehat f_j)| = \E |\widehat f_j|^p \E_j^p |\xi_{j}|^2 |\RR_{jj}|.
\end{eqnarray}
Let us take some positive constant \(H_0>C_0\) such that for \(q = c \log^2n\):
\begin{eqnarray}\label{eq: RR condition prob}
\Pb (|\RR_{jj}|\geq H_0) \le \frac{\E|\RR_{jj}|^q}{H_0^q} \le \bigg( \frac{C_0}{H_0} \bigg)^q \le \frac{1}{n^{c_1 \log n}}.
\end{eqnarray}
It is straightforward to check that
\begin{eqnarray}\label{ A est 1}
\E |\widehat f_j|^p \E_j^p |\xi_{j}|^2 |\RR_{jj}| \one[|\RR_{jj}| \le H_0] &\le& H_0^p \E |\widehat f_j|^p \E_j^p |\xi_{j}|^2 \nonumber\\
&\le& \frac{C^p \imag^p m_{sc}(z)}{(nv)^p} + \frac{C^p}{(nv)^{2p}} + \rho \E|T_n|^p. 
\end{eqnarray}
Moreover, from~\eqref{eq: RR condition prob} and negligibility of high moments of \(\xi_j\)  we may conclude that
\begin{eqnarray}\label{ A est 2}
\E |\widehat f_j|^p \E_j^p |\xi_{j}|^2 |\RR_{jj}| \one[|\RR_{jj}| > H_0] &\le& C_0^p \Pb^{1/2}(|\RR_{jj}| > H_0) \E^\frac14 \E_j^{4p} |\xi_{j}|^2 |\RR_{jj}|\nonumber \\
&\le& C^p n^{-c_1 \log n/2} \E^\frac14 \E_j^{4p} |\xi_{j}|^2 |\RR_{jj}| \nonumber\\
&\le& C^p n^{-p}.
\end{eqnarray}
The last two inequalities~\eqref{ A est 1} and~\eqref{ A est 2} imply that
\begin{eqnarray}\label{ A est main} 
\mathcal A \le \frac{C^p \imag^p m_{sc}(z)}{(nv)^p} + \frac{C^p}{(nv)^{2p}} + \rho \E|T_n|^p. 
\end{eqnarray}

\subsection{Bound for \texorpdfstring{\(\mathcal B\)}{Second term}}
We note that
\begin{eqnarray}\label{B est}
\mathcal B^p \le \max_{j = 1, \ldots, n} \E |\widehat f_{j}|^{p/2} \E_j^{p/2} |\xi_j(T_n - \tTj)|. 
\end{eqnarray}
We estimate the conditional expectation. Let 
\begin{eqnarray}
I := \E_j (|\xi_j(T_n - \tTj)| \zeta), 
\end{eqnarray}
where \(\zeta\) is positive r.v. with sufficiently many bounded moments. For example, to estimate the r.h.s. of~\eqref{B est} one may take \(\zeta := 1\). But for further analysis it will be necessary to consider more general \(\zeta\). 

\vspace{0.1in}
\noindent
{\it Representation of \(T_n - \widetilde T_n^{(j)}\)}.  By definition we may write the following representation
\begin{eqnarray*}
T_n - \widetilde T_n^{(j)} = (\Lambda_n - \widetilde \Lambda_n^{(j)})b_n + \widetilde \Lambda_n^{(j)}(b_n - \widetilde b_n^{(j)}) - \E_j(\Lambda_n(b_n - \widetilde b_n^{(j)})),
\end{eqnarray*}
where \(\widetilde \Lambda_n^{(j)}: = \E_j (\Lambda_n)\) and \(\widetilde b_n^{(j)}:= \E_j(b_n)\). Hence,
\begin{eqnarray}\label{T - tT}
|T_n - \widetilde T_n^{(j)}| \le K^{(j)}|\Lambda_n - \widetilde \Lambda_n^{(j)}| + \frac{2}{nv} |\tTj|^{1/2}, 
\end{eqnarray}
where
\begin{eqnarray}\label{K def}
K^{(j)}: = K^{(j)}(z): = |b(z)| + 2|\tTj|^{1/2} + \frac{2}{nv}.
\end{eqnarray}
The equation~\eqref{eq: distance between traces} and Lemma~\ref{appendix lemma resolvent inequalities 1}[Inequality~\eqref{appendix lemma resolvent inequality 2}] yield that
\begin{eqnarray*}\label{eq: difference between Lambda and Lambda j}
|\Lambda_n - \Lambda_n^{(j)}| \le \frac{1}{nv} \frac{\imag \RR_{jj}}{|\RR_{jj}|}.
\end{eqnarray*}
For simplicity we denote the quadratic form in~\eqref{eq: distance between traces} by
\begin{eqnarray*}
\eta_j: = \frac{1}{n} \sum_{k,l \in \T_j} X_{jl} X_{jk} [(\RR^{(j)})^2]_{kl}
\end{eqnarray*}
and rewrite it as a sum of the three terms \(\eta_j = \eta_{0j} + \eta_{1j} + \eta_{2j}\), where
\begin{eqnarray*}
\eta_{0j}&:=& \frac{1}{n} \sum_{k \in T_j} [(\RR^{(j)})^2]_{kk} = (m_n^{(j)}(z))', \quad \eta_{1j}: = \frac{1}{n} \sum_{k \neq l \in \T_j} X_{jl} X_{jk} [(\RR^{(j)})^2]_{kl}, \\
\eta_{2j}&: =& \frac{1}{n} \sum_{k \in \T_j} [X_{jk}^2 - 1] [(\RR^{(j)})^2]_{kk}.
\end{eqnarray*}
It follows from~\eqref{eq: distance between traces} and \(\Lambda_n - \widetilde \Lambda_n^{(j)} = \Lambda_n - \Lambda_n^{(j)} - \E_j(\Lambda_n - \Lambda_n^{(j)})\) that
\begin{eqnarray*}
\Lambda_n - \widetilde \Lambda_n^{(j)} = \frac{1 + \eta_{j0}}{n} [\RR_{jj} - \E_j(\RR_{jj})] + \frac{\eta_{1j} +\eta_{2j}}{n}\RR_{jj}
- \frac{1}{n}\E_j((\eta_{j1} + \eta_{j2})\RR_{jj}).
\end{eqnarray*}
Using representation ~\eqref{approx of R} we estimate 
\begin{eqnarray*}
|\RR_{jj} - \E_j(\RR_{jj})| \le |\widehat f_j|( |\xi_j \RR_{jj}| + \E_j(|\xi_j \RR_{jj}|).
\end{eqnarray*}
Applying this inequality and Lemma~\ref{appendix lemma resolvent inequalities 1}[Inequality~\eqref{appendix lemma resolvent inequality 2}] we may write
\begin{eqnarray*}
|\Lambda_n - \widetilde \Lambda_n^{(j)}| \le \frac{1 + v^{-1}\imag m_n^{(j)}(z)}{n} |\widehat f_j|( |\xi_j \RR_{jj}| + \E_j(|\xi_j \RR_{jj}|) + \frac{|\widehat\eta_{j}\RR_{jj}|}{n}
+ \frac{\E_j(|\widehat \eta_{j}\RR_{jj}|)}{n}.
\end{eqnarray*}
Then
\begin{eqnarray*}
\E(\xi_{j} |T_n - \widetilde T_n^{(j)}||\zeta|\big|\mathfrak M^{(j)}) \le
|K^{(j)}| [B_{1j} + ... + B_{4j}] + |\tTj|^{1/2} B_{5j},
\end{eqnarray*}
where
\begin{eqnarray*}
B_{1j}&:=& \frac{1 + v^{-1}\imag m_n^{(j)}(z)}{n} |\widehat f_j| \E_j( |\xi_{ j}|^2|\RR_{jj} \zeta|), \\
B_{2j}&:=& \frac{1 + v^{-1}\imag m_n^{(j)}(z)}{n} |\widehat f_j| \E_j(|\xi_j \RR_{jj}|) \E_j( |\xi_{j} \zeta|),\\
B_{3j}&:=& \frac{1}{n}\E_j(|\xi_{ j} \widehat \eta_{j} \RR_{jj} \zeta|),\\
B_{4j}&:=& \frac{1}{n}\E_j(|\widehat \eta_{j} \RR_{jj}|) \E_j(|\xi_{j} \zeta|),\\
B_{5j}&:=& \frac{1}{n v} \E_j(|\xi_{j}\zeta| ).
\end{eqnarray*}
Hence, taking \(\zeta = 1\), we may estimate
\begin{eqnarray*}
\mathcal B  \le C^p \max_j \bigg[ \sum_{\nu=1}^{4}\E |\widehat f_{j}|^{p/2} |K^{(j)}|^{p/2} B_{\nu j}^{p/2} +  \E |\widehat f_{j}|^{p/2}  |\tTj|^{p/2}  B_{5j}^{p/2}\bigg] =: \mathcal I_1 + \ldots + \mathcal I_5. 
\end{eqnarray*}
It is straightforward to check that 
\begin{eqnarray*}
\mathcal I_5 \le \frac{\imag^{p/4} m_{sc}(z)}{(nv)^{3p/4}} \E^\frac12 |T_n|^{p} + \frac{C^p}{(nv)^{3p/4}} \E^{5p/8} |T_n|^p. 
\end{eqnarray*}
We proceed to estimation of \(\mathcal I_3\). The arguments for all other terms are similar and will be omitted. It follows from~\eqref{K def} that
Hence,
\begin{eqnarray*}
\E |\widehat f_{j}|^{p/2} |K^{(j)}|^{p/2} B_{3 j}^{p/2} &\le& C^p \E^\frac12|K^{(j)}|^p \E^\frac14 B_{3j}^{2p} \\
&\le & C^p |b(z)|^{p/2}\E^\frac14 B_{3j}^{2p} + \frac{C^p}{(nv)^{p/2}}\E^\frac14 B_{3j}^{2p} + C^p \E^{1/4} |T_n|^{p}\E^{1/4} B_{3j}^{2p}.
\end{eqnarray*}
It remains to estimate \( \E B_{3j}^{2p} \). Applying the Cauchy-Schwarz inequality and arguments similar to~\eqref{eq: RR condition prob}--\eqref{ A est 2} one may write
\begin{eqnarray*}
\E|B_{3j}|^{2p} \le \frac{1}{n^{2p}} \E^{1/2} \big[\E_j^{2p} |\widehat \eta_j|^2 \big]\E^{1/2}\big[ \E_j^{2p}|\xi_j|^2 |\RR_{jj}|^2\big] \le \frac{C^p \imag^{2p} m_{sc}(z)}{(nv)^{4p}} + \frac{C^p}{(nv)^{4p}} \E|T_n|^p. 
\end{eqnarray*} 
Here we also use the moment bounds for quadratic and linear forms, see~\cite{GotzeNauTikh2015a}[Lemmas A.3--A.12]. Repeating the same arguments for \(\mathcal I_1, \mathcal I_2, \mathcal I_3\) we  come to the following bound
\begin{eqnarray*}
\mathcal B  &\le& \frac{C^p |b(z)|^{p}}{(nv)^p}  + \frac{C^p|b(z)|^{p/2}}{(nv)^p}\E^\frac14 |T_n|^p + \frac{C^p \imag^{p/2} m_{sc}(z)}{(nv)^{3p/2}} + \frac{C^p}{(nv)^{3p/2}} \E^\frac14 |T_n|^p \\
&+& \frac{C^p \imag^{p/2} m_{sc}(z)}{(nv)^p} \E^\frac14 |T_n|^p + \frac{C^p}{(nv)^p} \E^\frac12 |T_n|^p.
\end{eqnarray*}
where we also used the crude estimate \(\imag m_{sc}(z) \le |b(z)| \). Using Young's inequality we immediately obtain
\begin{eqnarray}\label{ B est main} 
p^{p/2} \mathcal B \le \frac{C^p p^{p/2} |b(z)|^{p}}{(nv)^p}  + \frac{C^p p^p}{(nv)^{2p}} +  \rho \E|T_n|^p.
\end{eqnarray}
\subsection{Bound for \texorpdfstring{\(\mathcal C\) and \(\mathcal D\)}{Third term}} 
It is easy to see that one may estimate \(\mathcal C\) and \(\mathcal D\) simultaneously. Indeed, it is enough to estimate
\begin{eqnarray*}
C' : = \max_{j=1, \ldots, n} \E |\xi_{j}||T_n - \tTj|^{p-1}|\zeta|,
\end{eqnarray*}
where \(\zeta: = \max(|f_j|,|\widehat f_j|)\). Let us fix \(j = 1, \ldots,, n\). Using~\eqref{T - tT} we get
\begin{eqnarray*}
\E |\xi_{j}||T_n - \tTj|^{p-1}|\zeta| &\le& \frac{C^p |b(z)|^{p-2}}{(nv)^{p-2}}\E \E_j|\xi_{j}| |T_n - \tTj| |\zeta| \\
&+& \frac{C^p}{(nv)^{p-2}} \E |\tTj|^\frac{p-2}{2} \E_j|\xi_{j}| |T_n - \tTj| |\zeta|.
\end{eqnarray*} 
Applying Young's inequality we obtain
\begin{eqnarray*}
p^{p} \max(\mathcal C, \mathcal D) \le \frac{C^p p^p |b(z)|^p}{(nv)^{p}} + \frac{C^p p^{2p} }{(nv)^{2p}}  + p^{p/2} \max_{j = 1, \ldots, n} \E \E_j^{p/2} |\xi_j(T_n - \tTj)||\zeta| + \rho \E |T_n|^p.
\end{eqnarray*}
Similarly to~\eqref{eq: RR condition prob}
\begin{eqnarray*}
\Pb(|\zeta| \geq H_0) \le n^{-c \log n}.
\end{eqnarray*}
Repeating now all calculations above we get
\begin{eqnarray}\label{ CD est main} 
\max( \mathcal C, \mathcal D ) \le \frac{C^p p^p |b(z)|^{p}}{(nv)^p} +  \frac{C^p p^{2p}}{(nv)^{2p}} + \rho \E |T_n|^p.
\end{eqnarray}
Collecting~\eqref{ R est main}, \eqref{ A est main}, \eqref{ B est main}  and \eqref{ CD est main} we conclude the claim of the Theorem~\ref{th: general bound}. 
\end{proof}

\section{Acknowledgements} 
We would like to thank the Associate Editor and the Reviewer for helpful comments and suggestions.

Results have been obtained under support of the RSF grant No. 18-11-00132 (HSE University).  F. G\"otze has been supported by DFG through the Collaborative Research Centres 1283  "Taming uncertainty and profiting from randomness and low regularity in analysis, stochastics and their applications".

\appendix
\section{Auxiliary results}
\subsection{Truncation}

In this section we will show that the  conditions \(\Cond\) allows us to assume that  for all \(1 \le j,k \le n\) we have \(|X_{jk}| \le \sqrt n/\overline R\), where \(\overline R\) is some positive constant .

Let \(\hat X_{jk}: = X_{jk} \one[|X_{jk}| \leq \sqrt{n}/\overline R]\), \(\tilde X_{jk}: = X_{jk} \one[|X_{jk}| \geq  \sqrt{n}/\overline R] - \E X_{jk} \one[|X_{jk}| \geq \sqrt n/\overline R ]\) and finally
\(\breve X_{jk}: = \tilde X_{jk} \sigma^{-1}\), where \(\sigma^2: = \E |\tilde X_{11}|^2\). We denote symmetric random matrices by \(\hat \X, \tilde \X\) and \(\breve \X\) formed from \(\hat X_{jk}, \tilde X_{jk}\) and \(\breve X_{jk}\) respectively. Similar notations are used for the corresponding resolvent matrices, ESD and Stieltjes transforms.
\begin{lemma}\label{appendix: lemma trunc 1} Assuming the conditions \(\Cond\) we have for all \(1 \le p \le A_1 \log n\)
\begin{eqnarray*}
\E|m_n(z) - \hat m_n(z)|^p \le \left(\frac{C \overline R^4}{nv}\right)^p.
\end{eqnarray*}
\end{lemma}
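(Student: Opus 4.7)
The plan is to bypass any detailed resolvent analysis and instead combine the Bai rank inequality with a simple moment bound on the sparse count of ``bad'' entries.

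First, recall that for any two Hermitian matrices $A, B$ of order $n$ the Bai rank inequality gives $\sup_x |F^A(x) - F^B(x)| \le \Rank(A - B)/n$. An integration by parts (the boundary terms vanish because $F^A - F^B \to 0$ at $\pm\infty$) yields
\begin{equation*}
m_A(z) - m_B(z) = \int_{-\infty}^{\infty} \frac{F^A(x) - F^B(x)}{(x - z)^2}\, dx,
\end{equation*}
and combining with $\int_{\R} |x-z|^{-2}\, dx = \pi/v$ produces the key estimate
\begin{equation*}
|m_n(z) - \hat m_n(z)| \le \frac{\pi \Rank(\W - \hat\W)}{n v}.
\end{equation*}

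Next, the matrix $\W - \hat\W$ is symmetric with support exactly on the positions $(j,k)$ where $|X_{jk}| > \sqrt n/\overline R$, so its rank is at most twice the number of rows containing a bad entry, and hence at most $2N$, where
\begin{equation*}
N := \sum_{1 \le j \le k \le n} \one[|X_{jk}| > \sqrt n/\overline R].
\end{equation*}
This gives the deterministic pointwise bound $|m_n - \hat m_n| \le 2\pi N/(nv)$.

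It remains to estimate $\E N^p$. By Markov's inequality together with $(\mathbf{C0})$, the indicators $Z_{jk} := \one[|X_{jk}| > \sqrt n/\overline R]$ are independent Bernoullis with $\Pb(Z_{jk} = 1) \le \beta_4 \overline R^4/n^2$, so $\mu := \E N \le \beta_4 \overline R^4$. A Rosenthal-type moment inequality for sums of independent $[0,1]$-valued random variables (applied to the centred summands and combined with the trivial bound $(\E N)^p$) yields
\begin{equation*}
\E N^p \le C^p \bigl[\mu^p + (\mu p)^{p/2} + \mu p^p\bigr].
\end{equation*}
For $\overline R \ge \log^3 n$ and $p \le A_1 \log n$, each of the three summands is controlled by $(C\overline R^4)^p$: the first directly; the second because $\mu p \le \beta_4 \log n \cdot \overline R^4 \ll \overline R^8$; and the third because $p \le \log n \ll \log^{12} n = \overline R^4$. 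Substituting into the rank bound gives the stated estimate.

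The only delicate point lies in the final step: Rosenthal's inequality produces a $p^p$ contribution that would be disastrous in a general setting, and it is only because the truncation level $\overline R$ is taken so large that it is absorbed into $(\overline R^4)^p$. Apart from this quantitative check, no recursion, Schur complement, or resolvent manipulation is needed.
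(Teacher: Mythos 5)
Your argument is correct and follows essentially the same route as the paper's own proof: Bai's rank inequality combined with integration by parts reduces the problem to moments of the count of entries exceeding \(\sqrt n/\overline R\), after which Markov's inequality (using \(\beta_4\)) and a Rosenthal-type bound control \(\E N^p\), with the \(p^{p}\) contribution absorbed into \((C\overline R^{4})^{p}\) exactly as in the paper.
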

\begin{proof}
From Bai's rank inequality (see~\cite{BaiSilv2010}[Theorem~A.43]) we conclude that
\begin{eqnarray*}
	\sup_{x \in \R} |\mu_n((-\infty, x]) - \hat {\mu}_n((-\infty, x])| \le \frac{1}{n} \Rank(\X - \hat \X) \le \frac{1}{n} \sum_{j,k=1}^n \one[|X_{jk}| \geq \sqrt n/\overline R].
\end{eqnarray*}
Integrating by parts we get
\begin{eqnarray*}
	\E|m_n(z) - \hat m_n(z)|^p \le \frac{1}{(nv)^p} \E \left(\sum_{j,k=1}^n \one[|X_{jk}| \geq  \sqrt n/\overline R] \right)^p.
\end{eqnarray*}
It is easy to see that
\begin{eqnarray*}
\left(\sum_{j,k=1}^n \E\one[|X_{jk}| \geq \sqrt n /\overline R] \right)^p \le C^p \overline R^{4p}.
\end{eqnarray*}
Applying Rosenthal's inequality, \cite{Rosenthal1970}, we get that
\begin{eqnarray*}
&&\E \left(\sum_{j,k=1}^n [\one[|X_{jk}| \geq \sqrt n/\overline R] - \E\one[|X_{jk}| \geq  \sqrt n/\overline R]] \right)^p \\
&&\qquad\qquad\qquad\le C^p \left( \left(\frac{p \overline R^4}{n^2} \sum_{j,k=1}^n \E |X_{jk}|^{4} \right)^\frac{p}{2}  + \frac{p^p \overline R^4}{n^2} \sum_{j,k=1}^n \E |X_{jk}|^{4}  \right ) \le (C \sqrt p \overline R^2)^p.
\end{eqnarray*}
From these inequalities we may conclude the statement of Lemma.
\end{proof}

\begin{lemma}\label{appendix: lemma trunc 0}
Assuming the conditions \(\Cond\) we have for all \(1 \le p \le A_1 \log n\)
\begin{eqnarray*}
\E|\tilde m_n(z) - \breve m_n(z)|^p \le \frac{(C \overline R^2)^p \mathcal A^p(2p)}{(nv)^p}.
\end{eqnarray*}
\end{lemma}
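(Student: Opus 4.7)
The plan is to exploit the scaling relation $\breve\W=\sigma^{-1}\tilde\W$, which yields the Stieltjes transform identity $\breve m_n(z)=\sigma\tilde m_n(\sigma z)$, together with the fact that under the fourth moment hypothesis $\sigma=\sqrt{\E|\tilde X_{11}|^2}$ differs from $1$ by at most $O(\overline R^2/n)$. Setting $\psi(s):=s\tilde m_n(sz)$ one has $\psi(1)=\tilde m_n(z)$ and $\psi(\sigma)=\breve m_n(z)$, so
\[
\tilde m_n(z)-\breve m_n(z)=\int_\sigma^1\psi'(s)\,ds,
\]
and H\"older's inequality then gives $|\tilde m_n(z)-\breve m_n(z)|^p\le|1-\sigma|^{p-1}\int_\sigma^1|\psi'(s)|^p\,ds$.

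A direct calculation using the spectral decomposition of $\tilde\W$ yields $\psi'(s)=\tfrac{1}{n}\sum_j\tilde\lambda_j/(\tilde\lambda_j-sz)^2$, and the identity $|\tilde\lambda_j-sz|^{-2}=\imag[(\tilde\lambda_j-sz)^{-1}]/\imag(sz)$ combined with the triangle inequality gives
\[
|\psi'(s)|\le\|\tilde\W\|\cdot\frac{\imag\tilde m_n(sz)}{sv}\le C\|\tilde\W\|\cdot\frac{\imag\tilde m_n(sz)}{v}\qquad(s\in[\sigma,1],\ n\text{ large}).
\]
Next I would bound the two factors in expectation separately. A standard Bai--Yin-type concentration estimate, transferred from $\breve\W$ to $\tilde\W=\sigma\breve\W$, yields $\E\|\tilde\W\|^{2p}\le C^{2p}$, while the scaling $\tilde m_n(w)=\sigma^{-1}\breve m_n(w/\sigma)$ combined with Lemma~\ref{main lemma} applied to $\breve\W$ at the point $sz/\sigma\in\mathbb D_\alpha$ gives $\E|\imag\tilde m_n(sz)|^{2p}\le C^{2p}\mathcal A^{2p}(2p)$. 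The Cauchy--Schwarz inequality then produces $\E|\psi'(s)|^p\le C^p\mathcal A^p(2p)/v^p$ uniformly in $s\in[\sigma,1]$.

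The variance gap $|1-\sigma|$ is controlled directly via the fourth moment: expanding $\sigma^2=\E|\tilde X_{11}|^2$ and applying Markov's inequality in the form $\E|X_{11}|^2\one[|X_{11}|>\sqrt n/\overline R]\le \overline R^2\beta_4/n$ yields $|1-\sigma|\le C\overline R^2/n$. Plugging everything back into the H\"older bound,
\[
\E|\tilde m_n(z)-\breve m_n(z)|^p\le|1-\sigma|^{p-1}(1-\sigma)\cdot\frac{C^p\mathcal A^p(2p)}{v^p}\le\frac{(C\overline R^2)^p\mathcal A^p(2p)}{(nv)^p},
\]
which is the desired estimate.

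The principal technical obstacle is that $\tilde\W$ has entries of variance $\sigma^2\neq 1$ and so does not literally satisfy \CondTwo, whereas Lemma~\ref{main lemma} is formulated under that assumption. The remedy is to transfer every resolvent moment bound from $\tilde\W$ to $\breve\W$ (which does satisfy \CondTwo, with a slightly adjusted truncation level) via the scaling identity, and to verify that the perturbed spectral parameters $sz/\sigma$ with $s\in[\sigma,1]$ remain inside the admissible region $\mathbb D_\alpha$; this is automatic once $n$ is large enough, since $\sigma=1+O(\overline R^2/n)$ and $v\ge v_0=A_0 n^{-1}\log^2 n$.
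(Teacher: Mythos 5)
Your overall architecture is sound and is a genuine variant of the paper's argument: you exploit the same scaling identity (in the form $\breve m_n(z)=\sigma\tilde m_n(\sigma z)$, equivalent to the paper's $\tilde\RR(z)=\sigma^{-1}\breve\RR(\sigma^{-1}z)$), and the same variance estimate $|1-\sigma|\le C\overline R^2/n$, but you implement the comparison by interpolating $\psi(s)=s\tilde m_n(sz)$ and integrating $\psi'(s)=\frac1n\sum_j\tilde\lambda_j(\tilde\lambda_j-sz)^{-2}$, whereas the paper applies the resolvent identity $\breve\RR(z)-\breve\RR(\sigma^{-1}z)=(z-\sigma^{-1}z)\breve\RR(z)\breve\RR(\sigma^{-1}z)$ and controls $\frac1n|\Tr\breve\RR(z)\breve\RR(\sigma^{-1}z)|\le\frac1n\|\breve\RR(z)\|_2\|\breve\RR(\sigma^{-1}z)\|_2$ by the Ward identity, so that everything reduces to moments of $\imag\breve m_n$ and of the diagonal resolvent entries, i.e.\ to Lemma~\ref{main lemma} applied to $\breve\X$, which satisfies \CondTwo. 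Your H\"older step, the derivative formula, and the use of Lemma~\ref{main lemma} at the slightly shifted spectral parameters are all fine.

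The one step that does not go through as written is the bound $\E\|\tilde\W\|^{2p}\le C^{2p}$ for $p$ up to $A_1\log n$, which you invoke as ``a standard Bai--Yin-type concentration estimate.'' Under \Cond{} the Bai--Yin theorem gives only almost sure convergence of the norm, not high-moment or super-polynomial tail bounds, and the paper deliberately avoids any operator-norm input (its own extreme-eigenvalue control under these assumptions yields only probability $1-\log^{-1}n$). For the truncated, centered matrix $\tilde\W$, whose entries are bounded by $2/\overline R\le 2\log^{-3}n$ after normalization, the claimed moment bound is in fact true, but it requires a separate F\"uredi--Koml\'os/Vu-type expectation estimate plus a concentration argument and a crude deterministic cutoff; none of this is in the paper, so you would have to prove it. The cleaner repair is to avoid the norm entirely: in your formula for $\psi'(s)$ write $|\tilde\lambda_j|\le|\tilde\lambda_j-sz|+|sz|$, which gives $|\psi'(s)|\le\frac1n\sum_j|\tilde\lambda_j-sz|^{-1}+|z|\,\imag\tilde m_n(sz)/(sv)\le\big(\imag\tilde m_n(sz)/(sv)\big)^{1/2}+C\,\imag\tilde m_n(sz)/(sv)$ by Cauchy--Schwarz. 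Then only moments of $\imag\breve m_n$ at points of $\mathbb D_\alpha$ are needed (exactly the inputs the paper uses), the extra square-root term is dominated by the stated bound because $\mathcal A(2p)\ge\imag m_{sc}(z)\ge cv$, and your H\"older step delivers $\E|\tilde m_n(z)-\breve m_n(z)|^p\le(C\overline R^2)^p\mathcal A^p(2p)/(nv)^p$ as claimed.
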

\begin{proof}
It is easy to see that
\begin{eqnarray}\label{eq: tilde R representation}
\tilde \RR(z) = (\tilde \W - z\I)^{-1} = \sigma^{-1} (\breve \W - z \sigma^{-1}\I)^{-1}  = \sigma^{-1}\breve \RR(\sigma^{-1}z).
\end{eqnarray}
Applying the resolvent equality we get
\begin{eqnarray}\label{eq: overline R representation}
\breve \RR(z) - \breve \RR(\sigma^{-1}z) = (z - \sigma^{-1}z) \breve \RR(z) \breve \RR(\sigma^{-1}z).
\end{eqnarray}
From~\eqref{eq: tilde R representation} and~\eqref{eq: overline R representation} we may conclude
\begin{eqnarray*}
|\tilde m_n(z) - \breve m_n(z)| &=& \frac{1}{n} | \Tr \tilde \RR(z) - \Tr \breve \RR(z)| = \frac{1}{n} | \sigma^{-1}\Tr \breve \RR(\sigma^{-1}z) - \Tr \breve \RR(z)| \\
&=& \frac{1}{n} | \sigma^{-1}\Tr \breve \RR(z) - \Tr \breve \RR(z)- (z - \sigma^{-1}z) \Tr \breve \RR(z) \breve \RR(\sigma^{-1}z)|\\
&\le& \frac{1}{n}(\sigma^{-1} - 1) |\Tr \breve \RR(z)| + (\sigma^{-1} - 1)  \frac{|z|}{n}|\Tr \breve \RR(z) \breve \RR(\sigma^{-1}z)|.
\end{eqnarray*}
Taking the \(p\)-th power and mathematical expectation we get
\begin{eqnarray*}
\E|\tilde m_n(z) - \breve m_n(z)|^p \le  \frac{1}{n^p}(\sigma^{-1} - 1)^p \E|\Tr \breve \RR(z)|^p + (\sigma^{-1} - 1)^p  \frac{C^p}{n^p}\E|\Tr \breve \RR(z) \breve \RR(\sigma^{-1}z)|^p.
\end{eqnarray*}
Since \(\breve \X\) satisfies conditions \(\CondTwo\) we may apply  Lemma~\ref{main lemma} and conclude
\begin{eqnarray*}
\frac{1}{n^p} \E|\Tr \breve \RR(z)|^p \le C_0^p.
\end{eqnarray*}
We also have
\begin{eqnarray}\label{appendix variance truncation}
\sigma^{-1} - 1 \le \sigma^{-1} (1  - \sigma) \le \sigma^{-1} (1  - \sigma^2) \le \sigma^{-1} \E |X_{jk}|^2 \one[|X_{jk}| \geq \sqrt n/\overline R]   \le C \overline R^2/n.	
\end{eqnarray}
To finish the proof it remains to estimate the term
\begin{eqnarray*}
\frac{1}{n^p}\E|\Tr \breve \RR(z) \breve \RR(\sigma^{-1}z)|^p.
\end{eqnarray*}
Applying the obvious inequality \(|\Tr \A \B| \le \|\A\|_2 \| \B\|_2\) we get
\begin{eqnarray*}
\frac{1}{n^p}\E|\Tr \breve \RR(z) \breve \RR(\sigma^{-1}z)|^p &\le& \frac{1}{n^p} \E^\frac12\|\breve \RR(z)\|_2^{2p} \E^\frac{1}{2}\| \breve \RR(\sigma^{-1}z) \|_2^{2p}\\
&\le& \frac{\E^\frac12 \imag^p \breve m_n(z) \E^\frac12 \imag^p \breve m_n(\sigma^{-1} z)}{v^p}.
\end{eqnarray*}
From this inequality and~\eqref{appendix variance truncation} we conclude the statement of the lemma.
\end{proof}

\begin{lemma}\label{appendix: lemma trunc 2}
Assuming the conditions \(\Cond\) we have for all \(1 \le p \le A_1 \log n\):
\begin{eqnarray*}
\E|\tilde m_n(z) - \hat m_n(z)|^p \le \frac{(C\overline R^3)^p}{(nv)^{3p/2}}.
\end{eqnarray*}
\end{lemma}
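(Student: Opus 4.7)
The plan is to exploit that $\tilde{\X}-\hat{\X}$ is a \emph{deterministic} matrix with uniformly small entries, apply the resolvent identity, and estimate the resulting trace by a Hilbert--Schmidt norm bound, which will let us avoid losing a full power of $v$.

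First, I would observe that the entry of the difference matrix $\D := \tilde{\X}-\hat{\X}$ is $-\E\hat X_{jk}$, which is deterministic. Using $\E X_{jk}=0$ and Markov's inequality applied to the fourth moment,
\begin{equation*}
|\D_{jk}|=\bigl|\E X_{jk}\one[|X_{jk}|>\sqrt{n}/\overline R]\bigr|\le \frac{\E|X_{jk}|^{4}}{(\sqrt{n}/\overline R)^{3}}\le \frac{\beta_4\overline R^{3}}{n^{3/2}}.
\end{equation*}
Summing over the $n^{2}$ entries yields $\|\D\|_{2}\le \beta_4\overline R^{3}n^{-1/2}$.

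Second, the resolvent identity gives $\hat\RR-\tilde\RR = n^{-1/2}\hat\RR\,\D\,\tilde\RR$, and taking the normalised trace together with cyclicity,
\begin{equation*}
\hat m_n(z)-\tilde m_n(z)=\frac{1}{n^{3/2}}\Tr\bigl(\D\,\tilde\RR\,\hat\RR\bigr).
\end{equation*}
Cauchy--Schwarz in the Hilbert--Schmidt inner product, followed by the sub-multiplicative inequality and the standard identity $\|\tilde\RR\|_{2}^{2}=n\imag\tilde m_n(z)/v$, produces
\begin{equation*}
|\Tr(\D\,\tilde\RR\hat\RR)|\le \|\D\|_{2}\,\|\tilde\RR\|_{2}\,\|\hat\RR\|\le \|\D\|_{2}\cdot v^{-1}\sqrt{n\imag\tilde m_n(z)/v},
\end{equation*}
hence the deterministic bound
\begin{equation*}
|\hat m_n(z)-\tilde m_n(z)|\le C\,\overline R^{3}(nv)^{-3/2}\sqrt{\imag\tilde m_n(z)}.
\end{equation*}

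Third, raising to the $p$-th power and taking expectation,
\begin{equation*}
\E|\hat m_n(z)-\tilde m_n(z)|^{p}\le (C\overline R^{3})^{p}(nv)^{-3p/2}\,\E(\imag\tilde m_n(z))^{p/2},
\end{equation*}
so it is enough to show $\E(\imag\tilde m_n(z))^{p/2}\le C^{p}$ for $z\in\mathbb D_{2}$. I would use the identity $\tilde m_n(z)=\sigma^{-1}\breve m_n(\sigma^{-1}z)$ recorded in the proof of Lemma~\ref{appendix: lemma trunc 0}, together with Jensen's inequality $(\imag\breve m_n)^{p/2}\le n^{-1}\sum_{j}(\imag\breve\RR_{jj})^{p/2}$ (valid since $t\mapsto t^{p/2}$ is convex for $p\ge 2$), and then invoke Lemma~\ref{main lemma}\eqref{eq: main lemma third statement} applied to $\breve\W$, which satisfies $\CondTwo$. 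Since $|\sigma^{-1}-1|=O(\overline R^{2}/n)$ the shifted argument $\sigma^{-1}z$ remains in $\mathbb D_{2}$, and in the admissible range $p\le A_{1}\log n$, $v\ge A_{0}n^{-1}\log^{2}n$, the auxiliary term $p^{p}/(nv)^{p}$ in Lemma~\ref{main lemma} is uniformly $O(1)$, giving $\E(\imag\tilde m_n)^{p/2}\le C^{p}$.

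The principal obstacle is this last step: because $\tilde\W$ is not itself a unit-variance Wigner matrix, the control on $\imag\tilde m_n$ cannot be obtained from Lemma~\ref{main lemma} directly and must be routed through $\breve\W$ via the rescaling identity; one also needs to keep track of the mild shift $\sigma^{-1}z$ when applying the local law bounds. The rest is algebraic manipulation of the resolvent identity.
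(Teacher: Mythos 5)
Your proposal is correct and follows essentially the same route as the paper: the same trace/resolvent identity $\tilde m_n-\hat m_n=n^{-1}\Tr(\tilde\W-\hat\W)\hat\RR\tilde\RR$, the same fourth-moment bound $|\E X_{jk}\one[|X_{jk}|\ge\sqrt n/\overline R]|\le C\overline R^3 n^{-3/2}$, and the same Hilbert--Schmidt/operator-norm splitting with $\|\tilde\RR\|_2^2=n\imag\tilde m_n/v$ and $\|\hat\RR\|\le v^{-1}$. Your control of $\E(\imag\tilde m_n)^{p/2}$ through the rescaling $\tilde m_n(z)=\sigma^{-1}\breve m_n(\sigma^{-1}z)$ and Lemma~\ref{main lemma} is the same mechanism the paper invokes via Lemma~\ref{appendix: lemma trunc 0}, so there is no substantive difference.
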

\begin{proof}
It is easy to see that
\begin{eqnarray*}
\tilde m_n(z) - \hat m_n(z)  = \frac{1}{n} \Tr (\tilde \W - \hat \W) \hat \RR \tilde \RR.
\end{eqnarray*}
Applying the obvious inequalities \(|\Tr \A \B| \le \|\A\|_2 \| \B\|_2\) and \(\|\A \B\|_2 \le \|\A\|\|\B\|_2\) we get
\begin{eqnarray*}
|\tilde m_n(z) - \hat m_n(z)|  \le \|\tilde \W - \hat \W\|_2 \|\hat \RR\|_2 \|\tilde \RR\| = \|\E \hat \W\|_2 \|\tilde \RR\|_2 \|\hat \RR\|.
\end{eqnarray*}
From
\begin{eqnarray*}
|\E \hat X_{jk}| = |\E X_{jk} \one[|X_{jk}| \geq \sqrt n/\overline R]| \le \frac{C \overline R^3}{n^{3/2}}
\end{eqnarray*}
we obtain
\begin{eqnarray*}
\|\E \hat \W\|_2 \le \frac{C \overline R^3}{n}.
\end{eqnarray*}
By Lemma~\ref{appendix: lemma trunc 0} we know \(\E |\tilde m_n(z)|^p \le C^p\). This implies that
\begin{eqnarray*}
\frac{1}{n^\frac{p}{2}} \E\|\tilde \RR\|_2^p \le \frac{C^p}{v^\frac{p}{2}}.
\end{eqnarray*}
Finally
\begin{eqnarray*}
\E|\tilde m_n(z) - \hat m_n(z)|^p \le\frac{C^p \overline R^{3p} }{(nv)^\frac{3p}{2}}.
\end{eqnarray*}
\end{proof}

\subsection{Replacement}

We say that the conditions $\CondThree$ are satisfied if $X_{jk}$ satisfies the conditions $\Cond$ and have a sub-Gaussian distribution. It is well-known that the random variables $\xi$ are sub-gaussian if and only if $\E^{1/p} |\xi|^p  \le C \sqrt{p}$ for some constant \(C>0\). 
\begin{lemma}\label{lem: bound in the optimal region}
For all \(v \geq v_0\)  and \(5 \le p \le \log n\) there exist positive constants \(C_1, C_2\) such that
\begin{eqnarray*}
\E|\G_{jk}(v)|^p \le C_1^p + C_2\E|\G_{jk}^\y(v)|^p,
\end{eqnarray*}
where \(G_{jk}^\y\) is defined in~\eqref{sub gaussian matrix}. 
\end{lemma}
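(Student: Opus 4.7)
We proceed by a Lindeberg-type telescopic replacement that swaps the $\xi_{jk}$-entries (those with $L_{jk}=1$) one at a time into their bounded moment-matched analogues $\overline\xi_{jk}$. Enumerate the pairs $(j_m,k_m)$, $1\le j_m\le k_m\le n$, with $L_{j_mk_m}=1$ as $m=1,\dots,N$, $N\le n(n+1)/2$, and form the interpolating sequence $\HH^{(0)}=\HH,\HH^{(1)},\dots,\HH^{(N)}=\HH^\y$ in which $\HH^{(m)}$ agrees with $\HH^{(m-1)}$ except that the symmetric pair of entries at position $(j_m,k_m)$ has been replaced by $\overline\xi_{j_mk_m}$. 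Writing
\[
\E|\G_{jk}|^p - \E|\G^\y_{jk}|^p = \sum_{m=1}^N\bigl(\E|\G^{(m-1)}_{jk}|^p - \E|\G^{(m)}_{jk}|^p\bigr),
\]
it suffices to bound the accumulated error by $C_1^p$.

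\textbf{Taylor expansion and cancellation.} For fixed $m$, view the entry at $(j_m,k_m)$ as a free real parameter $x$ and let $F_m(x):=|\G^{(m-1,x)}_{jk}|^p$, where $\G^{(m-1,x)}$ is the resolvent of the matrix identical to $\HH^{(m-1)}$ except that the $(j_m,k_m)$-entry equals $x$. Taylor-expand $F_m$ around $0$ to order $5$; the coefficients $F_m^{(s)}(0)$ for $s=0,\dots,4$ depend only on the remaining (unswapped) entries and are thus independent of the swapped one. Since $\E\xi_{j_mk_m}^s=\E\overline\xi_{j_mk_m}^s$ for $s=1,\dots,4$, taking expectation yields
\[
\E F_m(\xi_{j_mk_m}) - \E F_m(\overline\xi_{j_mk_m}) = \tfrac{1}{120}\E\bigl[F_m^{(5)}(\theta_1)\xi_{j_mk_m}^5 - F_m^{(5)}(\theta_2)\overline\xi_{j_mk_m}^5\bigr],
\]
where $\theta_1\in[0,\xi_{j_mk_m}]$ and $\theta_2\in[0,\overline\xi_{j_mk_m}]$.

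\textbf{Controlling the fifth-order remainder.} Iterating the resolvent identity $\partial_x\G^{(m-1,x)}=-n^{-1/2}\G^{(m-1,x)}\Delta\G^{(m-1,x)}$, where $\Delta=E_{j_mk_m}+E_{k_mj_m}$, gives $|\partial_x^s\G^{(m-1,x)}_{ab}|\le C^s s!\,n^{-s/2}M(x)^{s+1}$ with $M(x):=\max_{c,d}|\G^{(m-1,x)}_{cd}|$; by the chain rule applied to $y\mapsto|y|^p$ one obtains $|F_m^{(5)}(x)|\le C^5 p^5 n^{-5/2}M(x)^{p+4}$. Since the configuration $\LL$ is unaffected by swaps at unlinked positions and freezing an independent entry to a deterministic $x$ does not destroy the independence of the remaining variables, the argument of Lemma~\ref{G_{jk} 0 step} applies verbatim to $\HH^{(m-1,x)}$ and furnishes $\E|\G^{(m-1,x)}_{ab}|^q\le C_0^q$ for every $q\le A_1\log^\alpha n$ and every fixed $x$. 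A union bound then yields $\Pb(M(x)\ge eC_0)\le n^2 e^{-q}$, which is super-polynomially small for $q=A_1\log^\alpha n$. Splitting the expectation on this event and using the crude deterministic bound $M(x)\le v^{-1}$ on the small exceptional set, together with $\E|\xi_{j_mk_m}|^5\le n^{1/4}\underline R\cdot\beta_4$ and $\E|\overline\xi_{j_mk_m}|^5\le D^5\beta_4$, one obtains per-swap control of the form $C^p p^5 n^{-9/4}\log n + n^{-A}$ for any $A>0$. Summing over $m\le n^2/2$ gives $\bigl|\E|\G_{jk}|^p-\E|\G^\y_{jk}|^p\bigr|\le C^p p^5 n^{-1/4}\log n$, which for $p\le A_1\log^\alpha n$ with $A_1$ small is at most $C_1^p$, yielding the claim with $C_2=1$.

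\textbf{Main obstacle.} The principal difficulty is the uniform control of $M(x)$ across the interpolation range $[0,n^{1/4}\underline R]$: one must ensure that Lemma~\ref{G_{jk} 0 step}, which is stated for the random matrix $\HH$, transfers to each frozen-entry matrix $\HH^{(m-1,x)}$ with losses that stay absorbable in the $n^{-1/4}$ gain of the Lindeberg scheme. This is delicate because $x$ can be as large as $n^{1/4}\log n$, and a direct resolvent perturbation argument is ineffective (the relevant Neumann series diverges on the natural scale of $v$); the resolution is to repeat the recursive descent of Lemma~\ref{lemma step for resolvent} conditionally on the swapped entry equal to $x$, exploiting the fact that $x$ enters only through a rank-two perturbation while all moment estimates on $\varepsilon_j^{(\J)}$ used in that lemma involve only the \emph{other} entries, whose distribution is unchanged.
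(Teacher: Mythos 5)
Your overall route---a telescopic one-entry-at-a-time replacement exploiting the four matching moments, with the error pushed into a Taylor remainder---is the same family of argument as the paper's, but your execution has a genuine gap at the remainder step. Because you truncate the expansion at order five, the remainder carries the factor \(M(x)^{p+4}\), so you need, uniformly over the frozen value \(x\) and over all intermediate matrices of the interpolation, clean moment bounds \(\E|\G^{(m-1,x)}_{ab}|^{q}\le C_0^{q}\) with \(q\asymp p\le \log n\) for all \(v\ge v_0\). You claim these come ``verbatim'' from Lemma~\ref{G_{jk} 0 step}, but that lemma only covers \(q\le A_1(nv)^{1/4}/\overline R\), which at \(v\asymp v_0\) is \(O(1)\); its multiplicative descent keeps \(q\asymp\log^\alpha n\) only down to \(\tilde v_0=A_0n^{-1}\log^{4(\alpha+1)}n\), and in the remaining window \(v_0\le v\le\tilde v_0\) the only available estimate is the lossy one \(C_0^q\log^{(3\alpha-1)q}n\). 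That loss is fatal to your bookkeeping: \(\log^{cq}n=n^{c\log\log n}\) for \(q\asymp\log n\) swamps the total \(n^{-1/4}\) gain of your Lindeberg scheme, and the crude fallback \(M\le v^{-1}\) is hopeless since \(v^{-(p+4)}\) can be of order \(n^{p+4}\). Assuming the clean bound on \([v_0,\tilde v_0]\) is essentially circular: removing exactly this logarithmic loss there is what the comparison with \(\HH^\y\) (whose descent, Lemma~\ref{lemma step for resolvent subgauss}, allows \(q\le A_1 nv\)) is designed for, i.e.\ it is the content of the lemma you are proving. Two secondary problems: the proof of Lemma~\ref{lemma step for resolvent} uses \(|\E h_{jk}|\le \mu_4\underline R^{-3}n^{-3/4}\) for unlinked entries, whereas your frozen entry is a deterministic value as large as \(n^{1/4}\underline R\), so the row containing it cannot be treated ``verbatim''; and the mean-value form of the remainder entangles \(\theta_1\) with \(\xi_{j_mk_m}\), so you would at least need the integral form of the remainder plus Fubini to decouple them.

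The paper's proof is organized precisely to avoid any a priori high-moment control of intermediate resolvents near \(v_0\). It expands the resolvent around \(\RT\), the matrix with the swapped entry set to zero, applies the Stein-type function \(\varphi(w)=\bar w|w|^{p-2}\) to write \(\E|\G_{jk}|^p=\E\,\G_{jk}\varphi(\G_{jk})\), and shows that all contributions of order at most four collapse into a quantity \(\mathcal I(p)\) depending on \(\RT\) only, which cancels exactly between the matrix containing \(\xi_{ab}\) and the one containing \(\overline\xi_{ab}\); the higher-order and remainder terms are bounded by \(C^p/n^2+\theta\,\E|\G_{jk}|^p/n^2\), i.e.\ self-consistently in the quantity being estimated, so the \(p\)-th power never lands on a resolvent evaluated at a frozen intermediate entry. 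Per swap this yields \(\E|\G_{jk}|^p\le\rho\,\E|\RS_{jk}|^p+C_1^p/n^2\) with \(\rho=1+O(n^{-2})\), and compounding over at most \(n(n+1)/2\) swaps gives the claim. To repair your version you would have to either expand to an order growing with \(n\) (so that crude \(v^{-1}\) bounds suffice for the remainder) or restructure the estimate so that only bounded powers of the auxiliary resolvent appear while the \(p\)-th power stays on \(\G_{jk}\) itself---which is, in effect, the paper's argument.
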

\begin{proof}
The method is based on the following replacement scheme, which has been used in  recent results~\cite{ErdKnowYauYin2012},~\cite{LeeYin2014} and~\cite{GotzeNauTikh2016a}. We replace  all \(h_{ab}\) by \(\overline h_{ab}\) for \((a,b)\) such that \(L_{ab} = 1\), thus  replacing the corresponding  resolvent entries \(\G_{jk}\) by \( \G_{jk}^\y\) for every pair of \((j, k)\). Let \(\J, \K \subset \T\). Denote by \(\HH^{(\J, \K)}\) the  random matrix \(\HH\) with all entries in the positions \((\mu, \nu), \mu \in \J, \nu \in \K\) replaced by \(\overline \xi_{\mu \nu}\). Assume that we have already exchanged all entries in positions \((\mu, \nu), \mu \in \J, \nu \in \K\) and are going to replace an additional entry in the position \((a, b), a \in \T \setminus \J, b \in \T \setminus \K\) with \( L_{ab} = 1\). Without loss of generality we may assume that \(\J = \emptyset, \K = \emptyset\) (hence \(\HH^{(\J, \K)} = \HH\)) and then denote \(\V: = \HH^{(\{a\}, \{b\})}\). The following  additional notations will be needed.
\begin{eqnarray*}
\EE^{(a,b)} =
\begin{cases}
\ee_{a} \ee_b^\mathsf{T} + \ee_b \ee_a^\mathsf{T}, & 1 \le a < b \le n, \\
\ee_{a} \ee_a^\mathsf{T}, &a = b.
\end{cases}
\end{eqnarray*}
and \(\U : = \HH - \EE^{(a,b)}\), where \(\ee_j\) denotes a unit column-vector with all zeros except \(j\)-th position. In these notations we may write
\begin{eqnarray*}
\HH = \U +  \xi_{ab} \EE^{(a,b)}, \quad \V = \U + \overline \xi_{ab} \EE^{(a,b)}.
\end{eqnarray*}
Recall that \(\G: = (n^{-1/2}\HH - z \I)^{-1}\) and denote \(\RS: = (\V - z \I)^{-1}\) and \(\RT: = (\U - z \I)^{-1}\). Let us assume that we have already proved the following fact
\begin{eqnarray}\label{eq: main eq 1}
\E|\G_{jk}|^p = \mathcal I(p) + \frac{\theta_1 C^p}{n^2} + \frac{\theta_1\E|\G_{jk}|^p}{n^2},
\end{eqnarray}
where \(\mathcal I(p)\) is some quantity depending on \(p, n\) (see~\eqref{I term} below for precise definition)  and \(|\theta_1|\le 1, C > 0\) are some numbers. Similarly,
\begin{eqnarray}\label{eq: main eq 2}
\E|\RS_{jk}|^p = \mathcal I(p) + \frac{\theta_2 C^p}{n^2} + \frac{\theta_2\E|\RS_{jk}|^p}{n^2},
\end{eqnarray}
where \(|\theta_2| \le 1\). It follows from~\eqref{eq: main eq 1} and~\eqref{eq: main eq 2} that
\begin{eqnarray*}
\left(1 - \frac{\theta_1}{n^2}\right )\E|\G_{jk}|^p \le \left(1 - \frac{\theta_2}{n^2}\right )\E|\RS_{jk}|^p + \frac{2C^p}{n^2}.
\end{eqnarray*}
Let us denote \(\rho : = \left(1 - \theta_2/n^2 \right )\left(1 -\theta_1/n^2\right )^{-1}\). We get
\begin{eqnarray}\label{eq: step}
\E|\G_{jk}|^p \le \rho \E|\RS_{jk}|^p + C_1^p/n^2,
\end{eqnarray}
with some positive constant \(C_1\). Repeating~\eqref{eq: step} recursively for \((a,b): L_{ab} = 1\) we arrive at the following bound
\begin{eqnarray}\label{eq: step 2}
\E|\G_{jk}|^p \le \rho^\frac{n(n+1)}{2} \E|\G_{jk}^\y|^p + \frac{C_1^p}{n^2} \left(1 + \rho_1 + ... + \rho_1^{M - 1 }\right),
\end{eqnarray}
where \( M \le n(n+1)/2\). It is easy to see from the definition of \(\rho\) that for some \(\theta\), say \(|\theta| < 4\), we have
\begin{eqnarray*}
\rho \le 1 + |\theta|/n^2.
\end{eqnarray*}
From this inequality and~\eqref{eq: step 2} we deduce that
\begin{eqnarray*}
\E|\G_{jk}|^p \le C_2 \E|\G_{jk}^\y|^p + C_3^p,
\end{eqnarray*}
with some positive constants \(C_2\) and \(C_3\). From the last inequality we may conclude the statement of the lemma. It remains to prove~\eqref{eq: main eq 1} (resp.~\eqref{eq: main eq 2}).  Applying the resolvent equation we get for \(m \geq 0\)
\begin{eqnarray}\label{resolvent expansion R}
\G = \RT + \sum_{\mu=1}^m \frac{(-1)^\mu}{n^\frac{\mu}{2}} \xi_{ab}^\mu (\RT \EE^{(a,b)})^\mu \RT + \frac{(-1)^{m+1}}{n^\frac{m+1}2} \xi_{ab}^{m+1} (\RT \EE^{(a,b)})^{m+1} \G.
\end{eqnarray}
The same identity holds for \(\RS\)
\begin{eqnarray*}\label{resolvent expansion S}
\RS = \RT + \sum_{\mu=1}^m \frac{(-1)^\mu}{n^\frac{\mu}{2}} \overline \xi_{ab}^\mu (\RT \EE^{(a,b)})^\mu \RT + \frac{(-1)^{m+1}}{n^\frac{m+1}{2}} \overline \xi_{ab}^{m+1} (\RT \EE^{(a,b)})^{m+1} \RS.
\end{eqnarray*}
We investigate~\eqref{resolvent expansion R}. In order handle arbitrary high moments of \(\G_{jk}\) we apply a Stein type technique similar to Theorem. Let us introduce the following function \(\varphi(z): = \overline z |z|^{p-2}\) and write
\begin{eqnarray*}
\E |\G_{jk}|^p = \E \G_{jk} \varphi(\G_{jk}).
\end{eqnarray*}
Applying~\eqref{resolvent expansion R}  we get
\begin{eqnarray*}
\E |\RR_{jk}|^p &=& \sum_{\mu=0}^4 \frac{(-1)^\mu}{n^\frac{\mu}{2}} \E \xi_{ab}^\mu [(\RT \EE^{(a,b)})^\mu \RT]_{jk} \varphi(\G_{jk})
\nonumber\\
&+&\sum_{\mu=5}^m \frac{(-1)^\mu}{n^\frac{\mu}{2}} \E \xi_{ab}^\mu [(\RT \EE^{(a,b)})^\mu \RT]_{jk} \varphi(\G_{jk}) \nonumber \\
&+&  \frac{1}{n^\frac{m+1}2} \E \xi_{ab}^{m+1} [(\RT \EE^{(a,b)})^{m+1} \G]_{jk}\varphi(\G_{jk}) =: \mathcal A_0 + \mathcal A_1 + \mathcal A_2.
\end{eqnarray*}
Repeating the arguments from~\cite{GotzeNauTikh2016a} one may show that 
\begin{eqnarray*}
\max (|\mathcal A_1|, |\mathcal A_2|) \le \frac{C^p}{n^2} + \rho \frac{\E |\G_{jk}|^p}{n^2}. 
\end{eqnarray*}
For the term \( \mathcal A_{00}\) one may write down the following representation
\begin{eqnarray*}
\mathcal A_0 = \mathcal I(p) + r_n(p),
\end{eqnarray*}
with the remainder term bounded in absolute value
\begin{eqnarray*}
|r_n(p)| \le \frac{C^p}{n^2} + \rho \frac{\E |\G_{jk}|^p}{n^2},
\end{eqnarray*}
and
\begin{eqnarray}\label{I term}
\mathcal I(p) := \sum_{\mu=0}^4 \frac{(-1)^\mu}{n^\frac{\mu}{2}} \E \xi_{ab}^\mu \E [(\RT \EE^{(a,b)})^\mu \RT]_{jk} \varphi(\RT_{jk}) + \sum_{\mu=0}^4 \sum_{l=1}^{4-\mu} \frac{(-1)^\mu}{l!} \mathcal B_{\mu l}^{(0)},
\end{eqnarray}
where 
\begin{eqnarray*}
\mathcal B_{\mu l}^{(0)} &:=& \sum_{\substack{\mu_1 + ... + \mu_m = l \\ \mu+\mu_1 + 2\mu_2 + ... + m \mu_m \le 4 }} \frac{C_{\mu_1, ... , \mu_m}^l}{n^{\frac{\mu}{2} + \frac{\mu_1}{2} + \frac{2\mu_2}{2} + ... + \frac{m \mu_m}{2}}} \E X_{ab}^{\mu+\mu_1 + 2\mu_2 + ... + m \mu_m} \\
&&\qquad\qquad\times \E [(\RT \EE^{(a,b)})^\mu \RT]_{jk}[(\RT \EE^{(a,b)})\RT]_{jk}^{\mu_1}...[(\RT \EE^{(a,b)})^m\RT]_{jk}^{\mu_m} \varphi^{(l)}(\RT_{jk}).
\end{eqnarray*}
One may see that the term \(\mathcal I(p)\) doesn't depend on \(\G\) but depends on \( \RT \).
\end{proof}

\begin{lemma}\label{lemma step for resolvent subgauss} Let \(\LL\) be \(r\)-admissible and assume that the conditions \(\CondThree\) hold. Let \(C_0\) and \(s_0\) be arbitrary numbers such that \(C_0 \geq \max(1/V, 6 c_0), s_0 \geq 2\). There exist a sufficiently large constant \(A_0\) and small constant \(A_1\) depending on \(C_0, s_0, V\) only such that the following statement holds. Fix some \(\tilde v: \tilde v_0 s_0 \leq \tilde v \le V\). Suppose that for some integer \(L > 0\), all \(u, v',q\) such that  \(\tilde v \leq v' \leq V,\, |u| \le u_0, 1 \le q \le A_1 (n v')\)
\begin{eqnarray*}
\max_{\J: |\J| \le L} \max_{l, k \in \T_\J}\E |\G_{l k}^{(\J)}(v')|^q \le C_0^q.
\end{eqnarray*}
Then for all \(u,v, q\) such that \(\tilde v/s_0 \leq v \le V, |u| \le u_0\), \(1 \le q \le A_1 (n v)\)
\begin{eqnarray*}
\max_{\J: |\J| \le L-1} \max_{l, k \in \T_\J}\E |\G_{l k}^{(\J)}(v)|^q \le C_0^q.
\end{eqnarray*}
\end{lemma}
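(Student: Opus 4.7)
The plan is to mirror the proof of Lemma~\ref{lemma step for resolvent} line by line, with only the moment estimates changed. Specifically, I will reuse the recurrence of Lemma~\ref{lemma: recurence relation for R_jj} to reduce everything to bounds on $\E|\varepsilon_j^{(\J)}|^{2q}$ and $\E|\G_{jk}^{(\J,j)}|^q$, reuse the ``descent'' inequalities \eqref{G diag est}--\eqref{ G off diag est} coming from $\G(v) \le s_0\G(s_0 v)$ together with the inductive hypothesis at scale $\tilde v$, and reuse the decomposition \eqref{decomposition 1} of $\varepsilon_{2j}^{(\J)}$ and the analogous split of $\varepsilon_{3j}^{(\J)}$. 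The crucial replacement is that under $\CondThree$ the truncated entries $\overline\xi_{jk}$ are sub-Gaussian, so that $\E^{1/q}|\overline\xi_{jk}|^q \le C\sqrt q$ for \emph{every} $q\ge 1$; this eliminates the factors $\underline R^{2q-4}$ and $\underline R^{4q-4}$ that forced the constraint $q\le A_1(nv)^{1/4}/\underline R$ in Lemma~\ref{lemma step for resolvent} and permits $q$ to range up to $A_1(nv)$.

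Concretely, I would carry out the following steps. First, for each sub-block of terms $\zeta_{2j}, \zeta_{3j}, \widehat\zeta_{2j}, \overline\zeta_{2j}$ involving only $\overline\xi_{jk}$ (i.e.\ indices with $L_{jk}=1$), apply Burkholder's / Rosenthal's martingale moment inequality as in \cite{GotzeNauTikh2015a}[Theorem A.1--A.2], using the sub-Gaussian bound for $\overline\xi_{jk}$ together with \eqref{G diag est}--\eqref{ G off diag est}. This yields estimates of the form
\begin{equation*}
\E|\zeta_{2j}|^{2q} \le \frac{(Cq)^q (s_0 C_0)^q}{(nv)^q} + \frac{(Cq)^{2q}(s_0 C_0)^{2q}}{n^q},
\end{equation*}
and similarly for the other terms, all of which stay $\le C^{2q}(s_0C_0)^{2q}$ in the range $q\le A_1(nv)$ provided $A_1$ is small enough. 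Second, for the ``block'' terms $\zeta_{1j},\zeta_{5j},\zeta_{6j},\widehat\zeta_{1j},\overline\zeta_{1j}$ that involve the large entries $\eta_{jk}$ with $k\in\mathbb A_j$, I retain the crude bounds from the proof of Lemma~\ref{lemma step for resolvent}: since $\LL$ is $r$-admissible we have $|\mathbb A_j|\le r=\log^3 n$, and since $\overline R\ge\log^3 n$ we have $|\eta_{jk}|\le\sqrt n/\overline R$, so each such term is dominated by a factor of the form $(r/\overline R)^{cq} C^q(s_0C_0)^{cq}\le C^q(s_0C_0)^{cq}$. Combining all terms in $\varepsilon_j^{(\J)}$ and plugging into the Schlein recurrence then gives $\E|\G_{jj}^{(\J)}(v)|^q\le C_0^q$, and the off-diagonal bound follows from the standard resolvent identity $\G_{jk}^{(\J)}=-n^{-1/2}\G_{jj}^{(\J)}\sum_{l\in\T_{\J,j}}h_{jl}\G_{lk}^{(\J,j)}$ treated by the same split.

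The main obstacle is simply to verify that, with the enlarged range $q\le A_1(nv)$, every term in the decompositions still stays $\le C^q(s_0C_0)^{cq}$; this is where the sub-Gaussian improvement is decisive, because the dominant term in Rosenthal now carries only a factor $(Cq)^q/(nv)^q$ (rather than a factor proportional to $\beta_{2q}/\underline R$ as in the $\CondTwo$ case), which is $\le 1$ precisely in the new range of $q$. Once that is verified, the final choice of $A_0$ large and $A_1$ small depending only on $C_0,s_0,V$ is made exactly as in Lemma~\ref{lemma step for resolvent}, completing the inductive step from scale $\tilde v$ to $\tilde v/s_0$.
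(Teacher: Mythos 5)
Your skeleton (repeat Lemma~\ref{lemma step for resolvent}: the recurrence of Lemma~\ref{lemma: recurence relation for R_jj}, the descent bounds \eqref{G diag est}--\eqref{ G off diag est}, crude bounds for the block terms involving $\eta_{jk}$, improved bounds only for the terms built from $\overline\xi_{jk}$) is indeed how the paper argues. The gap is in the tool you choose for those sub-Gaussian terms. You propose Rosenthal/Burkholder moment inequalities as in \cite{GotzeNauTikh2015a}[Theorems A.1--A.2], fed with the moment growth $\E^{1/q}|\overline\xi_{jk}|^q\le C\sqrt q$. Those inequalities produce, for the quadratic form $\zeta_{2j}$, terms of the shape $\beta_{2q}\,q^{3q}(n^{2}v)^{-q}(\cdots)$ and $\beta_{2q}^2\,q^{4q}n^{-2q}\sum_{k,l}\E|\G_{kl}^{(\J,j)}|^{2q}$; even with $\beta_{2q}\le C^q q^q$ these are of order $(Cq^4/(n^2v))^q$ and $n^2(Cq^3/n)^{2q}$, so they are under control only for $q$ up to a small power of $n$ (roughly $n^{1/3}$), while the lemma needs $q$ up to $A_1(nv)$, i.e.\ of order $n$ when $v\asymp V$. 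Moreover, the intermediate bound you assert, $\E|\zeta_{2j}|^{2q}\le (Cq)^q(s_0C_0)^q(nv)^{-q}+(Cq)^{2q}(s_0C_0)^{2q}n^{-q}$, does not come out of a Rosenthal-type argument, and even granting it, its second term equals $(C^2q^2/n)^q(s_0C_0)^{2q}$, which for $q\asymp A_1 nv$ with $v$ of constant order grows like $(cA_1^2nv^2)^q\to\infty$; so your claim that every term stays $\le C^{2q}(s_0C_0)^{2q}$ throughout the range $q\le A_1(nv)$ fails no matter how small $A_1$ is.

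The missing idea, and the whole point of passing to the bounded variables $\overline\xi_{jk}$, is to use exponential concentration rather than moment inequalities: the paper applies the Hanson--Wright inequality \cite{RudelsonVershynin2013} (conditionally on $\G^{(\J,j)}$) to the quadratic form, obtaining
\begin{eqnarray*}
\E|\zeta_{2j}|^{2q}\le \frac{C^q q^q\,\E\imag^q m_n^{(\J,j)}}{(nv)^q}+\frac{C^q q^{2q}}{(nv)^{2q}},
\end{eqnarray*}
with no $\beta_{2q}$ factor and with every power of $q$ matched by a power of $(nv)^{-1}$; both terms are $\le (CA_1)^q(s_0C_0)^q$ in the whole range $q\le A_1(nv)$, which is exactly what upgrades the admissible range from $A_1(nv)^{1/4}/\underline R$ to $A_1(nv)$. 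The linear and diagonal terms $\overline\zeta_{2j},\widehat\zeta_{2j},\zeta_{3j}$ are handled by the analogous sub-Gaussian (Hoeffding/Bernstein-type) concentration, again yielding only factors $q^q(nv)^{-q}$ and $q^{2q}(nv)^{-2q}$ (or $q^q n^{-q}$). If you substitute this step for your Rosenthal step, keeping your treatment of the $\eta$-block terms and the off-diagonal identity, the rest of your induction goes through as you describe.
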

\begin{proof}
We first observe the fact that the factor \(q\) appears only in the terms with \(\overline \xi_{jk}\). Let us consider only one term, for example, \( \):
\begin{eqnarray*}
\zeta_{2j}:= -\frac{2}{n}\sum_{l \ne k \in \T_{\J,j} \setminus \mathbb A_j}  (\overline \xi_{jl} - \E \overline \xi_{jl}) (\overline \xi_{jk} - \E \overline \xi_{jk}) \G_{kl}^{(\J, j)}.
\end{eqnarray*} 
Applying the Hanson-Wright inequality, see e.g.~\cite{RudelsonVershynin2013} we obtain that
\begin{eqnarray*}
\E |\zeta_{2j}|^{2q} \le \frac{C^q q^q \E \imag^q m_n^{(\J,j)}}{(nv)^q} + \frac{C^q q^{2q}}{(nv)^{2q}} \le \frac{C^q q^q (C_0 s_0)^q }{(nv)^q}.
\end{eqnarray*}
\end{proof}

\subsection{Inequalities for resolvent}
\begin{lemma}\label{appendix lemma resolvent inequalities 1}
For any \(z = u + i v \in \C^{+}\) we have
\begin{eqnarray}\label{appendix lemma resolvent inequality 1}
\frac{1}{n} \sum_{l,k \in \T_{\J}} |\RR_{kl}^{(\J)}|^2 \le \frac{1}{v} \imag m_n^{(\J)}(z).
\end{eqnarray}
For any \(l \in \T_{\J}\)
\begin{eqnarray}\label{appendix lemma resolvent inequality 2}
\sum_{k \in \T_{\J}} |\RR_{kl}^{(\J)}|^2 \le \frac{1}{v} \imag \RR_{ll}^{(\J)}.
\end{eqnarray}
\end{lemma}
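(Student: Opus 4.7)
The plan is to use the spectral decomposition of the Hermitian submatrix $\W^{(\J)}$. Write $\W^{(\J)} = \sum_{\alpha} \lambda_\alpha^{(\J)} \uu_\alpha^{(\J)} (\uu_\alpha^{(\J)})^{*}$, where $\{\uu_\alpha^{(\J)}\}$ is an orthonormal system of eigenvectors with components $u_{\alpha k}^{(\J)}$, $k \in \T_{\J}$. Then the resolvent has the representation
\[
\RR_{kl}^{(\J)}(z) \;=\; \sum_{\alpha} \frac{u_{\alpha k}^{(\J)} \overline{u_{\alpha l}^{(\J)}}}{\lambda_\alpha^{(\J)} - z}.
\]

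First I would establish \eqref{appendix lemma resolvent inequality 2}. Fix $l \in \T_{\J}$. Using orthonormality of the eigenvectors in the $k$-variable, I compute
\[
\sum_{k \in \T_{\J}} |\RR_{kl}^{(\J)}|^2 \;=\; \bigl(\RR^{(\J)} (\RR^{(\J)})^{*}\bigr)_{ll} \;=\; \sum_{\alpha} \frac{|u_{\alpha l}^{(\J)}|^2}{|\lambda_\alpha^{(\J)} - z|^2}.
\]
On the other hand, taking imaginary parts in the spectral representation of $\RR_{ll}^{(\J)}$ gives
\[
\imag \RR_{ll}^{(\J)}(z) \;=\; \sum_{\alpha} \frac{v\, |u_{\alpha l}^{(\J)}|^2}{|\lambda_\alpha^{(\J)} - z|^2}.
\]
Dividing by $v$ and comparing yields exactly \eqref{appendix lemma resolvent inequality 2}, in fact as an equality.

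For \eqref{appendix lemma resolvent inequality 1}, I would simply sum \eqref{appendix lemma resolvent inequality 2} over $l \in \T_{\J}$ and divide by $n$; the right-hand side becomes $\tfrac{1}{nv}\sum_{l \in \T_{\J}} \imag \RR_{ll}^{(\J)} \le \tfrac{1}{v}\imag m_n^{(\J)}(z)$ (with equality when $|\J|=0$, and an upper bound in general since $m_n^{(\J)}$ is defined using normalization by $n$, not by $|\T_\J|$). There is no real obstacle here—both statements are algebraic identities coming from the spectral theorem, and the only thing to be careful about is the normalization convention for $m_n^{(\J)}$, namely that $m_n^{(\J)}(z) = n^{-1} \Tr \RR^{(\J)}(z)$ rather than $|\T_{\J}|^{-1} \Tr \RR^{(\J)}(z)$, which is what allows the factor $1/n$ to appear on the left of \eqref{appendix lemma resolvent inequality 1}.
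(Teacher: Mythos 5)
Your argument is correct, and it is the standard one: the paper states this lemma in the appendix without proof (it is the usual ``Ward identity'' for resolvents), and the spectral decomposition of $\W^{(\J)}$ is exactly how one proves it --- \eqref{appendix lemma resolvent inequality 2} is indeed an identity, and \eqref{appendix lemma resolvent inequality 1} follows by summing over $l$ and using $m_n^{(\J)}(z)=n^{-1}\Tr\RR^{(\J)}(z)$. One small slip in your parenthetical: with the paper's normalization by $n$, summing \eqref{appendix lemma resolvent inequality 2} over $l\in\T_\J$ and dividing by $n$ gives \emph{equality} with $\tfrac{1}{v}\imag m_n^{(\J)}(z)$ for every $\J$; the strict-inequality scenario you describe would instead occur if $m_n^{(\J)}$ were normalized by $|\T_\J|$, in which case $\tfrac{1}{nv}\imag\Tr\RR^{(\J)}=\tfrac{|\T_\J|}{nv}\imag m_n^{(\J)}\le\tfrac{1}{v}\imag m_n^{(\J)}$. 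Either way the stated bound holds, so this does not affect the validity of the proof.
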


\def\polhk#1{\setbox0=\hbox{#1}{\ooalign{\hidewidth
			\lower1.5ex\hbox{`}\hidewidth\crcr\unhbox0}}}

\end{document}